\documentclass[11pt]{article}
\usepackage{CJK}
\usepackage{amsfonts}
\usepackage{amsmath}
\usepackage{mathrsfs}
\usepackage{mathrsfs,amscd,amssymb,amsthm,amsmath,url,bm}
\usepackage{color}
\usepackage{mathrsfs,amscd,amssymb,amsthm,amsmath,bm,graphicx,psfrag,subfigure,url}

\setlength{\evensidemargin}{-2.5cm} \setlength{\oddsidemargin}{-5mm}
\setlength{\textwidth}{17.3cm} \setlength{\textheight}{23cm}
\setlength{\headsep}{1.4mm}

\makeatletter

\renewcommand{\@seccntformat}[1]{{\csname the#1\endcsname}{\normalsize .}\hspace{.5em}}
\makeatother

\usepackage{indentfirst}

\def \[{\begin{equation}}
\def \]{\end{equation}}

\newtheorem{thm}{Theorem}[section]

\newtheorem{fact}{Fact}
\newtheorem{lem}[thm]{Lemma}
\newtheorem{cor}[thm]{Corollary}

\newtheorem{claim}{Claim}

\newenvironment{wst}
{\setlength{\leftmargini}{1.5\parindent}
 \begin{itemize}
 \setlength{\itemsep}{-1.1mm}}
{\end{itemize}}

\begin{document}
\baselineskip=0.23in
\begin{center}{\Large \bf Matching extension and matching exclusion via the size or the spectral radius of graphs\footnote{Financially supported by the National Natural Science Foundation of China (Grant Nos. 12171190, 11671164)}}
\vspace{4mm}

{\large Shujing Miao$^1$,\ \ Shuchao Li$^1$,\ \ Wei Wei$^{2,}$}\footnote{Corresponding author. \\
\hspace*{5mm}{\it Email addresses}: sjmiao2020@sina.com (S. Miao),\, li@ccnu.edu.cn (S. Li),\, weiweimath@sina.com (W. Wei)} \vspace{2mm}

$^1$Hubei Key Laboratory of Mathematical Science, and Faculty of Mathematics and Statistics,\\  Central China Normal
University, Wuhan 430079, PR China

$^2$Center of Intelligent Computing and Applied Statistics, School of Mathematics, Physics and Statistics, Shanghai University of Engineering Science, Shanghai 201620, China

\end{center}


\noindent {\bf Abstract}: \ A graph $G$ is said to be $k$-extendable if every matching of size $k$ in $G$ can be extended to a perfect matching of $G$, where $k$ is a positive integer. We say $G$ is $1$-excludable if for every edge $e$ of $G$, there exists a perfect matching excluding $e$. In this paper, we first establish a lower bound on the size (resp. the spectral radius) of $G$ to guarantee that $G$ is $k$-extendable. Then we determine a lower bound on the size (resp. the spectral radius) of $G$ to guarantee that $G$ is $1$-excludable. All the corresponding extremal graphs are characterized.

\vspace{2mm} \noindent{\it Keywords}:\ Size; Spectral radius; Matching extension; Marching exclusion

\vspace{2mm}

\noindent{AMS subject classification:} 05C70;\, 05C50

\setcounter{section}{0}
\section{\normalsize Introduction}\setcounter{equation}{0}

In this paper, we only deal with finite and undirected graphs without loops or multiple edges. Let $G$ be a graph with vertex set $V(G)$ and edge set $E(G).$ The \textit{order} of $G$ is the number $n=| V(G)|$ of its vertices and its \textit{size} is the number $m=|E(G)|$ of its edges. For graph theoretic notation and terminology not defined here, we refer to \cite{C.G01,D.B.}.

The problem of extending a certain substructure of a graph into a spanning subgraph with the same properties was studied extensively. Let $I$ be a set of non-negative integers. A graph $G$ is called an \textit{$I$-graph} if $d_G(v)\in I$ for all $v\in V(G)$. In particular, a $\{r\}$-graph is a $r$-regular graph. An \textit{$I$-factor} is a spanning $I$-subgraph of $G$. Liu \cite{G.Z.} characterized a graph in which each edge can be extended into an $[a, b]$-factor. Let $\mathcal{H}$ be a set of connected graphs and $\mathcal{P}_{\geqslant k}=\{P_i:i\geqslant k\geqslant2\}$. An \textit{$\mathcal{H}$-factor} is a spanning subgraph, whose connected components are isomorphic to graphs from $\mathcal{H}$. Hence, a \textit{$\mathcal{P}_{\geqslant k}$-factor} means a path factor in which each component has order at least $k\geqslant2$.
Zhang and Zhou \cite{Z.Z} described a graph in which each edge can be extended into a $\mathcal{P}_{\geqslant 2}$-factor and $\mathcal{P}_{\geqslant 3}$-factor, respectively.

A set $M$ of edges in a graph is called a \textit{matching} if no two edges of $M$ share a vertex. If a matching covers all the vertices of a graph $G$, then it is called a \textit{perfect matching} of $G$. A matching $M$ is \textit{extendable} in $G$, if there exists a perfect matching of $G$ containing every edge of $M$.  Moreover, a graph with at least $2k+2$ vertices having a perfect matching is said to be $k$-\textit{extendable} if every matching of size $k$ is extendable. A graph $G$ is said to be \textit{$1$-excludable} if, for every edge $e$, $G$ has a perfect matching excluding $e$.

Matching extendability was studied extensively in relation to some important problems in graphs (see the nice survey paper \cite{D-M-2008} and books \cite{A.K,Y.L}). The research on the matching extension seems to be traced back to 1964 when Hetyei \cite{G.H} studied it for bipartite graphs. In 1975, Little, Grant and Holton \cite{C.H.C} gave a necessary and sufficient condition for a graph to be $1$-extendable. Plummer \cite{M.D.} studied the properties of $k$-extendable graphs. In the past years, there has been some focus on distance matching extension of graphs and matching extendability of graphs embedded on surfaces. For more advances along this line, we refer the readers to \cite{AFS2020,AFS2021,APR2020,FS2020} and the references cited therein.

The match exclusion also attracted much attention. Plesn\'{\i}k \cite{P1974} studied $(r-1)$-edge connected $r$-regular multigraph of even order, and determined that for any $r-1$ edges there is a perfect matching excluding these $r-1$ edges. Katerinis \cite{K1993} extended this result and showed that, for any $r-k$ edges, there is a $\{k\}$-factor excluding these $r-k$ edges. Chen and Liu \cite{Cpc} gave the condition that, for any edge of a graph, there is an $[a,b]$-factor containing this edge and another $[a,b]$-factor excluding this edge. Kano and Yu \cite{K2005} studied connected $r$-regular graph, and got a similar condition that, for any edge of a graph, there is a $\{k\}$-factor containing this edge and another $\{k\}$-factor excluding this edge.

Recently, O \cite{S.O.} described how the property of perfect matching interacts with
such other graph parameters as the size and the spectral radius, respectively. He established a sharp upper bound on the size (resp. the spectral radius) of a graph without a perfect matching. The authors \cite{LM} of the current paper studied the relationship between $\mathcal{P}_{\geqslant 2}$-factor and size (resp. spectral radius). For the study of the relationship between certain topological properties of a graph and its spectral radius, one may be referred to those in \cite{A-E,HLZ2022,M.A.,S.C.} and the references therein.

Over the past few years, many works focused on how the property of $k$-extendability interacts with
such other graph parameters as genus, toughness, claw-freedom, degree sums and generalized
neighborhood conditions (see the survey paper \cite{MDP1994}). As far as we know, only a few works in the literature describe the matching extension (or the matching exclusion) by the size, or the spectra of graphs. With this in mind, we establish the relationship between matching extension (resp. exclusion) and size (resp. spectral radius). We obtain some sufficient conditions to guarantee that a graph is $k$-extendable (resp. 1-excludable) via the size or the spectral radius of the graph.

In order to formulate our main results, we introduce several notions.
A graph $G$ is called \textit{trivial} if $|V(G)|=1.$ Let $V_1\subseteq V(G)$ and $E_1\subseteq E(G)$. Then $G-V_1, G-E_1$ are the graphs formed from $G$ by deleting the vertices in $V_1$ and their incident edges, the edges in $E_1,$ respectively. For convenience, denote $G-\{v\}$ and $G-\{uv\}$ by $G-v$ and $G-uv,$ respectively. For a given subset $S\subseteq V(G),$ the subgraph of $G$ induced by $S$ is denoted by $G[S].$

For a vertex $v\in V(G),$ let $N_G(v)$ be the set of all neighbours of $v$ in $G.$ Then $d_G(v) = |N_G(v)|$ is the \textit{degree} of $v$ in $G.$ A vertex $v$ of $G$ is called a \textit{pendant vertex} if $d_G(v)=1.$ The \textit{complement} of a graph $G$ is a graph $\overline{G}$ with the same vertex set as $G,$ in
which any two distinct vertices are adjacent if and only if they are non-adjacent in $G.$
As usual, let $K_n$ denote the complete graph on $n$ vertices. Let $K_{n-1}^+$ denote { the} graph obtained from $K_{n-1}$ by adding a pendant vertex. For two graphs $G_1$ and $G_2,$ we define $G_1\cup G_2$ as their \textit{disjoint union}. The \textit{join} $G_1\vee G_2$ is obtained from $G_1\cup G_2$ by joining every vertex of $G_1$ with every vertex of $G_2$ by an edge.

When we study the relationship between matching exclusion and parameters we only consider the connected graph with minimum degree at least $2$, which is based on the fact: If graph G contains a pendant edge $e$, then there does not exist perfect matching in $G-e$.



Our first main result gives a sufficient condition to ensure {that} a graph $G$ is $k$-extendable according to the size of $G.$
\begin{thm}\label{thm1.1}
Let $k$ be a positive integer and $n$ be an even positive integer. If $G$ is an $n$-vertex connected graph with
$|E(G)|\geqslant$\begin{math}
\left(\begin{smallmatrix}
n-1\\2
\end{smallmatrix} \right)
\end{math}$+2k$, then $G$ is $k$-extendable, unless $G\in\{K_{2k}\vee(K_{n-2k-1}\cup K_1),\ K_{2k+1}\vee3K_1\}$.
\end{thm}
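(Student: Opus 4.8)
The plan is to reduce $k$-extendability to a Tutte-type deficiency condition and then carry out an edge-counting optimization; throughout we may assume $n\geqslant 2k+2$, since otherwise $\binom{n-1}{2}+2k>\binom{n}{2}$ and the hypothesis is vacuous. First, the hypothesis $|E(G)|\geqslant\binom{n-1}{2}+2k$ together with the Erd\H{o}s--Gallai bound on the size of a graph of matching number at most $k-1$ forces $G$ to contain a matching of size $k$. Applying Tutte's $1$-factor theorem to $G-V(M)$ over all $k$-matchings $M$ then gives the standard fact: a connected graph $G$ on $n$ (even, $\geqslant 2k+2$) vertices that contains a $k$-matching is $k$-extendable if and only if $o(G-S)\leqslant|S|-2k$ for every $S\subseteq V(G)$ such that $G[S]$ contains a matching of size $k$. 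So, assuming $G$ is not $k$-extendable, there is such an $S$ with $o(G-S)\geqslant|S|-2k+1$, and since $n$ is even parity forces $o(G-S)\geqslant|S|-2k+2$; moreover $|S|\geqslant 2k$, so write $|S|=2k+j$ with $j\geqslant 0$, and then $G-S$ has at least $j+2$ odd components.

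Next, partitioning $V(G)$ into $S$ and the vertex sets of the components of $G-S$, every edge of $G$ lies inside $S$, between $S$ and a component, or inside a component, so $|E(G)|\leqslant\binom{n}{2}$ minus the number of unordered vertex pairs meeting two distinct components of $G-S$. For fixed $j$ this count is minimized exactly when $G-S$ has precisely $j+2$ components, all odd, of which $j+1$ are singletons and one has order $n-2k-2j-1$: extra or even components only raise it (merging an even component into another increases $\sum_i|C_i|^2$), and among $j+2$ odd components, replacing two of orders $p\geqslant q\geqslant 3$ by orders $p+2$ and $q-2$ strictly increases $\sum_i|C_i|^2$. Hence $|E(G)|\leqslant h(j):=\binom{n}{2}-(j+1)(n-2k-2j-1)-\binom{j+1}{2}$ for $0\leqslant j\leqslant\tfrac{n}{2}-k-1$, and equality throughout forces $G=K_{2k+j}\vee\bigl(K_{n-2k-2j-1}\cup(j+1)K_1\bigr)$.

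A direct computation shows $h$ is strictly convex in $j$, that $h(0)=\binom{n-1}{2}+2k$, and that, with $t:=\tfrac{n}{2}-k\geqslant 1$, $h\bigl(\tfrac{n}{2}-k-1\bigr)=\binom{n-1}{2}+2k+\tfrac12(t-1)(2-t)\leqslant\binom{n-1}{2}+2k$, with equality only for $t\in\{1,2\}$. By convexity $h(j)\leqslant\binom{n-1}{2}+2k$ on the whole range, so the hypothesis forces $|E(G)|=h(j)=\binom{n-1}{2}+2k$; thus $G$ is the extremal graph above, and $h(j)=\binom{n-1}{2}+2k$ is attained only for $j=0$, or for $j=\tfrac{n}{2}-k-1$ with $n\in\{2k+2,2k+4\}$. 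The case $j=0$ gives $G=K_{2k}\vee(K_{n-2k-1}\cup K_1)$ (this also covers $n=2k+2$), and the case $n=2k+4,\ j=1$ gives $G=K_{2k+1}\vee(K_1\cup 2K_1)=K_{2k+1}\vee 3K_1$. A short verification that these two graphs have exactly $\binom{n-1}{2}+2k$ edges and fail to be $k$-extendable (take $S$ to be the vertex set of the largest clique) then shows the list of exceptions is exactly as claimed.

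The delicate point is the interplay of the optimization with the equality analysis: one must show the ``one big clique plus singletons'' decomposition uniquely minimizes the number of inter-component vertex pairs among all admissible decompositions (so that equality in the size bound determines $G$ up to isomorphism), and then read off from the convexity of $h$ precisely the two sporadic extremal graphs at the boundary values $n\in\{2k+2,2k+4\}$, while ruling out any further extremal graph and checking that the two exceptions are non-isomorphic. By comparison the Tutte-type reduction, the argument that $\nu(G)\geqslant k$, and the identity $h(0)=\binom{n-1}{2}+2k$ are routine.
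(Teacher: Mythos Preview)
Your proof is correct and follows essentially the same route as the paper's: both reduce to Chen's characterization (Lemma~\ref{lem2.6}), then maximize the edge count over admissible configurations to arrive at the extremal structure $K_{2k+j}\vee(K_{n-2k-2j-1}\cup(j+1)K_1)$, and finally compare with $\binom{n-1}{2}+2k$. The only cosmetic differences are that the paper packages the optimization via a maximality-of-$G$ argument together with Lemma~\ref{lem2.8}, and in the final step writes the difference as the product $\frac{1}{2}(s-2k)(2k+2n-3s-5)$ and checks each factor, whereas you count forbidden inter-component pairs directly and invoke the convexity of your quadratic $h(j)$ to locate the equality cases.
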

The following result is a direct consequence of Theorem \ref{thm1.1}.
\begin{cor}\label{cor1.2}
Let $n$ be an even positive integer. If $G$ is an $n$-vertex connected graph with \linebreak
$|E(G)|\geqslant$\begin{math}
\left(\begin{smallmatrix}
n-1\\2
\end{smallmatrix} \right)
\end{math}$+2$, then $G$ is $1$-extendable, unless $G\in\{K_2\vee(K_{n-3}\cup K_1),\ K_3\vee3K_1\}$.
\end{cor}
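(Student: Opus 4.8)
The plan is to obtain Corollary~\ref{cor1.2} as the special case $k=1$ of Theorem~\ref{thm1.1}. First I would substitute $k=1$ into the hypothesis of Theorem~\ref{thm1.1}: the bound $\binom{n-1}{2}+2k$ becomes exactly $\binom{n-1}{2}+2$, which is precisely the edge condition in the corollary. Next I would specialize the two exceptional graphs: with $k=1$, the family $K_{2k}\vee(K_{n-2k-1}\cup K_1)$ becomes $K_2\vee(K_{n-3}\cup K_1)$, and $K_{2k+1}\vee 3K_1$ becomes $K_3\vee 3K_1$. Since these coincide verbatim with the two graphs excluded in the corollary, the statement follows immediately from Theorem~\ref{thm1.1}.

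To keep the write-up self-contained I would append a short remark verifying that the two exceptional graphs are genuinely not $1$-extendable, so that the exclusions cannot be weakened (this is already implicit in the word ``unless'' of Theorem~\ref{thm1.1}). In $K_2\vee(K_{n-3}\cup K_1)$, let $\{a,b\}$ be the vertices of the $K_2$ and $w$ the isolated vertex coming from $K_1$; then $N(w)=\{a,b\}$, so any perfect matching using the edge $ab$ leaves $w$ uncovered, whence $ab$ is a matching of size $1$ that is not extendable. Likewise, in $K_3\vee 3K_1$ with $K_3=\{a,b,c\}$ and $3K_1=\{x,y,z\}$, using the edge $ab$ forces $c$ to be matched inside $\{x,y,z\}$, leaving two mutually non-adjacent vertices of $\{x,y,z\}$, which cannot be completed to a perfect matching. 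I would also note in passing that $|E(G)|\geqslant\binom{n-1}{2}+2$ together with connectivity already forces $n\geqslant 4=2\cdot 1+2$, so $1$-extendability is well-defined throughout the stated range.

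There is essentially no genuine obstacle here: all the real work lies in Theorem~\ref{thm1.1}, and the corollary reduces to checking the arithmetic $\binom{n-1}{2}+2k=\binom{n-1}{2}+2$ and the two parameter substitutions in the exceptional families at $k=1$. The only point that needs a moment's care is confirming that both exceptional graphs remain relevant at $k=1$ (they do: $K_2\vee(K_{n-3}\cup K_1)$ for every even $n\geqslant 4$, and $K_3\vee 3K_1$ for $n=6$), which the verification above settles.
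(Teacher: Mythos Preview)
Your proposal is correct and matches the paper's approach exactly: the paper states that Corollary~\ref{cor1.2} ``is a direct consequence of Theorem~\ref{thm1.1}'' and gives no further argument, so specializing $k=1$ in the edge bound and in the two exceptional graphs is precisely what is intended. Your added verification that $K_2\vee(K_{n-3}\cup K_1)$ and $K_3\vee 3K_1$ are genuinely not $1$-extendable is a nice sanity check but goes beyond what the paper itself records.
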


Our next main result gives a sufficient condition to ensure {that} a graph $G$ is $1$-excludable according to the size of $G.$
\begin{thm}\label{thm1.3}
Let $n$ be an even positive integer and $G$ be an $n$-vertex connected graph with $\delta(G)\geqslant2$. Then $G$ is $1$-excludable if the following three conditions hold:
\begin{wst}
\item[{\rm (i)}] $|E(G)|\geqslant10$ for $n=6$, unless $G=K_2\vee(K_2\cup2K_1)$;
\item[{\rm (ii)}] $|E(G)|\geqslant19$ for $n=8$, unless $G=K_3\vee(K_2\cup3K_1)$;
\item[{\rm (iii)}] $|E(G)|\geqslant$\begin{math}
\left(\begin{smallmatrix}
n-2\\2
\end{smallmatrix} \right)
\end{math}$+3$ for $n\geqslant10$, unless $G\in \{K_4\vee(K_{2}^+\cup4K_1),\ K_1\vee(K_2\cup K_{n-3})\}$.
\end{wst}
\end{thm}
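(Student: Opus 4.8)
The plan is to argue by contradiction. Assume $G$ satisfies the relevant size bound in (i)--(iii) but is not $1$-excludable. The starting point is the elementary reformulation that $G$ is $1$-excludable if and only if $G-e$ has a perfect matching for every $e\in E(G)$, because a perfect matching of $G-e$ is exactly a perfect matching of $G$ avoiding $e$. So fix an edge $e=uv$ for which $G-e$ has no perfect matching and apply Tutte's theorem to $G-e$: there is a set $S\subseteq V(G)$ with $o\big((G-e)-S\big)\geqslant |S|+2$, where $o(\cdot)$ denotes the number of odd components. Write $s=|S|$.

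The second step extracts the structure forced by such an $S$. Since $u,v\notin S$ (otherwise $(G-e)-S=G-S$, forcing $o(G-S)\geqslant s+2$ at once), the edge $e$ lies inside one component of $G-S$. Using that deleting a single edge changes the number of odd components by $0$ or $2$, and by $2$ only when that edge is a bridge of an even component which it splits into two odd parts, one reaches a dichotomy: either (Case A) $G$ has no perfect matching, so $o(G-S)\geqslant s+2$; or (Case B) $G$ has a perfect matching, every perfect matching of $G$ uses $e$, and then necessarily $o(G-S)=s$, the component $C$ of $G-S$ containing $e$ is even, $e$ is a bridge of $C$, and $C-e$ has exactly two odd components $C_u\ni u$ and $C_v\ni v$. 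In both cases $G$ embeds into $K_s\vee H$, where $H$ is the disjoint union of the components of $G-S$, and in Case B one additionally has $|E(G[C])|\leqslant\binom{|C_u|}{2}+\binom{|C_v|}{2}+1$ because $e$ is the only edge of $C$ joining $C_u$ and $C_v$.

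The third step is an edge-count optimisation. Replacing every component of $G-S$ by a clique (and, in Case B, $C$ by two cliques joined by one edge with one side a single vertex), making $S$ a clique joined to every other vertex, and concentrating the orders of the odd parts into one of them, one obtains $|E(G)|\leqslant f(s)$ in Case A and $|E(G)|\leqslant f(s)+1$ in Case B for $s\geqslant2$, where $f(s)=\binom{s}{2}+s(n-s)+\binom{n-2s-1}{2}$; here $\delta(G)\geqslant2$ is crucial, since it forbids a singleton odd component when $s\leqslant1$ (for $s=0$ it forces both sides of the bridge to have order at least $3$; for $s=1$ only $C_u$ and $C_v$ may be single vertices), keeping those small cases strictly below the threshold. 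As $f$ is convex in $s$, its maximum over the admissible range is attained at an endpoint, and a short comparison gives $f(s)<\binom{n-2}{2}+3$ throughout, while $f(s)+1\leqslant\binom{n-2}{2}+3$ with equality only for the configuration coming from $s=1$ in Case B (namely $K_1\vee(K_2\cup K_{n-3})$) and, when $n=10$, also for the all-singletons configuration at $s=n/2-1$. The same bookkeeping against the sharper thresholds $10$ and $19$ settles $n=6$ and $n=8$. It then remains to run the equality analysis — $S$ complete and joined to all other vertices, every component of $G-S$ a clique, and the even component carrying $e$ reduced to a single edge — to see that the graphs meeting the bound and failing to be $1$-excludable are exactly those listed in (i)--(iii), and to check directly that each of them is indeed not $1$-excludable (deleting the unique edge of its displayed $K_2$ raises $o(G-S)$ to $|S|+2$, so that edge lies in every perfect matching of $G$).

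I expect the main difficulty to be the Case-B structural step: showing that a Tutte set $S$ witnessing the absence of a perfect matching in $G-e$ can always be chosen so that $o(G-S)=|S|$, $e$ is a bridge of an even component of $G-S$, and removing $e$ creates exactly two odd components — and that no other configuration is compatible with $G$ possessing a perfect matching. A secondary nuisance is that the optimisation is genuinely tight for small even $n$, so the sporadic exceptions at $n=6,8,10$ must be isolated by a careful finite check rather than absorbed into an asymptotic estimate; the convexity computation and the remaining edge counting are comparatively routine.
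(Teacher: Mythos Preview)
Your plan is correct and follows essentially the same route as the paper. The paper invokes Chen's characterization (Lemma~\ref{lem2.7}) to obtain the dichotomy ``$o(G-S)\geqslant |S|+2$'' versus ``$G-S$ has an even component with an odd-bridge and $o(G-S)\geqslant |S|$'', then chooses an edge-maximal counterexample and reduces it step by step (Facts~1--3) to the listed extremal graphs; you instead derive the same dichotomy directly from Tutte's theorem applied to $G-e$, and replace the extremal-graph reductions by the closed-form bound $|E(G)|\leqslant f(s)$ (resp.\ $f(s)+1$) together with the convexity of $f$. The computations and the equality analysis coincide with the paper's, including the special handling of $s\in\{0,1\}$ forced by $\delta(G)\geqslant 2$. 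One small point to make explicit when you write it out: in Case~B with $s\geqslant 2$ the bound $f(s)+1$ requires optimising not only over the sizes of the odd components but also over the size $\hat{n}$ of the even component carrying the bridge (the paper does this as a separate step, arriving at $\hat{n}=2$ or $n-2s-\hat{n}+1=1$); your convexity argument covers it, but it is not literally ``concentrating the odd parts''.
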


Given a graph $G$ of order $n$, the \textit{adjacency matrix} $A(G)=(a_{ij})_{n\times n}$ of $G$ is a $0$-$1$ matrix in which the entry $a_{ij}=1$ if and only if $v_i$ and $v_j$ are adjacent. The eigenvalues of the adjacency matrix $A(G)$ are also called \textit{eigenvalues} of $G$. Note that $A(G)$ is a real non-negative symmetric matrix. Hence, its eigenvalues are real, which can be arranged in nonincreasing order as $\lambda_1(G)\geqslant \cdots \geqslant \lambda_n(G).$ Note that the adjacency spectral radius (or spectral radius, for short) of $G$ is equal to $\lambda_1(G)$, written as $\rho(G)$.

Our third main result gives a sufficient condition to ensure {that} a graph $G$ is $k$-extendable according to the spectral radius of $G.$
\begin{thm}\label{thm1.4}
Let $k$ be a positive integer and $n$ be an even positive integer. If $G$ is an $n$-vertex connected graph with
$\rho(G)\geqslant\rho(K_{2k}\vee(K_{n-2k-1}\cup K_1))$, then $G$ is $k$-extendable, unless $G=K_{2k}\vee(K_{n-2k-1}\cup K_1)$.
\end{thm}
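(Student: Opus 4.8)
The plan is to argue by contradiction. Put $H:=K_{2k}\vee(K_{n-2k-1}\cup K_1)$ and suppose $G$ is a connected $n$-vertex graph with $\rho(G)\geqslant\rho(H)$ that is not $k$-extendable and is not isomorphic to $H$; I aim for a contradiction. \textbf{Step 1 (structural reduction).} Since $G$ is not $k$-extendable, some $k$-matching $M$ cannot be extended to a perfect matching; writing $W=V(M)$, we get $|W|=2k$, the graph $G[W]$ has a perfect matching, and $G-W$ has none. By Tutte's theorem there is a set $S\subseteq V(G)\setminus W$ such that $(G-W)-S$ has more than $|S|$ odd components, and since $n-2k$ is even a parity count improves this to at least $|S|+2$ odd components. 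Replacing each component of $(G-W)-S$ by a complete graph, making $W\cup S$ a complete graph completely joined to the rest of $G$, and amalgamating all but $|S|+1$ of the odd components together with all even components into a single clique — which still has odd order, by parity — one sees that $G$ is a spanning subgraph of $G':=K_{2k+s}\vee(K_{n_1}\cup\cdots\cup K_{n_{s+2}})$ for some integer $s\geqslant0$ and odd integers $n_1\geqslant\cdots\geqslant n_{s+2}\geqslant1$ with $2k+s+\sum_i n_i=n$; this is precisely the structural reduction underlying Theorem~\ref{thm1.1}, and $G'$ is connected since $k\geqslant1$.

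\textbf{Step 2 (the extremal member of the family).} Here I would show $\rho(G')\leqslant\rho(H)$, with equality only if $G'\cong H$. For fixed $s$, if two of the cliques $K_{n_i},K_{n_j}$ have order at least $2$, replace them by $K_{n_i+n_j-1}$ and $K_1$: with $x$ the positive Perron eigenvector of $K_{2k+s}\vee(K_{n_1}\cup\cdots)$, which is constant on each clique by symmetry, comparing the quadratic forms $x^{\top}A(\cdot)x$ before and after shows the value increases by $2(n_j-1)\,x_j(n_ix_i-x_j)>0$ (where $x_i\geqslant x_j$ are the values of $x$ on $K_{n_i},K_{n_j}$), so $\rho$ strictly increases. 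Iterating, $\rho(G')\leqslant\rho(G_s)$, where $G_s:=K_{2k+s}\vee(K_{n-2k-2s-1}\cup(s+1)K_1)$, with equality iff $G'=G_s$, and it remains to prove $\rho(G_0)>\rho(G_s)$ for all $s\geqslant1$. The partition of $G_s$ into its dominating clique, its large clique and its isolated vertices is equitable, so $\rho(G_s)$ is the largest eigenvalue of the quotient matrix $\left(\begin{smallmatrix}2k+s-1 & n-2k-2s-1 & s+1\\ 2k+s & n-2k-2s-2 & 0\\ 2k+s & 0 & 0\end{smallmatrix}\right)$, with characteristic polynomial an explicit monic cubic $f_s(x)$. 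Using the bound $\rho(G_s)\geqslant n-s-2$ (since $G_s$ contains a clique on $n-s-1$ vertices) and the fact that $f_0-f_s$ is a quadratic, I would check $f_0(\rho(G_s))=(f_0-f_s)(\rho(G_s))<0$, which forces $\rho(G_0)>\rho(G_s)$ since $f_0$ is monic with largest root $\rho(G_0)$. This comparison is the crux of the whole proof: the cubics carry three parameters $n,k,s$, so the algebra must be organized carefully, and I expect the boundary cases to need separate treatment — in particular $n=2k+4$, where $G_1=K_{2k+1}\vee3K_1$ is exactly the second exceptional graph of Theorem~\ref{thm1.1}, and the degenerate case $n=2k+2s+2$, in which the large clique of $G_s$ collapses to a single vertex.

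\textbf{Step 3 (conclusion).} Since $G$ is a connected spanning subgraph of the connected graph $G'$, we have $\rho(G)\leqslant\rho(G')$ with equality only if $G=G'$. Combining this with Step~2 and the hypothesis $\rho(G)\geqslant\rho(H)$ yields $\rho(H)\leqslant\rho(G)\leqslant\rho(G')\leqslant\rho(H)$, so equality holds throughout; hence $G=G'$ and $G'\cong H$, i.e.\ $G\cong H$, contradicting $G\not\cong H$. This completes the plan.
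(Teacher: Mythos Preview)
Your plan is correct and follows essentially the paper's proof: reduce via Tutte/Chen to $G_s=K_{2k+s}\vee(K_{n-2k-2s-1}\cup(s+1)K_1)$ (this is Lemma~\ref{lem2.8}) and then compare the cubic characteristic polynomials of the equitable quotients, treating the small case $s=1,\ n=2k+4$ and the degenerate case $n=2k+2s+2$ separately, exactly as you anticipate. Two small remarks. First, your Step~1 tacitly assumes a $k$-matching exists; the paper secures this by first invoking Lemma~\ref{lem2.10} to show that the spectral hypothesis forces $G$ to have a perfect matching, and you should do the same. Second, for $s\geqslant2$ the paper reverses your comparison---it shows $f_s(\rho(H))>0$ rather than $f_0(\rho(G_s))<0$---which buys the uniform lower bound $\rho(H)>n-2$ but costs an extra Cauchy interlacing step to rule out $\rho(H)$ landing between the two smaller roots of $f_s$; your direction avoids interlacing altogether and is in that sense a little cleaner, though you will be working with the $s$-dependent bound $\rho(G_s)>n-s-2$.
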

The following result is a direct consequence of Theorem \ref{thm1.4}.
\begin{cor}\label{cor1.5}
Let $n$ be an even positive integer. If $G$ is an $n$-vertex connected graph with
$\rho(G)\geqslant\rho(K_{2}\vee(K_{n-3}\cup K_1))$, then $G$ is $1$-extendable, unless $G=K_{2}\vee(K_{n-3}\cup K_1)$.
\end{cor}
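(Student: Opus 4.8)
Corollary~\ref{cor1.5} is nothing more than the case $k=1$ of Theorem~\ref{thm1.4}, so the plan is simply to specialize that theorem. Putting $k=1$, the graph $K_{2k}\vee(K_{n-2k-1}\cup K_1)$ becomes $K_{2}\vee(K_{n-3}\cup K_1)$, so the hypothesis $\rho(G)\geqslant\rho(K_{2k}\vee(K_{n-2k-1}\cup K_1))$ reads $\rho(G)\geqslant\rho(K_{2}\vee(K_{n-3}\cup K_1))$, and the conclusion reads: $G$ is $1$-extendable unless $G=K_{2}\vee(K_{n-3}\cup K_1)$. That is exactly the statement to be proved, so no further argument is needed.

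For completeness I would also check that the exception is genuine, i.e.\ that $K_{2}\vee(K_{n-3}\cup K_1)$ is not $1$-extendable. Let $e$ be the edge spanning the two vertices of the $K_2$-side. Deleting the endpoints of $e$ leaves $K_{n-3}\cup K_1$, which contains the odd component $K_1$ and hence has no perfect matching; thus $e$ lies in no perfect matching of $K_{2}\vee(K_{n-3}\cup K_1)$, so the size-$1$ matching $\{e\}$ cannot be extended. (For even $n\geqslant 4$ this graph is connected, since the $K_1$-vertex is joined to both endpoints of $e$, so it is a legitimate instance in the corollary's setting.)

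There is essentially no obstacle in this deduction; the only points to keep track of are that $n$ must be even and at least $4$ for $1$-extendability to be defined — both already assumed — and that the extremal graph in Theorem~\ref{thm1.4} specializes correctly at $k=1$, which it does. All the actual content resides in Theorem~\ref{thm1.4}: there one would argue that if $G$ fails to be $k$-extendable then some matching $M$ of size $k$ is non-extendable, so that $G-V(M)$ possesses no perfect matching; combining this structural obstruction with the spectral-radius hypothesis — via edge-count estimates together with eigenvector/Rayleigh-quotient comparisons against the candidate extremal graph — pins $G$ down, and the equality analysis isolates $K_{2k}\vee(K_{n-2k-1}\cup K_1)$ as the sole exception.
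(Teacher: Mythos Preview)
Your proposal is correct and matches the paper's approach exactly: the paper states Corollary~\ref{cor1.5} as a direct consequence of Theorem~\ref{thm1.4} without further proof, and your specialization $k=1$ is precisely that deduction. Your additional verification that $K_{2}\vee(K_{n-3}\cup K_1)$ is genuinely not $1$-extendable is a nice sanity check that the paper omits.
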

Our last main result gives a sufficient condition to ensure {that} a graph $G$ is $1$-excludable according to the spectral radius of $G.$
\begin{thm}\label{thm1.6}
Let $n$ be an even positive integer and $G$ be an $n$-vertex connected graph with $\delta(G)\geqslant2$. Then $G$ is $1$-excludable if the following three conditions hold:
\begin{wst}
\item[{\rm (i)}] $\rho(G)\geqslant\rho(K_2\vee(K_2\cup2K_1))$ for $n=6$, unless $G=K_2\vee(K_2\cup2K_1)$;
\item[{\rm (ii)}] $\rho(G)\geqslant\rho(K_3\vee(K_2\cup3K_1))$ for $n=8$, unless $G=K_3\vee(K_2\cup3K_1)$;
\item[{\rm (iii)}] $\rho(G)\geqslant\rho(K_1\vee(K_2\cup K_{n-3}))$ for $n\geqslant10$, unless $G=K_1\vee(K_2\cup K_{n-3})$.
\end{wst}
\end{thm}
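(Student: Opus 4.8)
\medskip
\noindent\textbf{Proof plan.}
We argue by contradiction: suppose $G$ is an $n$-vertex connected graph with $\delta(G)\geqslant 2$ satisfying the stated spectral lower bound, which is \emph{not} $1$-excludable and is \emph{not} the designated exceptional graph for its value of $n$; we derive a contradiction. Two ingredients drive the argument. The first is the minimum-degree refinement of Hong's spectral inequality: for a connected graph $H$ on $n$ vertices with $\delta(H)\geqslant 2$ one has $|E(H)|\geqslant\frac12\bigl(\rho(H)^2-\rho(H)+2n-2\bigr)$. The second is a combinatorial reduction. Since $G$ is not $1$-excludable, either $G$ has no perfect matching, or $G$ has a perfect matching but some edge $e=uv$ lies in every perfect matching, i.e.\ $G-e$ has no perfect matching. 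By Tutte's theorem and the parity of $n$, the first case produces a set $S\subseteq V(G)$ with $o(G-S)\geqslant|S|+2$, and the second produces a set $S\subseteq V(G)\setminus\{u,v\}$ with $o((G-e)-S)\geqslant|S|+2$; in the latter case a short parity count forces $o(G-S)=|S|$, forces $u$ and $v$ to lie in a common \emph{even} component $C$ of $G-S$, and forces $e$ to be a bridge of $C$ whose removal splits $C$ into two odd pieces. Consequently $G$ is a spanning subgraph of a graph $\widehat G$ obtained by completely joining $K_{|S|}$ to a vertex-disjoint union of cliques, together with (in the second case) one further component that is a pair of cliques joined by a single edge; and $\delta(G)\geqslant2$ forbids singleton cliques among these components whenever $|S|\leqslant1$. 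In particular $\rho(G)\leqslant\rho(\widehat G)$.

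\smallskip
For $n\geqslant 10$ we finish by passing through Theorem \ref{thm1.3}. The graph $K_1\vee(K_2\cup K_{n-3})$ contains $K_{n-2}$ as a proper subgraph and is connected, hence $\rho(K_1\vee(K_2\cup K_{n-3}))>\rho(K_{n-2})=n-3$. Since $t\mapsto t^2-t$ is increasing for $t\geqslant\frac12$, the hypothesis $\rho(G)\geqslant\rho(K_1\vee(K_2\cup K_{n-3}))$ gives $\rho(G)^2-\rho(G)>(n-3)(n-4)$, so the refined Hong inequality yields $|E(G)|>\frac12\bigl((n-3)(n-4)+2n-2\bigr)=\frac12(n^2-5n+10)$; as the last quantity is an integer (because $n$ is even), we conclude $|E(G)|\geqslant\frac12(n^2-5n+12)=\binom{n-2}{2}+3$. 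By Theorem \ref{thm1.3}(iii), the graph $G$ is then $1$-excludable unless it is one of the two exceptional graphs listed there; since $G$ is not $1$-excludable it is one of them, and a direct evaluation of spectral radii (via the quotient matrix of the natural equitable partition) shows that the graph other than $K_1\vee(K_2\cup K_{n-3})$ has spectral radius strictly less than $\rho(K_1\vee(K_2\cup K_{n-3}))$. Hence $G=K_1\vee(K_2\cup K_{n-3})$, contradicting our assumption.

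\smallskip
For $n=6$ the same reasoning gives $|E(G)|\geqslant10$, so Theorem \ref{thm1.3}(i) forces $G=K_2\vee(K_2\cup2K_1)$, a contradiction. For $n=8$ the refined Hong inequality yields only $|E(G)|\geqslant18$, one short of the value $19$ needed to invoke Theorem \ref{thm1.3}(ii), so the residual case $|E(G)|=18$ must be treated directly. Here the combinatorial reduction is used: one enumerates the finitely many possible host graphs $\widehat G$ on $8$ vertices (parametrized by $|S|$ and the component sizes --- a short list including $K_3\vee(K_2\cup3K_1)$, $K_3\vee5K_1$, $K_2\vee(K_1\cup K_3\cup K_2)$, $K_1\vee(K_2\cup K_5)$ and a few sparser ones), computes $\rho(\widehat G)$ for each, and checks that every host other than $K_3\vee(K_2\cup3K_1)$ has spectral radius strictly smaller than $\rho(K_3\vee(K_2\cup3K_1))$; since $\rho(G)\leqslant\rho(\widehat G)$, this rules out $|E(G)|=18$, and Theorem \ref{thm1.3}(ii) again forces $G=K_3\vee(K_2\cup3K_1)$.

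\smallskip
The delicate part is the collection of spectral comparisons, and above all the near-ties. The graphs $K_{n/2-1}\vee(K_2\cup(n/2-1)K_1)$ and $K_1\vee(K_2\cup K_{n-3})$ exchange dominance precisely around $n=10$ --- at $n=10$ the two relevant graphs even have the same number of edges --- so that comparison has to be carried out sharply, by comparing the characteristic polynomials of the corresponding $3\times 3$ quotient matrices at a carefully chosen point; this is what pins down the three-regime split in the statement. A second point requiring care, used implicitly when reducing each host list to a short explicit one, is the ``concentration'' principle: for a fixed value of $|S|$, moving one vertex from a smaller clique-component of $\widehat G$ to a larger one strictly increases $\rho(\widehat G)$, so the host of maximum spectral radius for each $|S|$ has all of its clique-components but one as small as $\delta(G)\geqslant2$ and the parity and counting constraints allow.
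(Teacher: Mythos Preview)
Your approach is correct but follows a genuinely different route from the paper's proof. The paper argues entirely at the spectral level: it chooses $G$ to have maximum $\rho$ among non-$1$-excludable graphs, reduces (via Lemmas~\ref{lem2.1}, \ref{lem2.3}, \ref{lem2.8}) to a short list of candidate extremal shapes $K_s\vee(K_{\hat n-1}^+\cup K_{n-2s-\hat n+1}\cup(s-1)K_1)$, and then carries out a sequence of characteristic-polynomial comparisons (Claims~1--5) to show that each candidate has $\rho$ at most that of the stated exceptional graph. No edge-count inequality is used.

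You instead convert the spectral hypothesis into a size hypothesis via the Hong--Shu--Fang bound, and then invoke Theorem~\ref{thm1.3} directly. For $n\geqslant 10$ this is strikingly economical: the chain $\rho(G)\geqslant\rho(K_1\vee(K_2\cup K_{n-3}))>n-3$, together with the integrality of $\tfrac12(n^2-5n+10)$ for even $n$, delivers $|E(G)|\geqslant\binom{n-2}{2}+3$ in one line, after which only a single quotient-matrix comparison (to eliminate the second exceptional graph of Theorem~\ref{thm1.3}(iii), which arises only at $n=10$) is needed. The paper's argument, by contrast, devotes several pages to Claims~2--5 to reach the same conclusion. Your $n=6$ case works the same way once one observes $\rho(K_2\vee(K_2\cup2K_1))>\rho(K_2\vee4K_1)=\tfrac{1+\sqrt{33}}{2}$, which makes the integer rounding go through. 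The trade-off is that your method imports an external spectral inequality not in the paper's toolkit, and for $n=8$ the Hong bound falls one edge short, so you must still perform a host enumeration essentially equivalent to the paper's $n=8$ computations; the paper's uniform treatment avoids splitting off this case.
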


The proof techniques for our main results follow the idea of O
\cite{S.O.}. Together with some new ideas, we make the proofs work. Our paper is organized as follows. In Section 2, we give some preliminary results. In Section 3, we give the proofs of Theorems \ref{thm1.1} and \ref{thm1.3}. In Section 4, we give the proof of Theorem \ref{thm1.4} and in Section 5, we give the proof of Theorem \ref{thm1.6}. Some concluding remarks are given in the last section.

\section{\normalsize Some preliminaries}
In this section, we present some necessary preliminary results, which play an important role in the subsequent sections. The first one follows directly from \cite[Theorem 6.8]{R.B.}.
\begin{lem}[\cite{R.B.}]\label{lem2.1}
Let $G$ be a connected graph and let $H$ be a proper subgraph of $G$. Then $\rho(G)>\rho(H)$.
\end{lem}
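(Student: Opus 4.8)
The plan is to prove the stronger matrix statement underlying the lemma: if $A$ is a nonnegative irreducible matrix and $B$ is a nonnegative matrix with $B\le A$ entrywise but $B\ne A$, then $\rho(B)<\rho(A)$, and then to read off the graph version. First I would set $A=A(G)$ and embed $A(H)$ into an $n\times n$ matrix $B$ (with $n=|V(G)|$) by indexing rows and columns with $V(G)$, putting $B_{ij}=1$ exactly when $v_iv_j\in E(H)$ with $v_i,v_j\in V(H)$, and filling the rows and columns of the vertices in $V(G)\setminus V(H)$ with zeros. Adjoining such zero rows and columns only appends zero eigenvalues, so $\rho(B)=\rho(H)$; and since $H$ is a \emph{proper} subgraph, either some edge of $G$ is missing from $H$, or some vertex of $G$ is absent from $H$ (and such a vertex has a neighbour in the connected graph $G$). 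In both cases $0\le B\le A$ with $B\ne A$.

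Because $G$ is connected, $A=A(G)$ is irreducible, so the Perron--Frobenius theorem applies: $\rho(A)=\rho(G)$ is a simple eigenvalue, and $A$ admits a strictly positive right eigenvector $x>0$ and a strictly positive left eigenvector $y>0$, both for $\rho(A)$. The nonnegative matrix $B$ likewise has a nonnegative eigenvector $z\ge 0$, $z\ne 0$, with $Bz=\rho(B)z$. The weak inequality follows by sandwiching: writing $\rho=\rho(A)$ and using $B\le A$ together with $y>0$ and $z\ge 0$, one gets $\rho(B)\,(y^{\!\top}z)=y^{\!\top}Bz\le y^{\!\top}Az=\rho\,(y^{\!\top}z)$, and since $y^{\!\top}z>0$ this yields $\rho(B)\le\rho$.

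The real content is promoting this to a strict inequality, which I expect to be the main obstacle precisely because $B$ need not be irreducible and its Perron vector $z$ may have zero coordinates. Suppose $\rho(B)=\rho$. Then the sandwich above is an equality, so $y^{\!\top}(A-B)z=0$; as $A-B\ge 0$, $y>0$ and $z\ge 0$, every summand vanishes, giving $(A-B)_{ij}z_j=0$ for all $i,j$. Summing over $j$ shows $\big((A-B)z\big)_i=0$ for every $i$, whence $Az=Bz+(A-B)z=\rho z$; that is, $z$ is a nonnegative eigenvector of the irreducible matrix $A$ for its Perron value $\rho$. By the simplicity and positivity part of Perron--Frobenius, the only such eigenvectors are positive multiples of $x$, so $z>0$. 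But then $(A-B)_{ij}z_j=0$ with $z_j>0$ forces $A-B=0$, contradicting $B\ne A$. Therefore $\rho(B)<\rho$, i.e.\ $\rho(H)<\rho(G)$.

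I would finally remark that no reduction to elementary moves is needed in this matrix formulation, although an inductive route is equally available: the same comparison handles deletion of a single edge, while deletion of a vertex is absorbed by the zero-padding, and iterating recovers an arbitrary proper subgraph. Throughout, the only genuinely delicate point is the equality analysis, where the irreducibility of $A(G)$ (equivalently, the connectedness of $G$) is used in an essential way to force $z>0$.
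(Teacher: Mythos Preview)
Your argument is correct. The zero-padding to place $A(H)$ inside an $n\times n$ matrix $B$ with $0\le B\le A(G)$, $B\ne A(G)$, is the right reduction, and your equality analysis is clean: from $y^{\!\top}(A-B)z=0$ with $y>0$ you get $(A-B)z=0$, hence $Az=\rho z$, and irreducibility of $A(G)$ then forces $z>0$, yielding $A=B$. All the delicate points (existence of a nonnegative Perron eigenvector for the possibly reducible $B$, positivity of $y^{\!\top}z$, simplicity of the Perron eigenvalue of $A$) are handled.

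As for comparison with the paper: there is nothing to compare. The paper does not prove Lemma~\ref{lem2.1}; it merely quotes it as a direct consequence of \cite[Theorem~6.8]{R.B.}. You have supplied a complete Perron--Frobenius proof where the paper gives only a citation, and your route is the standard textbook one.
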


Let $M$ be an $n\times n$ irreducible and nonnegative matrix. Then by the Perron-Frobenius theorem (see \cite{R.A}),  there exists a unit positive eigenvector, say ${\bf x}=(x_1,x_2,\ldots,x_n)^T$, of $M$ corresponding to spectral radius of $M$. As usual, we call ${\bf x}$ the \textit{Perron vector} of $M$.

The following lemma is a direct consequence of \cite[Proposition 16]{0007}.
\begin{lem}[\cite{0007}]\label{lem2.2}
Let $G$ be an $n$-vertex connected graph and let ${\bf x} = (x_1, x_2, \ldots, x_n)^T$ be the Perron vector of $A(G)$ corresponding to $\rho(G)$. If $v_i, \, v_j$ are in $V(G)$ satisfying $N_G(v_i)\setminus \{v_j\} = N_G(v_j)\setminus \{v_i\}$, then $x_i=x_j$.
\end{lem}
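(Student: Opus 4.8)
The plan is to read off the conclusion directly from the eigenvalue equation for the Perron vector, so the whole argument is a short case analysis. Write $\rho = \rho(G)$ and recall that ${\bf x}$ satisfies $A(G){\bf x} = \rho{\bf x}$. Comparing the $i$-th and $j$-th coordinates of this identity gives the two scalar equations $\rho x_i = \sum_{v_k \in N_G(v_i)} x_k$ and $\rho x_j = \sum_{v_k \in N_G(v_j)} x_k$. Since $G$ is connected it has at least one edge, so by the Perron--Frobenius theorem $\rho > 0$ and every entry of ${\bf x}$ is positive; these positivity facts are what make the final cancellation legitimate.

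From here I would split into two cases according to whether $v_i$ and $v_j$ are adjacent. If $v_iv_j \notin E(G)$, then $v_j \notin N_G(v_i)$ and $v_i \notin N_G(v_j)$, so the hypothesis $N_G(v_i)\setminus\{v_j\} = N_G(v_j)\setminus\{v_i\}$ collapses to $N_G(v_i) = N_G(v_j)$. The two coordinate sums are then literally identical, whence $\rho x_i = \rho x_j$, and dividing by $\rho > 0$ yields $x_i = x_j$.

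If instead $v_iv_j \in E(G)$, write $N$ for the common set $N_G(v_i)\setminus\{v_j\} = N_G(v_j)\setminus\{v_i\}$, so that the two coordinate equations become $\rho x_i = x_j + \sum_{v_k \in N} x_k$ and $\rho x_j = x_i + \sum_{v_k \in N} x_k$. Subtracting one from the other cancels the common sum $\sum_{v_k \in N} x_k$ and leaves $\rho(x_i - x_j) = x_j - x_i$, that is, $(\rho + 1)(x_i - x_j) = 0$. As $\rho + 1 > 0$, this forces $x_i = x_j$, completing the argument.

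There is no genuine obstacle here: the entire content is the bookkeeping in the two cases, and the only point demanding a word of care is ensuring $\rho > 0$ (respectively $\rho + 1 > 0$) before dividing, which follows from connectedness via Perron--Frobenius. I would also remark that this is precisely why the hypothesis is phrased with the deleted neighbourhoods $N_G(v_i)\setminus\{v_j\}$ rather than the plain open neighbourhoods: that formulation is exactly what lets a single statement cover both the non-adjacent (\emph{false twin}) and the adjacent (\emph{true twin}) configurations in one stroke.
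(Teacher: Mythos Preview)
Your proof is correct. The paper, however, does not actually prove this lemma: it merely states that it ``is a direct consequence of \cite[Proposition 16]{0007}'' and gives no argument of its own. Your self-contained derivation from the eigenvalue equation $A(G){\bf x}=\rho{\bf x}$, split into the non-adjacent and adjacent cases, is exactly the standard elementary proof of this fact and is entirely adequate here; there is nothing further to compare.
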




The following lemma states that the spectral radius increases after some graph transformation. This lemma is very important in proving our main result.
\begin{lem}[\cite{H.L}]\label{lem2.3}
Let $G$ be a connected graph and let $u$, $v$ be two specified vertices of $G$. Assume that $G^*$ is obtained from $G$ by replacing the edge $vw$ by a new edge $uw$ for some vertices $w$ such that $w\in N_G(v)\setminus N_G(u)$, and ${\bf x}$ is the Perron vector of $A(G)$. If $x_u\geqslant x_v$, then $\rho(G)<\rho(G^*)$.
\end{lem}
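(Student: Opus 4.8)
The plan is to establish the inequality via the Rayleigh quotient and then upgrade it to a \emph{strict} inequality by a coordinate-wise analysis in the boundary case. Write $\rho=\rho(G)$ and let ${\bf x}=(x_1,\dots,x_n)^T$ be the unit Perron vector of $A(G)$. Since $G$ is connected, $A(G)$ is irreducible, so by the Perron--Frobenius theorem ${\bf x}$ is entrywise positive and satisfies $A(G){\bf x}=\rho{\bf x}$, whence ${\bf x}^T A(G){\bf x}=\rho$. Let $W\subseteq N_G(v)\setminus N_G(u)$ denote the (nonempty) set of vertices $w$ for which the edge $vw$ is deleted and the edge $uw$ is inserted to form $G^*$; the hypothesis $w\in N_G(v)\setminus N_G(u)$ guarantees that each $vw$ is a genuine edge of $G$ and each $uw$ a genuine non-edge, so that $A(G^*)$ is again a simple $0$--$1$ adjacency matrix and $u\ne w$, $v\ne w$ for every $w\in W$.

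First I would compare the two quadratic forms. Since $A(G^*)$ arises from $A(G)$ by, for each $w\in W$, removing the symmetric pair of $1$'s in positions $(v,w),(w,v)$ and inserting them in positions $(u,w),(w,u)$, a direct expansion of ${\bf y}^T A{\bf y}=\sum_{i,j}A_{ij}y_iy_j$ gives
\begin{equation}
{\bf x}^T A(G^*){\bf x}-{\bf x}^T A(G){\bf x}=2\sum_{w\in W}(x_ux_w-x_vx_w)=2(x_u-x_v)\sum_{w\in W}x_w .
\end{equation}
Because ${\bf x}$ is a unit vector, the Rayleigh principle yields $\rho(G^*)\geqslant {\bf x}^T A(G^*){\bf x}$. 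Combining this with the displayed identity, and using $x_u\geqslant x_v$ together with $x_w>0$ and $W\ne\varnothing$, I obtain $\rho(G^*)\geqslant {\bf x}^T A(G^*){\bf x}\geqslant\rho=\rho(G)$, which already delivers the non-strict inequality.

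It remains to exclude equality, and this is the delicate step. If $x_u>x_v$, the displayed difference is strictly positive, so $\rho(G^*)\geqslant {\bf x}^T A(G^*){\bf x}>\rho(G)$ and we are finished. The harder case is $x_u=x_v$, where the two quadratic forms coincide and the Rayleigh bound alone gives only $\rho(G^*)\geqslant\rho(G)$. Here I would argue by contradiction: suppose $\rho(G^*)=\rho$. Then the positive unit vector ${\bf x}$ attains the maximum of the Rayleigh quotient of the symmetric matrix $A(G^*)$, so ${\bf x}$ must be an eigenvector of $A(G^*)$ for its largest eigenvalue, that is, $A(G^*){\bf x}=\rho{\bf x}$. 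Subtracting the identity $A(G){\bf x}=\rho{\bf x}$ gives $(A(G^*)-A(G)){\bf x}={\bf 0}$. Reading off the $u$-th coordinate of this vector, the only nonzero entries in row $u$ of $A(G^*)-A(G)$ are the inserted $1$'s in the columns indexed by $W$ (no edge at $u$ is deleted, since the deleted edges all meet $v\ne u$), so that coordinate equals $\sum_{w\in W}x_w>0$ because every $x_w>0$ and $W\ne\varnothing$. This contradicts the vanishing of the coordinate, and hence $\rho(G^*)>\rho(G)$.

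I expect the main obstacle to be exactly this strictness argument in the balanced case $x_u=x_v$: the quadratic-form comparison is then tight, so one cannot conclude from the Rayleigh bound alone and must instead invoke the eigenvector characterisation of the Rayleigh maximiser and isolate the single coordinate $u$ at which the inserted edges contribute an unambiguously positive amount. A minor point to verify along the way is that $G^*$ stays a simple graph and that $W$ is nonempty (otherwise $G^*=G$ and the conclusion fails); both are secured by the hypothesis $w\in N_G(v)\setminus N_G(u)$ and the word ``some'' in the statement.
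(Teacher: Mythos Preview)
The paper does not supply a proof of this lemma; it is quoted from \cite{H.L} as a known tool, so there is no in-paper argument to compare against. Your proof is correct and is essentially the standard Rayleigh-quotient argument for edge-shifting lemmas of this type. The treatment of the boundary case $x_u=x_v$---concluding that ${\bf x}$ would have to be an eigenvector of $A(G^*)$ and then reading off the $u$-th coordinate of $(A(G^*)-A(G)){\bf x}$---is clean and, usefully, does not depend on $G^*$ being connected.

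One small point: you assert $u\ne w$ for every $w\in W$, but the condition $w\in N_G(v)\setminus N_G(u)$ does not by itself exclude $w=u$ (since $u\notin N_G(u)$ is automatic); you need $u\notin W$ so that row $u$ of $A(G^*)-A(G)$ consists solely of the inserted $+1$'s in columns $W$. In the intended setting this is implicit---taking $w=u$ would create a loop and $G^*$ would no longer be simple---so it is a matter of stating the hypothesis precisely rather than a defect in the argument itself.
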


Let $M$ be a real matrix whose rows and columns are indexed by $V=\{1, \ldots, n\}$. Assume that $M$, with respect to the partition $\pi:\ V=V_1\cup\cdots\cup V_s$, can be written as
$$
M=\left(
  \begin{array}{ccc}
    M_{11} & \cdots & M_{1s} \\
    \vdots & \ddots & \vdots \\
    M_{s1} & \cdots & M_{ss} \\
  \end{array}
  \right),
$$
where $M_{ij}$ denotes the submatrix (block) of $M$ formed by rows in $V_i$ and columns in $V_j$. Let $q_{ij}$ denote the average row sum of $M_{ij}$. The matrix $M_{\pi}=(q_{ij})$ is called the \textit{quotient matrix} of $M$. If the row sum of each block $M_{ij}$ is a constant, then the partition is \textit{equitable}.
\begin{lem}[\cite{YLH}]\label{lem2.4}
Let $M$ be a square matrix with an equitable partition $\pi$ and let $M_{\pi}$ be the corresponding quotient matrix. Then every eigenvalue of $M_{\pi}$ is an eigenvalue of $M$. Furthermore, if $M$ is nonnegative, then the largest eigenvalues of $M$ and $M_{\pi}$ are equal.
\end{lem}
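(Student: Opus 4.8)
The plan is to prove both assertions through the \emph{characteristic matrix} of the partition $\pi$. Let $S$ be the $n\times s$ matrix whose $j$-th column is the indicator vector of $V_j$, so that $S_{uj}=1$ if $u\in V_j$ and $S_{uj}=0$ otherwise. Since the blocks $V_1,\ldots,V_s$ are nonempty and disjoint, the columns of $S$ are linearly independent, i.e. $S$ has full column rank $s$; in particular $S\mathbf y\neq\mathbf 0$ whenever $\mathbf y\neq\mathbf 0$. The whole argument rests on the single identity $MS=SM_\pi$, which I would verify entrywise. For $u\in V_i$ the $(u,j)$-entry of $MS$ is $\sum_{w\in V_j}M_{uw}$, and this equals the constant row sum $q_{ij}$ of the block $M_{ij}$ precisely because $\pi$ is equitable; on the other hand, as $u$ lies in exactly one block $V_i$, the $(u,j)$-entry of $SM_\pi$ is $\sum_{\ell}S_{u\ell}q_{\ell j}=q_{ij}$. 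Hence the two products agree, and only the constancy of the row sums is used.

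For the first assertion, let $\mu$ be any eigenvalue of $M_\pi$ with eigenvector $\mathbf y\neq\mathbf 0$, so $M_\pi\mathbf y=\mu\mathbf y$. Multiplying the identity $MS=SM_\pi$ on the right by $\mathbf y$ gives $M(S\mathbf y)=SM_\pi\mathbf y=\mu(S\mathbf y)$, and since $S\mathbf y\neq\mathbf 0$, the vector $S\mathbf y$ is an eigenvector of $M$ for $\mu$. Thus every eigenvalue of $M_\pi$ is an eigenvalue of $M$; equivalently, $\mathrm{col}(S)$ is an $M$-invariant subspace on which $M$ acts as $M_\pi$ in the basis of columns of $S$.

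For the second assertion assume in addition $M\geqslant 0$. Then $M_\pi\geqslant 0$ as well, because each $q_{ij}$ is an average of nonnegative entries, so by the Perron--Frobenius theorem $\rho(M_\pi)$ is an eigenvalue of $M_\pi$; by the first assertion it is then an eigenvalue of $M$, whence $\rho(M_\pi)\leqslant\rho(M)$. For the reverse inequality I would first treat the strictly positive case: if $M$ is (entrywise) positive then $M_\pi$ is positive too (its entries being averages of positive numbers), the Perron vector $\mathbf y$ of $M_\pi$ is strictly positive, and $S\mathbf y$ is a strictly positive vector with $M(S\mathbf y)=\rho(M_\pi)(S\mathbf y)$; the Collatz--Wielandt bound $\rho(M)\leqslant\max_u (MS\mathbf y)_u/(S\mathbf y)_u=\rho(M_\pi)$ then gives equality. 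The general nonnegative case follows by approximation: the perturbation $M+\varepsilon J_n$ (with $J_n$ the all-ones matrix) is strictly positive and remains equitable with respect to $\pi$, its quotient being $M_\pi+\varepsilon (J_n)_\pi$; applying the positive case and letting $\varepsilon\to 0$, continuity of the spectral radius yields $\rho(M)=\rho(M_\pi)$.

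The entrywise check of $MS=SM_\pi$ and the eigenvector lifting are routine, so the crux lies in the reverse inequality $\rho(M)\leqslant\rho(M_\pi)$ of the second assertion. The subtlety is that the containment of spectra only delivers $\rho(M_\pi)\leqslant\rho(M)$, and for a general nonnegative $M$ (which may well be reducible) the mere existence of the nonnegative lifted eigenvector $S\mathbf y$ does not by itself force its eigenvalue to be the spectral radius of $M$. This is exactly where a genuine Perron--Frobenius-type input is needed, namely the Collatz--Wielandt estimate applied to the block-constant positive test vector $S\mathbf y$ together with the positivity perturbation above. In the adjacency-matrix applications of this paper $M$ is symmetric, and there the same equality can alternatively be read off from the Rayleigh-quotient/interlacing viewpoint after symmetrizing $M_\pi$ via the diagonal congruence $\mathrm{diag}(\sqrt{|V_1|},\ldots,\sqrt{|V_s|})$.
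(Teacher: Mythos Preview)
The paper does not supply its own proof of this lemma; it is quoted verbatim from \cite{YLH} and used as a black box throughout. There is therefore nothing in the paper to compare your argument against.

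That said, your proof is correct and is essentially the standard one. The intertwining relation $MS=SM_\pi$, verified entrywise exactly as you do, immediately lifts every eigenpair $(\mu,\mathbf y)$ of $M_\pi$ to the eigenpair $(\mu,S\mathbf y)$ of $M$, and the full column rank of $S$ guarantees $S\mathbf y\neq\mathbf 0$. For the second assertion your diagnosis is right: spectrum containment only gives $\rho(M_\pi)\leqslant\rho(M)$, and the reverse inequality genuinely needs a Perron--Frobenius ingredient. Your route via Collatz--Wielandt on the block-constant positive vector $S\mathbf y$ in the strictly positive case, followed by the perturbation $M\mapsto M+\varepsilon J_n$ (which preserves equitability, with quotient $M_\pi+\varepsilon(J_n)_\pi$) and continuity of the spectral radius, is clean and complete. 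One could shorten slightly by noting that the Collatz--Wielandt upper bound $\rho(M)\leqslant\max_u (M\mathbf x)_u/\mathbf x_u$ holds for \emph{any} nonnegative $M$ and strictly positive $\mathbf x$, so the perturbation is only needed to force strict positivity of the Perron vector of $M_\pi$; but this is a matter of taste, not substance.
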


The subsequent lemma is the well-known Cauchy Interlacing Theorem.
\begin{lem}[\cite{H.M.}]\label{lem2.5}
Let $M$ be a Hermitian matrix of order $s$, and let $N$ be a principal submatrix of $M$ with order $t$. If $\lambda_1\geqslant \lambda_2\geqslant \cdots \geqslant \lambda_s$ are the eigenvalues of $M$ and $\mu_1\geqslant \mu_2\geqslant \cdots \geqslant \mu_t$ are the eigenvalues of $N$, then $\lambda_{i}\geqslant \mu_i\geqslant \lambda_{s-t+i}$ for $1\leqslant i\leqslant t$.
\end{lem}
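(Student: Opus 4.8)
The plan is to deduce the two interlacing inequalities from the Courant--Fischer variational characterization of the eigenvalues of a Hermitian matrix, by identifying the Rayleigh quotient of the principal submatrix $N$ with the Rayleigh quotient of $M$ restricted to a coordinate subspace.

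First I would reduce to the case where $N$ occupies the leading $t$ coordinates. Since $N$ is a principal submatrix of $M$, there is a permutation matrix $P$ (hence unitary, with $P^{*}=P^{-1}$) such that $P^{*}MP$ is Hermitian with the same eigenvalues $\lambda_1\geqslant\cdots\geqslant\lambda_s$ as $M$ and whose leading $t\times t$ block is exactly $N$. Replacing $M$ by $P^{*}MP$, we may assume $N=U^{*}MU$, where $U=[\,e_1\ \cdots\ e_t\,]$ is the $s\times t$ matrix whose columns are the first $t$ standard basis vectors. Writing $W=\operatorname{span}\{e_1,\ldots,e_t\}\subseteq\mathbb{C}^{s}$ and $R_M(x)=\frac{x^{*}Mx}{x^{*}x}$, the key observation is that for every $y\in\mathbb{C}^{t}$ the vector $x=Uy$ lies in $W$ and satisfies $x^{*}x=y^{*}y$ and $x^{*}Mx=y^{*}Ny$; moreover $y\mapsto Uy$ is a linear bijection from $\mathbb{C}^{t}$ onto $W$ that carries $i$-dimensional subspaces to $i$-dimensional subspaces. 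Hence the Rayleigh quotient of $N$ on $\mathbb{C}^{t}$ is precisely $R_M$ confined to $W$.

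Next I would invoke Courant--Fischer for $M$ in both its max--min and min--max forms,
\begin{equation*}
\lambda_i=\max_{\substack{S\subseteq\mathbb{C}^{s}\\ \dim S=i}}\ \min_{0\neq x\in S}R_M(x)=\min_{\substack{S\subseteq\mathbb{C}^{s}\\ \dim S=s-i+1}}\ \max_{0\neq x\in S}R_M(x),
\end{equation*}
and, using the previous paragraph, express the eigenvalues of $N$ as the same extremal quantities but with the subspaces forced to lie inside $W$,
\begin{equation*}
\mu_i=\max_{\substack{S\subseteq W\\ \dim S=i}}\ \min_{0\neq x\in S}R_M(x)=\min_{\substack{S\subseteq W\\ \dim S=t-i+1}}\ \max_{0\neq x\in S}R_M(x).
\end{equation*}
The two inequalities then drop out by comparing feasible families of subspaces. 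For $\mu_i\leqslant\lambda_i$ I use the max--min forms: the maximum defining $\mu_i$ ranges only over the $i$-dimensional subspaces contained in $W$, a subcollection of all $i$-dimensional subspaces of $\mathbb{C}^{s}$ over which $\lambda_i$ is maximized, so restricting the maximization cannot increase the value. For $\mu_i\geqslant\lambda_{s-t+i}$ I use the min--max forms together with the identity $s-(s-t+i)+1=t-i+1$, which exhibits $\lambda_{s-t+i}$ as the minimum of $\max_{0\neq x\in S}R_M(x)$ over all $(t-i+1)$-dimensional subspaces of $\mathbb{C}^{s}$, while $\mu_i$ is the same minimum taken only over those subspaces inside $W$; minimizing over a subcollection cannot decrease the value. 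Combining gives $\lambda_{s-t+i}\leqslant\mu_i\leqslant\lambda_i$ for all $1\leqslant i\leqslant t$.

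The only substantive input is the Courant--Fischer theorem itself, which I would treat as the standard prerequisite (it follows from the spectral theorem for $M$ together with a dimension count showing that every $i$-dimensional subspace meets the span of the top $s-i+1$ eigenvectors nontrivially). Granting that, the argument is a pure bookkeeping comparison of admissible subspaces, and the one point that must be handled carefully is the dimension matching $s-(s-t+i)+1=t-i+1$, which is what correctly pairs $\mu_i$ with the lower eigenvalue $\lambda_{s-t+i}$ rather than with some other index. Everything else is immediate once the Rayleigh quotient of the principal submatrix $N$ is recognized as the Rayleigh quotient of $M$ restricted to the coordinate subspace $W$.
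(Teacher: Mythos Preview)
Your proof is correct and follows the standard Courant--Fischer route to the Cauchy interlacing inequalities. Note, however, that the paper does not supply its own proof of this lemma: it is quoted as a known result with a citation to Minc's book, so there is no ``paper's proof'' to compare against. Your argument is exactly the classical one and would serve perfectly well as a self-contained justification.
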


Let $o(G)$ be the number of odd components (components with odd order) of $G$. In 1995, Chen~\cite{C.P} established a necessary and sufficient condition for a graph to be $k$-extendable, which reads as the following lemma.
\begin{lem}[\cite{C.P}]\label{lem2.6}
Let $k\geqslant1$ be an integer and let $G$ be a simple graph. Then $G$ is $k$-extendable if and only if $o(G-S)\leqslant|S|-2k$ for every $S\subseteq V(G)$ such that $G[S]$ contains $k$ independent edges.
\end{lem}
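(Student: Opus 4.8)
The plan is to reduce the statement to the classical Tutte $1$-factor theorem, which asserts that a graph $H$ has a perfect matching if and only if $o(H-T)\leqslant|T|$ for every $T\subseteq V(H)$. The bridge between $k$-extendability and Tutte's condition is the elementary observation that a matching $M$ of size $k$ extends to a perfect matching of $G$ if and only if the graph $G-V(M)$, obtained by deleting the $2k$ vertices covered by $M$, has a perfect matching. Once this is fixed, each direction becomes a translation between a statement about $G$ and a statement about the deleted graph.

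First I would prove necessity. Suppose $G$ is $k$-extendable, and let $S\subseteq V(G)$ be any set for which $G[S]$ contains $k$ independent edges $M=\{e_1,\ldots,e_k\}$; note $V(M)\subseteq S$ and $|V(M)|=2k$. By $k$-extendability, $M$ lies in some perfect matching $F$ of $G$. I would then count how the odd components of $G-S$ are saturated by $F$: since each odd component $C$ has odd order, $F$ cannot match $V(C)$ entirely inside $C$, so some vertex of $C$ is matched by $F$ to a vertex outside $C$; because $C$ is a component of $G-S$, that partner must lie in $S$. Moreover the $2k$ vertices of $V(M)$ are already matched among themselves by $M\subseteq F$, so each odd component is forced to be matched into $S\setminus V(M)$, and distinct components use distinct partners. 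Hence $o(G-S)\leqslant|S\setminus V(M)|=|S|-2k$.

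For sufficiency, assume $o(G-S)\leqslant|S|-2k$ whenever $G[S]$ contains $k$ independent edges. Let $M=\{e_1,\ldots,e_k\}$ be an arbitrary matching of size $k$ in $G$ and put $G'=G-V(M)$; by the observation above it suffices to show $G'$ has a perfect matching. I would verify Tutte's condition for $G'$: given $T\subseteq V(G')$, set $S=T\cup V(M)$. Then $G[S]$ contains the $k$ independent edges of $M$, and $G-S=G'-T$, so the hypothesis gives $o(G'-T)=o(G-S)\leqslant|S|-2k=|T|$. Thus $o(G'-T)\leqslant|T|$ for every $T\subseteq V(G')$, and Tutte's theorem yields a perfect matching of $G'$, which together with $M$ extends $M$ to a perfect matching of $G$. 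Since $M$ was arbitrary, $G$ is $k$-extendable.

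The computational content is light; the only genuinely delicate step is the saturation count in the necessity direction, where one must argue carefully that every odd component of $G-S$ sends a matching edge into $S\setminus V(M)$ rather than merely into $S$. This is exactly where the hypothesis that $G[S]$ contains $k$ independent edges is used, since fixing an extension $F\supseteq M$ forces the $2k$ vertices of $V(M)$ to be consumed internally and hence unavailable to the odd components. A secondary bookkeeping point is to confirm the prerequisites in the definition of $k$-extendability, namely that $G$ has a perfect matching and at least $2k+2$ vertices: the former follows by extending any single size-$k$ matching as in the sufficiency argument (equivalently, taking $T=\varnothing$), and I would absorb the order condition into the standing assumptions so that the equivalence is stated for graphs possessing a matching of size $k$.
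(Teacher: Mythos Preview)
Your argument is correct and is the standard reduction to Tutte's $1$-factor theorem. Note, however, that the paper does not supply its own proof of this lemma: it is quoted from Chen~\cite{C.P} as a known characterization, so there is no in-paper proof to compare against. Your necessity step (each odd component of $G-S$ must be matched by the extended perfect matching into $S\setminus V(M)$, giving $o(G-S)\leqslant|S|-2k$) and your sufficiency step (for any size-$k$ matching $M$, apply the hypothesis with $S=T\cup V(M)$ to verify Tutte's condition for $G-V(M)$) are exactly the intended mechanism, and your handling of the definitional prerequisites (existence of a perfect matching via $T=\varnothing$, the order assumption absorbed into the hypotheses) is appropriate.
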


An edge $e$ of a connected graph $G$ is called an \textit{odd-bridge} if $e$ is a bridge of $G$ and $G-e$ consists of two odd
components. Clearly, such graph $G$ has even order. The next result gives a necessary and sufficient condition for a graph to be $1$-excludable.
\begin{lem}[\cite{C.P.C,C.P}]\label{lem2.7}
Let $G$ be a connected graph. Then $G$ is $1$-excludable if and only if the following two conditions hold for all $S\subseteq V(G):$
\begin{wst}
\item[{\rm (i)}]If $G-S$ has a component containing an odd-bridge, then $o(G-S)\leqslant|S|-2$;
\item[{\rm (ii)}]$o(G-S)\leqslant|S|$, otherwise.
\end{wst}
\end{lem}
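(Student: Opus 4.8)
The plan is to reduce the entire statement to Tutte's $1$-Factor Theorem. The first observation is that $G$ has a perfect matching excluding a fixed edge $e$ precisely when $G-e$ (the graph obtained by deleting the edge $e$ while keeping both its endpoints) has a perfect matching. Hence $G$ is $1$-excludable if and only if $G-e$ has a perfect matching for every $e\in E(G)$, which by Tutte's theorem is equivalent to
$$o((G-e)-S)\leqslant |S|\quad\text{for all } e\in E(G)\ \text{and all } S\subseteq V(G).$$
Thus the whole claim becomes a comparison, over all $S$, between the Tutte condition for $G$ and that for each edge-deleted graph $G-e$.

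The technical core is a parity bookkeeping lemma describing how deleting one edge changes the number of odd components. Fix $S\subseteq V(G)$ and an edge $e=uv$. If $u\in S$ or $v\in S$, then $e$ is already absent from $G-S$, so $(G-e)-S=G-S$ and $o$ is unchanged. If $u,v\notin S$, let $C$ be the component of $G-S$ containing $e$. If $e$ is not a bridge of $C$, then $C-e$ stays connected and $o$ is again unchanged. If $e$ is a bridge, then $C-e$ splits into two pieces $C_1,C_2$, and a short parity check settles the three cases: when $|C|$ is odd exactly one piece is odd and $o$ is unchanged; when $|C|$ is even the two pieces are either both even (again $o$ is unchanged) or both odd, the latter being exactly the situation that $e$ is an odd-bridge of $C$, in which case $o$ increases by precisely $2$. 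Summarizing, $o((G-e)-S)\geqslant o(G-S)$ always, with equality unless $e$ is an odd-bridge of its (necessarily even-order) component of $G-S$, in which event $o((G-e)-S)=o(G-S)+2$.

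For sufficiency, assume (i) and (ii) and fix any $e$ and $S$. If $e$ is not an odd-bridge of its component of $G-S$, then $o((G-e)-S)=o(G-S)\leqslant|S|$, since both (i) and (ii) assert $o(G-S)\leqslant|S|$. If $e$ is an odd-bridge of some component $C$ of $G-S$, then $C$ contains an odd-bridge, so (i) yields $o(G-S)\leqslant|S|-2$, whence $o((G-e)-S)=o(G-S)+2\leqslant|S|$. In every case Tutte's condition holds for $G-e$, so $G$ is $1$-excludable.

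For necessity, assume $G$ is $1$-excludable. Then each $G-e$ has a perfect matching, so $G$ itself has one, and Tutte gives $o(G-S)\leqslant|S|$ for all $S$; this is exactly (ii) and the baseline inequality behind (i). To sharpen it, suppose some component $C$ of $G-S$ contains an odd-bridge $e$. Applying $1$-excludability to this specific edge, $G-e$ has a perfect matching, so $o((G-e)-S)\leqslant|S|$; the bookkeeping lemma gives $o((G-e)-S)=o(G-S)+2$, and therefore $o(G-S)\leqslant|S|-2$, which is (i). The main obstacle throughout is the parity analysis of the bookkeeping lemma, namely verifying that the odd-component count can only increase and increases by exactly $2$ precisely in the odd-bridge case; once that accounting is secured, both directions follow simply by feeding the right edge into Tutte's theorem.
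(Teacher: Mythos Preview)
The paper does not prove this lemma at all; it is quoted from the references \cite{C.P.C,C.P} and used as a black box, so there is no in-paper argument to compare against.

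Your proof is correct and is essentially the standard derivation from Tutte's $1$-Factor Theorem. The parity bookkeeping is accurate: removing an edge $e$ with both ends outside $S$ leaves $o$ unchanged unless $e$ is a bridge of its (even-order) component splitting it into two odd parts, in which case $o$ increases by exactly $2$; in all other cases $o$ is unchanged. With that, sufficiency follows because (i) supplies the extra slack of $2$ exactly when it is needed, and (ii) (together with (i), which implies $o(G-S)\leqslant|S|$ a fortiori) handles every other $S$. For necessity, picking an actual odd-bridge $e$ of the offending component and applying Tutte to $G-e$ forces $o(G-S)+2\leqslant|S|$, giving (i); condition (ii) comes from the existence of a perfect matching in $G$ itself.

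One tiny caveat you might state explicitly: in the necessity direction you deduce that $G$ has a perfect matching from the fact that some $G-e$ does, which tacitly uses $E(G)\neq\emptyset$. For a connected $G$ this excludes only $G=K_1$, for which the biconditional is vacuous or trivially handled, so the point is purely cosmetic.
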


The following result established a sufficient condition for a graph to have a perfect matching via its size.
\begin{lem}[\cite{S.O.}]\label{lem2.09}
Let $n\geqslant10$ be an even integer or $n=4$. If $G$ is an $n$-vertex connected graph with $|E(G)|>$\begin{math}
\left(\begin{smallmatrix}
n-2\\2
\end{smallmatrix} \right)
\end{math}$+2$, then $G$ has a perfect matching. For $n=6$ or $n=8$, if $|E(G)|>9$ or $|E(G)|>18$, respectively, then $G$ has a perfect matching.
\end{lem}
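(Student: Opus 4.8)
\emph{Proof proposal.} The plan is to argue by contradiction through Tutte's $1$-factor theorem, reducing the extremal question to maximizing a single quadratic in one integer variable. Suppose $G$ is a connected graph on an even number $n$ of vertices having no perfect matching. By Tutte's $1$-factor theorem there is a set $S\subseteq V(G)$ with $o(G-S)>|S|$, and since $n$ is even, $|S|$ and $o(G-S)$ share the same parity, so in fact $o(G-S)\geqslant|S|+2$. Write $s=|S|$ and $q=o(G-S)$. Because $G$ is connected and $n$ is even, the empty set cannot serve as such an $S$ (a connected graph of even order has no odd component), whence $s\geqslant1$; together with $q\geqslant s+2$ and the trivial bound $q\leqslant n-s$ we get $1\leqslant s\leqslant \tfrac{n}{2}-1$.

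Next I would bound $|E(G)|$ from above in terms of the coarse structure $(S,\,G-S)$. Counting edges inside $S$, edges between $S$ and $V(G)\setminus S$, and edges inside the components of $G-S$ gives $|E(G)|\leqslant \binom{s}{2}+s(n-s)+\sum_{C}\binom{|C|}{2}$, the last sum running over all components $C$ of $G-S$. To maximize the component term subject to having at least $s+2$ odd components among parts summing to $n-s$, I would first absorb every even component (an even number of vertices) into a largest component, which preserves the number of odd components and strictly increases the sum by convexity of $t\mapsto\binom{t}{2}$; then, for a fixed number $q$ of odd parts, repeatedly transferring two vertices from any part of size $\geqslant 3$ to the largest part keeps all parts odd and again increases the sum, leaving one part of size $n-s-(q-1)$ and $q-1$ singletons. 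As $\binom{n-s-q+1}{2}$ is decreasing in $q$, the choice $q=s+2$ is optimal. Consequently $|E(G)|\leqslant g(s):=\binom{s}{2}+s(n-s)+\binom{n-2s-1}{2}$, with equality exactly for the split graph $K_s\vee\bigl(K_{n-2s-1}\cup(s+1)K_1\bigr)$, which is connected since $s\geqslant1$.

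The crux is then a clean one-variable optimization. Expanding gives $2g(s)=3s^2+(5-2n)s+(n^2-3n+2)$, a convex quadratic in $s$, so its maximum over the interval $1\leqslant s\leqslant \tfrac n2-1$ is attained at an endpoint. A direct computation yields $g(1)=\binom{n-2}{2}+2$ and $g\bigl(\tfrac n2-1\bigr)=\tfrac{3n^2-6n}{8}$, and their difference factors as $g(1)-g\bigl(\tfrac n2-1\bigr)=\tfrac{(n-4)(n-10)}{8}$. Hence for $n=4$ and for $n\geqslant10$ the maximum equals $g(1)=\binom{n-2}{2}+2$, so $|E(G)|>\binom{n-2}{2}+2$ is impossible and $G$ must have a perfect matching; whereas for $n=6$ the maximum is $g(2)=9$ and for $n=8$ it is $g(3)=18$, giving the two exceptional thresholds. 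This also pinpoints the extremal graphs: $K_1\vee(K_{n-3}\cup 2K_1)$ when $s=1$, and $K_{n/2-1}\vee(n/2+1)K_1$ when $s=\tfrac n2-1$.

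I expect the main obstacle to be the reduction in the second paragraph rather than the arithmetic: one must argue carefully that, among all connected graphs with no perfect matching, the edge count really is governed by $g(s)$ — in particular that even components and surplus odd components can be eliminated without decreasing the edge total while respecting the parity constraint $q\geqslant s+2$. Once that reduction is secured, the convexity of the quadratic $2g(s)$ makes the endpoint analysis and the appearance of the distinguished orders $n\in\{4,6,8,10\}$ transparent, the sign of $(n-4)(n-10)$ being exactly what separates the generic bound from the small exceptional cases.
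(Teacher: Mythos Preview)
Your argument is correct and is essentially the approach indicated by the paper: Lemma~2.9 (the contrapositive form of the size bound) is stated there as a by-product of the proof of Lemma~2.09 in \cite{S.O.}, and the paper's own proofs of Theorems~1.1 and~1.3 proceed in exactly the same way---applying the Tutte-type deficiency condition, passing to the split graph $K_s\vee(K_{n-2s-1}\cup(s+1)K_1)$ via convexity (their Lemma~2.8), and then optimizing the resulting quadratic in~$s$ over endpoints. Your computation $g(1)-g(\tfrac{n}{2}-1)=\tfrac{(n-4)(n-10)}{8}$ is precisely what isolates the exceptional orders $n\in\{6,8\}$, and the extremal graphs you name match the ones underlying Lemma~2.9 and Lemma~2.10 of the paper.
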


The next result established a sufficient condition for a graph to have a perfect matching via its spectral radius.
\begin{lem}[\cite{S.O.}]\label{lem2.010}
Let $n\geqslant8$ be an even integer or $n=4$. If $G$ is an $n$-vertex connected graph with $\rho(G)>\theta(n)$, where $\theta(n)$ is the largest root of $x^3-(n-4)x^2-(n-1)x+2(n-4)=0$, then $G$ has a perfect matching. For $n=6$, if $\rho(G)>\frac{1+\sqrt{33}}{2}$, then $G$ has a perfect matching.
\end{lem}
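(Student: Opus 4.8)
The plan is to argue by contradiction, exploiting the equivalence between the absence of a perfect matching and the existence of a Tutte barrier, and then showing that such a barrier caps the spectral radius below the stated threshold. First I would invoke the classical Tutte--Berge formula: if the $n$-vertex graph $G$ (with $n$ even) has no perfect matching, then some $S\subseteq V(G)$ satisfies $o(G-S)>|S|$, and since $o(G-S)\equiv|S|\pmod 2$ for even $n$, in fact $o(G-S)\geqslant|S|+2$. Write $s=|S|$. Because $G$ is connected, removing nothing cannot disconnect it, so $S\neq\emptyset$; together with $o(G-S)\leqslant|V(G)\setminus S|=n-s$ this gives $1\leqslant s\leqslant\frac n2-1$.

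Next I would identify the densest graph compatible with such a barrier, namely $H_s:=K_s\vee(K_{n-2s-1}\cup(s+1)K_1)$, which has $n$ vertices and $o(H_s-S)=s+2$, hence no perfect matching. The aim of this step is the bound $\rho(G)\leqslant\rho(H_s)$. Starting from $G$, I would first complete $G[S]$ to $K_s$, join every vertex of $S$ to every vertex of $V(G)\setminus S$, and complete each component of $G-S$ to a clique; each such operation only adds edges, so $\rho$ does not decrease by Lemma~\ref{lem2.1}, while the component structure of $G-S$, and thus $o(G-S)$, is unchanged. At this stage $G-S$ is a disjoint union of cliques with at least $s+2$ odd components. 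I would then consolidate these cliques into one large clique together with $s+1$ isolated vertices by repeatedly applying the edge-shifting operation of Lemma~\ref{lem2.3}, always moving edges from vertices of smaller Perron weight to those of larger Perron weight (values within a fixed clique being equal by Lemma~\ref{lem2.2}); each shift strictly increases $\rho$, and the end product is exactly $H_s$.

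Finally I would optimize over $s$. Since $H_s$ has the equitable partition $V=S\cup V(K_{n-2s-1})\cup V((s+1)K_1)$, Lemma~\ref{lem2.4} gives $\rho(H_s)=\rho(B_s)$ for the quotient matrix
\[
B_s=\begin{pmatrix}
s-1 & n-2s-1 & s+1\\
s & n-2s-2 & 0\\
s & 0 & 0
\end{pmatrix}.
\]
Its characteristic polynomial is $\phi_s(x)=(x-n+2s+2)\bigl(x^2-(s-1)x-s(s+1)\bigr)-s(n-2s-1)x$; in particular $\phi_1(x)=x^3-(n-4)x^2-(n-1)x+2(n-4)$, so $\rho(H_1)=\theta(n)$, while for $n=6$ one computes $\phi_2(x)=x(x^2-x-8)$, giving $\rho(H_2)=\frac{1+\sqrt{33}}{2}$ with $H_2=K_2\vee 4K_1$. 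It then remains to show that for $n\geqslant8$ (and trivially for $n=4$, where only $s=1$ occurs) the largest root of $\phi_s$ is maximized at $s=1$, whereas for $n=6$ it is maximized at $s=2$. The chain $\rho(G)\leqslant\rho(H_s)\leqslant\theta(n)$ (resp.\ $\leqslant\tfrac{1+\sqrt{33}}{2}$) then contradicts the hypothesis $\rho(G)>\theta(n)$ (resp.\ $>\tfrac{1+\sqrt{33}}{2}$), forcing $G$ to have a perfect matching.

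The hard part will be this last monotonicity claim: proving that the largest root of the cubic $\phi_s$ decreases in $s$ on $1\leqslant s\leqslant\frac n2-1$ for $n\geqslant8$, yet moves to $s=2$ in the single exceptional case $n=6$. I expect to handle it either by treating $s$ as a continuous parameter and differentiating $\phi_s(\rho(H_s))=0$ implicitly — since $\phi_s$ is strictly increasing at its largest root, $\tfrac{d}{ds}\rho(H_s)$ has the opposite sign to $\tfrac{\partial}{\partial s}\phi_s$ evaluated at $\rho(H_s)$, so it suffices to control the sign of that partial derivative — or, equivalently, by evaluating and showing $\phi_s(\theta(n))\geqslant0$ for all $2\leqslant s\leqslant\frac n2-1$ when $n\geqslant8$. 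A secondary technical point is the parity bookkeeping in the consolidation step, namely verifying that the edge-shifts of Lemma~\ref{lem2.3} can be arranged to terminate precisely at $H_s$, with exactly $s+1$ isolated vertices and the required $s+2$ odd components preserved throughout.
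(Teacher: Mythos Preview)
The paper does not prove this lemma: it is quoted from O~\cite{S.O.}, and the paper only remarks that the contrapositive bound (Lemma~\ref{lem2.10}) ``can be obtained in the procedure of the proofs for Lemma~\ref{lem2.010}''. So there is no in-paper argument to compare against. Your outline is precisely O's argument, and it is also the template the present paper adopts for its own results (compare the proof of Theorem~\ref{thm1.4}): Tutte gives a barrier $S$ with $o(G-S)\geqslant|S|+2$, one passes to the extremal $H_s=K_s\vee(K_{n-2s-1}\cup(s+1)K_1)$, computes $\rho(H_s)$ via an equitable quotient, and optimises over $s$.

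On the two technicalities you flag: the consolidation step is cleaner via Lemma~\ref{lem2.8} than via iterated edge-shifts of Lemma~\ref{lem2.3}. Working with a $\rho$-maximal counterexample, one first observes that $G-S$ has no even component (else merge it into an odd one, adding edges while keeping $o(G-S)$ fixed), and then Lemma~\ref{lem2.8} with $k=1$ gives $\rho(G)\leqslant\rho(K_s\vee(K_{n-s-q+1}\cup(q-1)K_1))$; this graph is a subgraph of $H_s$ whenever $q\geqslant s+2$, so Lemma~\ref{lem2.1} finishes the reduction without any parity bookkeeping. For the monotonicity in $s$, the paper's house method (again see Theorem~\ref{thm1.4}) is exactly your second suggestion: show $\phi_s(\theta(n))>0$ for $2\leqslant s\leqslant\tfrac{n}{2}-1$ and $n\geqslant8$ using the crude bound $\theta(n)>n-3$ obtained from $K_{n-2}\subset H_1$, and pin $\theta(n)$ above the second root of $\phi_s$ via Cauchy interlacing (Lemma~\ref{lem2.5}) applied to a symmetrised quotient.
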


The next two lemmas establish upper bounds on the size (resp. the spectral radius) of a graph $G$ under the odd components $o(G-S)$ condition, where $S$ is a subset of $V(G)$. They can be obtained in the procedure of the proofs for Lemma \ref{lem2.09} and Lemma \ref{lem2.010}, respectively. For the details, one may be referred to  \cite{S.O.}. 
\begin{lem}\label{lem2.9}
Let $n$ be an even positive integer. If $G$ is an $n$-vertex connected graph and there exists a subset $S\subseteq V(G)$ such that $o(G-S)\geqslant|S|+2$, then
\begin{wst}
\item[{\rm (i)}]$|E(G)|\leqslant9$ for $n=6$;
\item[{\rm (ii)}]$|E(G)|\leqslant18$ for $n=8$;
\item[{\rm (iii)}]$|E(G)|\leqslant$\begin{math}
\left(\begin{smallmatrix}
n-2\\2
\end{smallmatrix} \right)
\end{math}$+2$ for $n\geqslant10$ or $n=4$.
\end{wst}
\end{lem}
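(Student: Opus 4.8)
The plan is to bound $|E(G)|$ directly by $|S|$ together with the component structure of $G-S$, and then to optimise the resulting quantity over $|S|$. Put $s=|S|$ and let the components of $G-S$ have orders $n_1\geqslant n_2\geqslant\cdots\geqslant n_t$, so that $\sum_{i=1}^t n_i=n-s$ and at least $s+2$ of the $n_i$ are odd; in particular $t\geqslant s+2$. Since $G$ is connected and $n$ is even, $G$ consists of a single even component, so $o(G)=0$ and hence $s\geqslant 1$; combining $s+2\leqslant t\leqslant n-s$ then forces $1\leqslant s\leqslant\frac{n}{2}-1$. Splitting $E(G)$ according to whether an edge lies inside $S$, between $S$ and $G-S$, or inside $G-S$ (the latter lying inside the components), one gets
\[
|E(G)|\leqslant\binom{s}{2}+s(n-s)+\sum_{i=1}^{t}\binom{n_i}{2}.
\]

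Next I would maximise $\sum_{i}\binom{n_i}{2}=\tfrac12\bigl(\sum_i n_i^2-(n-s)\bigr)$ over all admissible sequences $(n_i)$. With $\sum n_i=n-s$ fixed, each $n_i\geqslant 1$, and the requirement that at least $s+2$ parts be odd, $\sum n_i^2$ is largest for the most concentrated admissible partition, namely $s+1$ parts equal to $1$ and a single part equal to $n-2s-1$ (which is odd since $n$ is even, so the configuration is admissible); introducing any extra part, or replacing a pair $(a,b)$ with $a\geqslant b$ by $(a+1,b-1)$, strictly decreases $\sum n_i^2$. Therefore
\[
|E(G)|\leqslant f(s):=\binom{s}{2}+s(n-s)+\binom{n-2s-1}{2}=\tfrac12\bigl(3s^{2}-(2n-5)s+n^{2}-3n+2\bigr),
\]
the last equality being a routine expansion.

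Since $f$ is a quadratic in $s$ with positive leading coefficient $\tfrac32$, on the integer range $1\leqslant s\leqslant\frac{n}{2}-1$ it is maximised at an endpoint, so $|E(G)|\leqslant\max\{f(1),\,f(\tfrac{n}{2}-1)\}$. A direct computation gives $f(1)=\binom{n-2}{2}+2$ and $f(\tfrac{n}{2}-1)=\tfrac{3n(n-2)}{8}$, whence $f(1)-f(\tfrac{n}{2}-1)=\tfrac18(n^{2}-14n+40)=\tfrac18(n-4)(n-10)$. For $n\geqslant 10$ this is nonnegative, so $|E(G)|\leqslant\binom{n-2}{2}+2$, and for $n=4$ only $s=1$ occurs with $f(1)=3=\binom{2}{2}+2$; this proves (iii). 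For the two remaining orders one just evaluates the endpoints: if $n=6$ then $f(1)=8$ and $f(2)=9$, so $|E(G)|\leqslant 9$, giving (i); if $n=8$ then $f(1)=17$ and $f(3)=18$, so $|E(G)|\leqslant 18$, giving (ii).

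The one step demanding genuine care is the middle one, i.e. showing that among all partitions of $n-s$ having at least $s+2$ odd parts, $\sum\binom{n_i}{2}$ is maximised by $(n-2s-1,1,\dots,1)$; this rests on the two monotonicity facts (more parts lowers $\sum n_i^2$, shifting mass toward the largest part raises it) together with the parity bookkeeping that keeps $n-2s-1$ odd so that the extremal configuration is itself admissible. Everything else is elementary algebra and a finite check of the small orders, and the identity for $f(s)$ can be recorded once and reused. The computation parallels that in the proof of Lemma~\ref{lem2.09} in \cite{S.O.}, now extracting the explicit size bounds rather than merely the existence of a perfect matching.
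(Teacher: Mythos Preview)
Your proof is correct and follows the same route the paper invokes from \cite{S.O.}: pass to the extremal configuration $K_s\vee(K_{n-2s-1}\cup(s+1)K_1)$ by maximising $\sum\binom{n_i}{2}$ under the parity constraint, then optimise the resulting convex quadratic $f(s)$ over $1\leqslant s\leqslant\tfrac{n}{2}-1$ by checking endpoints. One slip to fix: you write that replacing $(a,b)$, $a\geqslant b$, by $(a+1,b-1)$ ``strictly decreases $\sum n_i^2$'', but in fact it increases it---as you yourself say correctly in the final paragraph (``shifting mass toward the largest part raises it''); the conclusion is unaffected.
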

\begin{lem}\label{lem2.10}
Let $n$ be an even integer. If $G$ is an $n$-vertex connected graph and there exists a subset $S\subseteq V(G)$ such that $o(G-S)\geqslant|S|+2$, then
\begin{wst}
\item[{\rm (i)}]$\rho(G)\leqslant\frac{1+\sqrt{33}}{2}$ for $n=6$, and $\rho(K_2\vee4K_1)=\frac{1+\sqrt{33}}{2}$;
\item[{\rm (ii)}]$\rho(G)\leqslant\theta(n)$ for $n\geqslant8$ or $n=4$, where $\theta(n)$ is the largest root of $x^3-(n-4)x^2-(n-1)x+2(n-4)=0$, and $\rho(K_1\vee(K_{n-3}\cup2K_1))=\theta(n)$.
\end{wst}
\end{lem}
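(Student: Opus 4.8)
The plan is to follow the method of O~\cite{S.O.}: among all connected $n$-vertex graphs admitting a set $S$ with $o(G-S)\geqslant|S|+2$, I would identify the one maximizing $\rho(G)$ and compute its spectral radius. Throughout write $s=|S|$; since $G$ is connected we have $s\geqslant1$, and since $n$ is even the parity of $o(G-S)$ equals that of $s$, so $o(G-S)\geqslant s+2$ is the first nontrivial bound. \textbf{Reduction to a join of cliques.} First I would fix such an $S$ and pass to the edge-maximal graph compatible with it: complete $G[S]$ to $K_s$, join every vertex of $S$ to every vertex of $V(G)\setminus S$, and complete each component of $G-S$ to a clique. None of these operations merges components of $G-S$, so the resulting graph $H=K_s\vee(K_{n_1}\cup\cdots\cup K_{n_c})$ still satisfies $o(H-S)=o(G-S)\geqslant s+2$, and Lemma~\ref{lem2.1} gives $\rho(G)\leqslant\rho(H)$, with equality only if $G=H$.

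\textbf{Concentrating into one clique plus isolated vertices.} Next I would transform $H$ into $H_{n,s}:=K_s\vee(K_{n-2s-1}\cup(s+1)K_1)$ through a chain of $\rho$-nondecreasing steps, combining Lemma~\ref{lem2.1} and Lemma~\ref{lem2.3}. The goal is to push all vertices of $G-S$ into a single odd clique while leaving exactly $s+1$ isolated vertices: an even component is absorbed into an odd clique (an edge addition, so $\rho$ strictly increases by Lemma~\ref{lem2.1} while the odd-component count is unchanged), and vertices are shifted in pairs from smaller cliques toward the largest one by Lemma~\ref{lem2.3}, whose hypothesis $x_u\geqslant x_v$ holds because the Perron entries are larger on the denser clique. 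The delicate point here is purely parity bookkeeping: because $n$ is even, moving vertices two at a time keeps every clique odd, so the endpoint $H_{n,s}$ has exactly $s+2$ odd components (and again satisfies the hypothesis), yielding $\rho(G)\leqslant\rho(H_{n,s})$.

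\textbf{Computing $\rho(H_{n,s})$.} The partition of $H_{n,s}$ into $S$, the big clique, and the isolated vertices is equitable, so by Lemma~\ref{lem2.4} the value $\rho(H_{n,s})$ is the largest root of the characteristic polynomial of the quotient matrix
\[
B_s=\begin{pmatrix} s-1 & n-2s-1 & s+1\\ s & n-2s-2 & 0\\ s & 0 & 0\end{pmatrix},
\]
which I compute to be
\[
\phi_s(x)=x^3-(n-s-3)x^2-(s^2+n-2)x+s(s+1)(n-2s-2).
\]
Specializing, $\phi_1(x)=x^3-(n-4)x^2-(n-1)x+2(n-4)$, so $\rho(H_{n,1})=\theta(n)$; and for $n=6$ one gets $\phi_2(x)=x(x^2-x-8)$, so $\rho(H_{6,2})=\tfrac{1+\sqrt{33}}{2}$. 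These two identities already supply the equalities asserted in the lemma.

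\textbf{Optimizing over $s$ (the main obstacle).} Finally I would show that the largest root of $\phi_s$ is maximized at $s=1$ when $n\geqslant8$ (and $n=4$), and at $s=2$ when $n=6$. Since each $\phi_s$ is monic, it suffices to control the sign of $\phi_s(\theta(n))$ for $2\leqslant s\leqslant(n-2)/2$, and the difference simplifies to
\[
\phi_s(x)-\phi_1(x)=(s-1)\bigl(x^2-(s+1)x\bigr)+s(s+1)(n-2s-2)-2(n-4).
\]
The plan is to evaluate this at $x=\theta(n)$ using the bounds $n-3<\theta(n)<n-2$, obtained by checking the signs of $\phi_1$ at $n-3$ and $n-2$. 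For $n\geqslant8$ the term $(s-1)\theta(n)\bigl(\theta(n)-s-1\bigr)$ is positive (here $\theta(n)>n-3\geqslant n/2\geqslant s+1$) and dominates the linear correction $-2(n-4)$, forcing $\phi_s(\theta(n))>0$ and hence $\rho(H_{n,s})<\theta(n)$; for $n=6$ the very same computation reverses sign at $s=2$, so $\rho(H_{6,2})>\theta(6)$ and $K_2\vee4K_1$ takes over. This uniform sign analysis across all admissible $s$, together with isolating precisely the small-order regime $n=6$ where the maximizer jumps from $s=1$ to $s=2$, is the genuinely delicate part of the argument; the remaining inequalities are routine once the bounds on $\theta(n)$ are in hand.
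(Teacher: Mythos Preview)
Your plan follows exactly the method of O~\cite{S.O.}, which is also all the paper does here: it gives no independent proof of Lemma~\ref{lem2.10} but simply states that the bound ``can be obtained in the procedure of the proofs'' of O's theorem and refers the reader to \cite{S.O.}. So there is nothing in the paper to compare against beyond that citation; your reconstruction is the intended argument.

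There is, however, one genuine gap in your final step. From $\phi_s(\theta(n))>0$ alone you \emph{cannot} conclude $\rho(H_{n,s})<\theta(n)$: a monic cubic is also positive at any point lying between its two smaller roots. You must additionally verify that $\theta(n)$ exceeds the middle root of $\phi_s$. This is precisely the issue the paper confronts later (in the proof of Theorem~\ref{thm1.4}, for the analogous polynomial) and resolves via Lemma~\ref{lem2.5}: symmetrize $B_s$ as $D^{1/2}B_sD^{-1/2}$ with $D=\mathrm{diag}(s,\,n-2s-1,\,s+1)$; the principal submatrix on rows and columns $2,3$ is $\mathrm{diag}(n-2s-2,\,0)$, so Cauchy interlacing gives $\lambda_2(B_s)\leqslant n-2s-2<n-3<\theta(n)$ for every $s\geqslant1$. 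With that bound in hand, $\phi_s(\theta(n))>0$ does force $\rho(H_{n,s})<\theta(n)$. This step should be made explicit.

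A minor remark on the reduction: your ``shift vertices in pairs via Lemma~\ref{lem2.3}'' can be made to work but is awkward to write down precisely. The cleaner route, used throughout the paper, is the Rayleigh-quotient inequality of Lemma~\ref{lem2.8}, which gives $\rho(K_s\vee(K_{n_1}\cup\cdots\cup K_{n_q}))\leqslant\rho(K_s\vee(K_{n-s-q+1}\cup(q-1)K_1))$ in one stroke; then absorbing $q-s-2$ of the isolated vertices into the large clique (a subgraph inclusion, hence Lemma~\ref{lem2.1}) lands you at $H_{n,s}$ without any parity bookkeeping.
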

Next, we compare the size and the spectral radius of $K_s\vee(H\cup K_{n_1}\cup K_{n_2}\cup\ldots\cup K_{n_t})$ ($n_i\geqslant k $ for $i\in\{1,\ldots,t\}$) with those of $K_s\vee(H\cup K_{n-m-s-k(t-1)}\cup (t-1)K_k),$ where $n_1, n_2,\ldots, n_t,s,k$ are positive integers, $H$ is a graph on $m$ vertices, and $n=\sum_{i=1}^tn_i+s+m$.
\begin{lem}\label{lem2.8}
Let $K_s\vee(H\cup K_{n_1}\cup K_{n_2}\cup\cdots\cup K_{n_t})$ and $K_s\vee(H\cup K_{n-m-s-k(t-1)}\cup (t-1)K_k)$ be the graphs defined as above. Assume $n_1=\max\{n_1,n_2,\ldots,n_t\}.$ Then
\[\label{eq:2.001}
  |E(K_s\vee(H\cup K_{n_1}\cup K_{n_2}\cup\cdots\cup K_{n_t}))|\leqslant|E(K_s\vee(H\cup K_{n-m-s-k(t-1)}\cup (t-1)K_k))|
\]
and
\[\label{eq:2.002}
  \rho(K_s\vee(H\cup K_{n_1}\cup K_{n_2}\cup\cdots\cup K_{n_t}))\leqslant\rho(K_s\vee(H\cup K_{n-m-s-k(t-1)}\cup (t-1)K_k)).
\]
Both equalities hold if and only if $n_2=n_3=\cdots= n_t=k$.
\end{lem}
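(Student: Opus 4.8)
The plan is to prove both inequalities by an inductive / iterative "merging" argument: it suffices to show that replacing two clique components $K_{n_i}$ and $K_{n_j}$ (with $n_i \geqslant n_j > k$) by $K_{n_i+n_j-k}$ and $K_k$ strictly increases the size and the spectral radius. Iterating this operation starting from $K_s\vee(H\cup K_{n_1}\cup\cdots\cup K_{n_t})$, and always merging into the largest part, terminates exactly at $K_s\vee(H\cup K_{n-m-s-k(t-1)}\cup(t-1)K_k)$, since each step strictly decreases $\sum_i n_i^2$ while preserving $\sum_i n_i$ and the number of components; and if $n_2=\cdots=n_t=k$ no step is possible, giving the equality case. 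So the whole lemma reduces to the two-component statement, which is where the real work is.

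For \eqref{eq:2.001}, the size computation is a direct count: adding in $H$, the join edges $s(m+\sum n_i)$, the edges inside $K_s$, and $\binom{n_i}{2}+\binom{n_j}{2}$ from the two cliques in question. Replacing $n_i,n_j$ by $n_i+n_j-k,k$ changes only the last term, and $\binom{n_i+n_j-k}{2}+\binom{k}{2}-\binom{n_i}{2}-\binom{n_j}{2}=(n_i-k)(n_j-k)>0$ whenever $n_j>k$, which is exactly the strict inequality; equality forces $n_j=k$. Iterating and keeping track of which component stays largest (so $n_1$ absorbs everything) gives the claimed extremal graph, and equality throughout forces $n_2=\cdots=n_t=k$.

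For \eqref{eq:2.002}, I would use Lemma \ref{lem2.3} (the edge-shifting lemma). Write $G=K_s\vee(H\cup K_{n_i}\cup K_{n_j}\cup R)$ where $R$ collects the remaining components, and let $\mathbf{x}$ be the Perron vector of $A(G)$. By Lemma \ref{lem2.2}, all vertices of $K_{n_i}$ share a common value $x$ and all vertices of $K_{n_j}$ share a common value $y$; comparing the eigenvalue equations at such vertices, $\rho\,x = (n_i-1)x + s\langle\text{join part}\rangle$ and $\rho\,y=(n_j-1)y+s\langle\text{same join part}\rangle$, and since $n_i\geqslant n_j$ one deduces $x\geqslant y$ (solve for the common join-term and compare). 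Now pick a vertex $u$ in $K_{n_i}$ and a vertex $v$ in $K_{n_j}$; move all edges from $v$ to the other $n_j-1$ vertices of $K_{n_j}$ over to $u$. The resulting graph is precisely $K_s\vee(H\cup K_{n_i+n_j-k}\cup K_k\cup R)$ (with $v$ and its $k-1$ partners — wait, more carefully: $v$ together with $k-1$ chosen vertices stays as a $K_k$, and the remaining $n_j-k$ vertices of $K_{n_j}$ get absorbed into $K_{n_i}$; one performs the shift for each of those $n_j-k$ vertices in turn, each time with a "heavier" endpoint by Lemma \ref{lem2.2}–\ref{lem2.3}). Since $x\geqslant y$ at each shift and at least one genuine edge is moved (as $n_j>k$), Lemma \ref{lem2.3} yields the strict inequality $\rho(G)<\rho(G^\ast)$. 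If $n_j=k$ there is nothing to move and the graphs coincide.

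The main obstacle is the bookkeeping in \eqref{eq:2.002}: verifying that the composite sequence of single-edge shifts genuinely realizes the clique-merging transformation, and that the Perron-weight comparison $x\geqslant y$ persists after each intermediate shift (it does, because the intermediate graphs retain the relevant $N_G(v_i)\setminus\{v_j\}=N_G(v_j)\setminus\{v_i\}$ symmetry among the not-yet-moved vertices, so Lemma \ref{lem2.2} keeps applying, and the clique being enlarged only gets "heavier"). Once the two-component case is nailed down, the iteration and the characterization of equality are routine from the strict monotonicity of $\sum_i n_i^2$.
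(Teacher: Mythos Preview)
Your overall strategy (iterative merging, reducing to a two-component comparison) and your size argument are correct and match the paper's approach; the identity $\binom{n_i+n_j-k}{2}+\binom{k}{2}-\binom{n_i}{2}-\binom{n_j}{2}=(n_i-k)(n_j-k)$ is exactly the right computation.

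The spectral-radius part, however, has a genuine gap. Lemma~\ref{lem2.3} shifts edges incident to a fixed vertex $v$ over to a fixed vertex $u$; it does \emph{not} realize the operation ``move a vertex from $K_{n_j}$ into $K_{n_i}$''. Concretely, if $u\in V(K_{n_i})$ and $v\in V(K_{n_j})$ and you move every edge $vw$ with $w\in V(K_{n_j})\setminus\{v\}$ to $uw$, the resulting graph has $v$ isolated inside the non-join part while $u$ becomes a cut-vertex joining $K_{n_i}$ to $K_{n_j}\setminus\{v\}$; the other vertices of $K_{n_i}$ acquire \emph{no} new neighbours. This graph is not a subgraph of $K_s\vee(H\cup K_{n_i+n_j-k}\cup K_k\cup R)$ (the surviving edges between $K_{n_j}\setminus\{v\}$ and the designated $K_k$-part obstruct this), so you cannot finish with Lemma~\ref{lem2.1}. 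Your proposed fix --- performing a sequence of single-edge shifts and invoking persistent symmetry --- does not survive scrutiny either: after the very first shift the neighbourhoods of the ``used'' vertices $y_1\in K_{n_i}$ and $z_1\in K_{n_j}$ differ from those of their former twins, the eigen-equations for the remaining vertices now involve the new, asymmetric entries, and the inequality $x_y\geqslant x_z$ for the \emph{new} Perron vector is no longer a consequence of Lemma~\ref{lem2.2} alone.

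The paper sidesteps all of this by using the Rayleigh quotient directly for a single one-vertex move. With $\tilde{G}$ obtained from $G$ by deleting the $n_i-1$ edges from one vertex of $K_{n_i}$ and adding the $n_1$ edges from that vertex to $K_{n_1}$, and with ${\bf x}$ the Perron vector of $G$, one has
\[
\rho(\tilde{G})-\rho(G)\ \geqslant\ {\bf x}^{T}\bigl(A(\tilde{G})-A(G)\bigr){\bf x}
\ =\ 2n_1x_1x_i-2(n_i-1)x_i^{2}\ \geqslant\ 2x_i^{2}(n_1-n_i+1)\ >\ 0,
\]
using only $x_1\geqslant x_i$ (which you already derived). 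This single inequality, iterated until every $n_i$ with $i\geqslant 2$ has been reduced to $k$, gives \eqref{eq:2.002} with the stated equality case, and avoids any intermediate bookkeeping. Replace your Lemma~\ref{lem2.3} argument by this Rayleigh-quotient step and the proof goes through.
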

\begin{proof}
Clearly, $K_s\vee(H\cup K_{n_1}\cup K_{n_2}\cup\cdots\cup K_{n_t})\cong K_s\vee(H\cup K_{n-s-m-k(t-1)}\cup (t-1)K_k)$ if $n_2=n_3=\cdots =n_t=k$. Thus, it suffices to show that the strict inequality in \eqref{eq:2.001} (resp. \eqref{eq:2.002}) holds if there exists $i\in\{2,3,\ldots,t\}$ such that $n_i\geqslant k+1$.

Assume that $n_i\geqslant k+1$ for some $i\in \{2,\ldots, t\}$. Then let $$\tilde{G}=K_s\vee(H\cup K_{n_1+1}\cup\cdots\cup K_{n_{i-1}}\cup\cdots\cup K_{n_t}).$$
On the one hand, one has
\begin{align*}
|E(\tilde{G})|=&|E(G)|-(n_i-1)+n_1
        =|E(G)|+(n_1-n_i)+1
        >|E(G)|.
\end{align*}
Thus, $|E(G)|<|E(\tilde{G})|$. This completes the proof of the first part.

On the other hand, assume that ${\bf x}=(x_1,x_2,\ldots,x_n)^T$ is the Perron vector of $A(G)$, and let $x_j$ denote the entry of ${\bf x}$ corresponding to the vertex $v_j\in V (G)$. By Lemma \ref{lem2.2}, one has $x_r=x_s$ for all $v_r, v_s$ in $V(K_s)$ (resp. $V(K_{n_j}), {j\in\{1,2,\ldots,t\}}$). For convenience, let $x_0 =x_r$ for all $v_r\in V(K_s)$, $x_j =x_r$ for all $v_r\in V(K_{n_j})$, $j\in\{1,2,\ldots,t\}$. Then
$$
\left\{
  \begin{array}{ll}
    \rho(G)x_1=sx_0+(n_1-1)x_1, \\[5pt]
    \rho(G)x_i=sx_0+(n_i-1)x_i.
  \end{array}
\right.
$$
Thus, $x_1\geqslant x_i$. By the Rayleigh quotient, we have
\begin{align*}
\rho(\tilde{G})-\rho(G)\geqslant& {\bf x}^T(A(\tilde{G})-A(G)){\bf x}
                 =2n_1x_1x_i-2(n_i-1)x_i^2
                 \geqslant 2x_i^2(n_1-n_i+1)
                 >0.
\end{align*}
Therefore, $\rho(G)<\rho(\tilde{G})$. This completes the proof of the second part.
\end{proof}

Let $K_p+K_q$ denote the graph obtained from $K_p\cup K_q$ by adding an edge connecting one vertex in $K_p$ and one in $K_q$. We close this section by showing the following lemma.

\begin{lem}\label{lem2.11}Given an even positive integer $l$, let $\mathcal{G}=\{K_p+K_q: p+q=l,\  p,\ q$ are odd and positive$\}$.
\begin{wst}
\item[{\rm (i)}]For all $G\in\mathcal{G}$, one has $|E(G)|\leqslant|E(K_{l-1}^+)|$ and $\rho(G)\leqslant\rho(K_{l-1}^+)$, both equalities hold if and only if $G=K_{l-1}^+$;
\item[{\rm (ii)}]For all $G\in\mathcal{G}\setminus\{K_{l-1}^+\}$, one has $|E(G)|\leqslant|E(K_3+K_{l-3})|$ and $\rho(G)\leqslant\rho(K_3+K_{l-3})$, both equalities hold if and only if $G=K_3+K_{l-3}$.
\end{wst}
\end{lem}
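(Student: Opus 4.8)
\textbf{Proof proposal for Lemma \ref{lem2.11}.}

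The plan is to treat the two comparisons (size and spectral radius) separately but along parallel lines, exploiting the fact that every member of $\mathcal{G}$ is a ``dumbbell'' $K_p + K_q$ with $p+q=l$ fixed and $p,q$ odd. For part (i), I would first handle the size. Writing $|E(K_p+K_q)| = \binom{p}{2}+\binom{q}{2}+1$, I want to show this is maximized over odd $p,q$ with $p+q=l$ at the split $\{1,l-1\}$, which yields exactly $\binom{l-1}{2}+1 = |E(K_{l-1}^+)|$. This is a convexity statement: the function $p\mapsto \binom{p}{2}+\binom{l-p}{2}$ is convex on $[1,l-1]$, hence its maximum over the admissible discrete set $\{1,3,5,\dots,l-1\}$ is attained at an endpoint, i.e.\ at $p=1$ or $p=l-1$, both giving $K_{l-1}^+$; any intermediate odd split is strictly smaller, giving the equality characterization. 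For the spectral radius in part (i), I would start from the observation that for any $G=K_p+K_q \in \mathcal{G}$ with $3 \le p \le q \le l-3$, the graph $K_{l-1}^+$ is \emph{not} literally a subgraph (so Lemma \ref{lem2.1} does not apply directly), and instead use the edge-shifting operation of Lemma \ref{lem2.3}: let $\mathbf{x}$ be the Perron vector of $G$, let $u$ be the vertex of the smaller clique $K_p$ incident to the bridge, and move edges from within $K_p$ onto the larger side. More cleanly, I would argue by the Lemma \ref{lem2.3} transformation directly transforming $K_p+K_q$ into $K_{p-2}^{\,\prime}$-type configurations that strictly increase $\rho$ until the small clique shrinks to a single vertex; each step replaces an edge $vw$ inside $K_p$ (with $v$ a suitably chosen low-weight vertex of $K_p$) by an edge $uw$ to a vertex $u$ of $K_q$ with $x_u \ge x_v$, which is legitimate because vertices of the larger clique carry at least as much Perron weight as vertices of the smaller clique (this last inequality follows from the eigenvalue equations exactly as in the proof of Lemma \ref{lem2.8}: comparing $\rho x_u = (q-1)x_u + x_{u'}$-type relations).

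For part (ii), the admissible set shrinks to odd splits with $p,q \ge 3$, i.e.\ $p \in \{3,5,\dots,l-3\}$. By the same convexity argument as above, $\binom{p}{2}+\binom{l-p}{2}$ is maximized on this restricted set again at the endpoints $p=3$ or $p=l-3$, both of which give the graph $K_3+K_{l-3}$ (isomorphic by symmetry), establishing the size inequality and its equality case. For the spectral radius, I would again apply the Lemma \ref{lem2.3} shifting: given any $K_p+K_q$ with $5 \le p \le q \le l-5$, shift edges out of the smaller clique $K_p$ onto the larger clique $K_q$ — using $x_u \ge x_v$ for $u \in V(K_q)$, $v \in V(K_p)$ as before — until $p$ drops to $3$, with each shift strictly increasing $\rho$ and preserving membership in $\mathcal{G}\setminus\{K_{l-1}^+\}$ (since $q$ only grows and stays $\le l-3$). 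This yields $\rho(G) < \rho(K_3+K_{l-3})$ whenever $G \ne K_3+K_{l-3}$.

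The main obstacle I anticipate is the spectral-radius direction, specifically making the edge-shifting argument airtight: I must (a) verify that the Perron-weight comparison $x_u \ge x_v$ genuinely holds, which requires writing out the equitable-partition eigenvalue equations for the dumbbell $K_p+K_q$ and checking monotonicity of the clique-vertex weights in the clique size (the bridge endpoints have slightly different weights from the other clique vertices, so one must be a little careful about which vertex $v$ in $K_p$ to pick — ideally a non-bridge vertex of $K_p$, whose neighborhood inside $K_p$ is being thinned), and (b) confirm that after the shift the resulting graph is still a member of $\mathcal{G}$ (resp.\ $\mathcal{G}\setminus\{K_{l-1}^+\}$), i.e.\ still a dumbbell with both parts odd; since a single application of Lemma \ref{lem2.3} as described moves one edge and need not land on a dumbbell, the cleanest route is probably to compare $\rho$ of the relevant quotient matrices directly: each dumbbell $K_p+K_q$ has an equitable partition into four cells (bridge vertex of $K_p$, rest of $K_p$, bridge vertex of $K_q$, rest of $K_q$), so by Lemma \ref{lem2.4} its spectral radius is the largest root of an explicit $4\times 4$ characteristic polynomial in $p$ and $q$; then I would show this largest root is (strictly, unless $p=1$) increasing as the split becomes more unbalanced, via a standard ``if $\rho$ were $\ge$ the target value then the quotient-matrix polynomial would have the wrong sign'' argument of the type used in Lemmas \ref{lem2.09}–\ref{lem2.010}. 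That polynomial-sign computation is the one genuinely technical step, but it is a finite calculation with no conceptual difficulty.
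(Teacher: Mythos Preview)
Your proposal is essentially correct and follows the same overall strategy as the paper: direct counting/convexity for the size inequalities, and for the spectral radius a combination of edge-shifting (Lemma~\ref{lem2.3}) and quotient-matrix polynomial comparison (Lemma~\ref{lem2.4}).

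For part~(i), however, the paper's execution is considerably cleaner than what you sketch. You worry about verifying the Perron-weight inequality $x_u\geqslant x_v$ from the eigenvalue equations and about iterating the shift so as to stay inside $\mathcal{G}$; the paper sidesteps both issues. It takes $u,v$ to be the two bridge endpoints of $K_p+K_q$ and, by the symmetry $K_p+K_q\cong K_q+K_p$, assumes \emph{without loss of generality} that $x_u\geqslant x_v$ --- no computation needed. Then a \emph{single} application of Lemma~\ref{lem2.3}, shifting all of $N_G(v)\setminus\{u\}$ over to $u$, produces a graph $G'$ that is a proper subgraph of $K_{l-1}^+$ (with $v$ as the pendant), and Lemma~\ref{lem2.1} finishes. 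No iteration, no need to land back in $\mathcal{G}$, no polynomial analysis for this part.

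For part~(ii), the paper takes exactly the route you describe as your fallback: it writes the $4\times4$ quotient matrix for $K_p+K_q$, computes its characteristic polynomial $\Phi_1$, specializes to $q=3$ to get $\Phi_2$, and shows
\[
\Phi_2(\rho(G))-\Phi_1(\rho(G))=-\rho(G)(q-3)(l-q-3)(\rho(G)+2)<0,
\]
which immediately gives $\rho(G)<\rho(K_3+K_{l-3})$ for $p,q\geqslant5$. So the ``polynomial-sign computation'' you anticipate is in fact a one-line factorization rather than a delicate estimate.
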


\begin{proof}\ \
(i)\  Assume that $G\in\mathcal{G}$. It is clear that $G=K_{l-1}^+$ if $p=1$ or $q=1$. Next, we are to prove $|E(G)|<|E(K_{l-1}^+)|$ and $\rho(G)<\rho(K_{l-1}^+)$ for $p,\ q\geqslant3$.

On the one hand,
\begin{align*}
|E(K_{l-1}^+)|=&|E(G)|+p(q-1)-(q-1)
              =|E(G)|+(p-1)(q-1)
              >|E(G)|,
\end{align*}
as desired.

On the other hand, let $u$ and $v$ denote the two vertices in $G$ whose degrees are $q$ and $p$, respectively. Let ${\bf x}$ be the Perron vector of $A(G)$. Without loss of generality, we assume $x_u\geqslant x_v$. Then we construct a new graph $G'$ as
$$
G'=G-\sum_{w\in N_G(v)\setminus\{u\}}wv+\sum_{w\in N_G(v)\setminus\{u\}}uw.
$$
By Lemma \ref{lem2.3}, we have $\rho(G)<\rho(G')$. Obviously, $G'$ is a proper subgraph of $K_{l-1}^+$, by Lemma \ref{lem2.1}, we have $\rho(G)<\rho(K_{l-1}^+)$, as desired.

(ii) \  Assume $G\in\mathcal{G}\setminus\{K_{l-1}^+\}$, then $p,\ q\geqslant3$. Clearly, $G=K_3+K_{l-3}$ if $p=3$ or $q=3$. In what follows, we shall show $|E(G)|<|E(K_3+K_{l-3})|$ and $\rho(G)<\rho(K_3+K_{l-3})$ for $p,\ q\geqslant5$.

On the one hand,
\begin{align*}
|E(K_3+K_{l-3})|=&|E(G)|+p(q-3)-3(q-3)
              =|E(G)|+(p-3)(q-3)
              >|E(G)|,
\end{align*}
as desired.

On the other hand, let $u$ and $v$ denote the two vertices in $G$ whose vertex degrees are $q$ and $p$, respectively. The quotient matrix of $A(G)$ corresponding to the partition $V(G)=V(K_{q-1})\cup \{u\}\cup \{v\}\cup V(K_{p-1})$ is given as
$$
A=\left(
  \begin{array}{cccc}
   q-2 & 1 & 0 & 0 \\
   q-1 & 0 & 1 & 0 \\
    0 & 1 & 0 & l-q-1\\
    0 & 0 & 1 & l-q-2\\
  \end{array}
\right),
$$
whose characteristic polynomial is
\[\label{eq:2.01}
\Phi_1(x)=x^4+(4-l)x^3+(lq-q^2-3l+5)x^2+(2lq-2q^2-2l)x+l-3.
\]
Substituting $q=3$ into \eqref{eq:2.01} gives the characteristic polynomial of the quotient matrix of $K_3+K_{l-3},$ via the partition $V(K_3+K_{l-3})=V(K_2)\cup \{u\}\cup \{v\}\cup V(K_{{l-4}})$, as
\[\label{eq:2.1}
\Phi_2(x)=x^4+(4-l)x^3-4x^2+(4l-18)x+l-3.
\]
Clearly, the vertex partition is equitable. Hence, by Lemma \ref{lem2.4}, the largest roots of $\Phi_1(x)=0$ and $\Phi_2(x)=0$ equal $\rho(G)$ and $\rho(K_3+K_{l-3})$, respectively.
Plugging the value $\rho(G)$ into $x$ of $\Phi_2(x)-\Phi_1(x)$ gives us
$$
\Phi_2(\rho(G))-\Phi_1(\rho(G))=-\rho(G)(q-3)(l-q-3)(\rho(G)+2)<0.
$$
Note that $\Phi_1(\rho(G))=0$. Thus $\Phi_2(\rho(G))<0$ and so $\rho(G)<\rho(K_3+K_{l-3})$ for $p,\ q\geqslant5$.

This completes the proofs of (i) and (ii).
\end{proof}
\section{\normalsize Proof of Theorems \ref{thm1.1} and \ref{thm1.3}}\setcounter{equation}{0}
In this section, we give the proof of Theorems \ref{thm1.1} and \ref{thm1.3}, in which the former gives a sufficient condition via the size of a connected graph to ensure that the graph is $k$-extendable, the latter presents a sufficient condition via size to determine that a graph is $1$-excludable.

\begin{proof}[\bf Proof of Theorem \ref{thm1.1}]
By Lemma \ref{lem2.09}, we obtain that $G$ has a perfect matching. Thus, $G$ must contain $k$ independent edges. Suppose to the contrary that $G$ is not $k$-extendable. By Lemma \ref{lem2.6}, there exists a vertex subset $S\subseteq V(G)$ such that $G[S]$ contains $k$ independent edges and $o(G-S)>|S|-2k$. Since $n$ is even, we have $o(G-S)\equiv|S|\pmod 2$. Thus $o(G-S)\geqslant|S|-2k+2$ and $|S|\geqslant2k$. Choose such a connected graph $G$ such that its size is as large as possible.

According to the choice of $G$, the induced subgraph $G[S]$ and each connected component of $G-S$ are complete graphs, respectively. Furthermore, all components of $G-S$ are odd.

For convenience, let $o(G-S)=q$ and $|S|=s$, then $n\geqslant2s-2k+2$. Assume that $G_1,G_2,\ldots,G_q$ are all the components of $G-S$ and let {$n_i=|V(G_i)|$ for $i=1,\ldots,q.$ For convenience, let $n_1\geqslant n_2\geqslant\cdots\geqslant n_q\geqslant1$}. Then, $G=K_s\vee(K_{n_1}\cup K_{n_2}\cup\cdots\cup K_{n_q})$. By Lemma \ref{lem2.8}, we have $|E(G)|\leqslant|E(K_s\vee(K_{n-s-q+1}\cup (q-1)K_1))|$ with equality if and only if $G=K_s\vee(K_{n-s-q+1}\cup (q-1)K_1)$. Note that $o(K_s\vee(K_{n-s-q+1}\cup (q-1)K_1)-V(K_s))=o(G-S)\geqslant s-2k+2$. According to the choice of $G$, we obtain $G=K_s\vee(K_{n-s-q+1}\cup (q-1)K_1)$.

Notice that $q\geqslant s-2k+2$. It is clear that $K_s\vee(K_{n-s-q+1}\cup (q-1)K_1)$ is a subgraph of $K_s\vee(K_{n-2s+2k-1}\cup(s-2k+1)K_1)$ and
$o(K_s\vee(K_{n-2s+2k-1}\cup(s-2k+1)K_1)-V(K_s))=s-2k+2$. According to the choice of $G$, we have $G=K_s\vee(K_{n-2s+2k-1}\cup(s-2k+1)K_1)$. Thus $|E(G)|=s(s-2k+1)+$ \begin{math}
\left(\begin{smallmatrix}
n-s+2k-1\\2
\end{smallmatrix} \right)
\end{math}. In what follows, we are to prove $|E(G)|\leqslant2k+$\begin{math}
\left(\begin{smallmatrix}
n-1\\2
\end{smallmatrix} \right)
\end{math}. By a direct computation, we obtain
\begin{align*}
\dbinom{n-1}{2}+2k-|E(G)|=&\dbinom{n-1}{2}+2k-\dbinom{n-s+2k-1}{2}-s(s-2k+1)\\
                         =&\frac{(s-2k)(2k+2n-3s-5)}{2}.
\end{align*}
Obviously, $|E(G)|=2k+$\begin{math}
\left(\begin{smallmatrix}
n-1\\2
\end{smallmatrix} \right)
\end{math} if $s=2k$. So it suffices to prove $2k+2n-3s-5\geqslant0$ for $s\geqslant2k+1$.
Recall that $n\geqslant2s-2k+2$, we have
$$
2n+2k-3s-5\geqslant s-2k-1\geqslant0.
$$
Note that $2n+2k-3s-5=0$ if and only if $s=2k+1$ and $n=2k+4$.
Thus, $|E(G)|\leqslant2k+$\begin{math}
\left(\begin{smallmatrix}
n-1\\2
\end{smallmatrix} \right)
\end{math} with equality if and only if $s=2k$ or $s=2k+1$ and $n=2k+4$ (i.e., $G=K_{2k}\vee(K_{n-2k-1}\cup K_1)$ or $G=K_{2k+1}\vee3K_1$), a contradiction.

This completes the proof.
\end{proof}

\begin{proof}[\bf Proof of Theorem \ref{thm1.3}]
Assume that $e$ is an edge of $G$. Suppose to the contrary that $G$ has no perfect matching excluding $e$. By Lemma \ref{lem2.7}, there exists a subset $S\subseteq V(G)$ such that the following statements {hold}:
\begin{wst}
\item[{\rm (1)}] If $G-S$ has a component containing an odd-bridge, then $o(G-S)>|S|-2$;
\item[{\rm (2)}] $o(G-S)>|S|$, otherwise.
\end{wst}

Choose such a connected graph $G$ so that its size is as large as possible. We proceed by considering {the} following two possible cases.

{\bf Case 1.} $G-S$ has no component containing an odd-bridge.

Note that $n$ is even and $o(G-S)\equiv|S|\pmod 2$. Thus, $o(G-S)\geqslant|S|+2$. By {Lemma} \ref{lem2.9}, we may easily get contradictions.

{\bf Case 2.} $G-S$ has a component containing an odd-bridge $e=uv$, say $\hat{G_1}$.

Note that $o(G-S)\equiv|S|\pmod 2$. Thus, $o(G-S)\geqslant|S|$.
Firstly, we assume $S=\emptyset$. Then $o(G)=o(G-S)\geqslant0$. Since $G$ is connected and $n$ is even, we have $o(G)=0$, and $G=\hat{G_1}$. Let $n_u$ and $n_v$ denote the number of vertices of connected components containing vertices $u$ and $v$ in $G-e$, respectively. Since $\delta(G)\geqslant2$, we have $n_u,\ n_v\geqslant3$. According to the choice of $G$, we may obtain $G=K_{n_u}+K_{n_v}$. Next, we proceed by showing the following fact.
\begin{fact}\label{fact1}
Assume $G$ is the graph described above.
\begin{wst}
\item[\rm (a)] If $n=6$, then $|E(G)|<10$;
\item[\rm (b)] If $n=8$, then $|E(G)|<19$;
\item[\rm (c)] If $n\geqslant10$, then $|E(G)|<$\begin{math}
\left(\begin{smallmatrix}
n-2\\2
\end{smallmatrix} \right)
\end{math}$+3$.
\end{wst}
\end{fact}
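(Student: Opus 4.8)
The plan is to recall that we have reduced to the situation $G = K_{n_u} + K_{n_v}$ where $n_u + n_v = n$, both $n_u, n_v$ are odd, and $n_u, n_v \geqslant 3$; we must bound $|E(G)|$ from above. Since $G$ belongs to the family $\mathcal{G}$ of Lemma~\ref{lem2.11} with $l = n$, part~(i) of that lemma gives $|E(G)| \leqslant |E(K_{n-1}^+)|$, but here $K_{n-1}^+$ itself is excluded because it has a pendant vertex (indeed $K_{n-1}^+ = K_1 + K_{n-1}$ needs one part of size $1$, contradicting $n_u, n_v \geqslant 3$). Hence $G \in \mathcal{G} \setminus \{K_{n-1}^+\}$, and part~(ii) of Lemma~\ref{lem2.11} yields
$$
|E(G)| \leqslant |E(K_3 + K_{n-3})| = \binom{3}{2} + \binom{n-3}{2} + 1 = \binom{n-3}{2} + 4.
$$
So the whole fact comes down to comparing $\binom{n-3}{2} + 4$ with the three target thresholds.

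For the three regimes I would just compute. When $n = 6$: $\binom{3}{2} + 4 = 7 < 10$, giving (a). When $n = 8$: $\binom{5}{2} + 4 = 14 < 19$, giving (b). When $n \geqslant 10$: I must show $\binom{n-3}{2} + 4 < \binom{n-2}{2} + 3$, i.e. $1 < \binom{n-2}{2} - \binom{n-3}{2} = n-3$, which holds precisely for $n \geqslant 5$ and in particular for all $n \geqslant 10$; this gives (c). In each case the inequality is strict, which is exactly what the fact asserts.

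The only subtlety — and the one place I would be careful rather than cavalier — is justifying that $G \ne K_{n-1}^+$ so that we may invoke part~(ii) of Lemma~\ref{lem2.11} and not merely part~(i). This is where the hypothesis $\delta(G) \geqslant 2$ does its work: a graph of the form $K_p + K_q$ has a pendant vertex if and only if $\min\{p,q\} = 1$, and we have already argued $n_u, n_v \geqslant 3$ from $\delta(G) \geqslant 2$ (a vertex incident with the odd-bridge $e=uv$ but lying in a $K_1$ or $K_2$ side would have degree $1$ or $2$ but then be a cut structure forcing a smaller degree — more simply, if a side of $G-e$ were a single vertex $w$, then $w$ would have degree $1$ in $G$). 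Hence the bound $|E(K_3+K_{n-3})|$ is the correct one to use, and the strictness in Lemma~\ref{lem2.11}(ii) is inherited only when $G \ne K_3 + K_{n-3}$; but even when $G = K_3 + K_{n-3}$ the value $\binom{n-3}{2}+4$ is still strictly below each threshold, so the fact holds with no exceptional case. I do not anticipate any genuine obstacle beyond this bookkeeping; the heavy lifting has been done in Lemmas~\ref{lem2.8} and~\ref{lem2.11}.
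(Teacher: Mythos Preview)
Your proposal is correct and follows essentially the same approach as the paper: both invoke Lemma~\ref{lem2.11}(ii) (using $n_u,n_v\geqslant 3$ to exclude $K_{n-1}^+$) to bound $|E(G)|$ by $|E(K_3+K_{n-3})|=\binom{n-3}{2}+4$, and then compare this value to the three thresholds. Your comparison $\binom{n-2}{2}-\binom{n-3}{2}=n-3>1$ is just a rephrasing of the paper's computation $|E(K_3+K_{n-3})|-\binom{n-2}{2}-3=4-n<0$.
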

\begin{proof}[\bf Proof of Fact \ref{fact1}] Assume $G=K_{n_u}+K_{n_v}$, and $n_u,\ n_v\geqslant3$ are odd.

(a) If $n=6$, together with Lemma \ref{lem2.11}, we know that $|E(G)|\leqslant|E(K_3+K_{3})|=7<10$, as desired.

(b) If $n=8$, by Lemma \ref{lem2.11}, we also get $|E(G)|\leqslant|E(K_3+K_{5})|=14<19$, as desired.

(c) For $n\geqslant10$, we compare $|E(K_3+K_{n-3})|$ with \begin{math}
\left(\begin{smallmatrix}
n-2\\2
\end{smallmatrix} \right)
\end{math}$+3$. Through some computations, we have
$$
|E(K_3+K_{n-3})|-\dbinom{n-2}{2}-3=4-n<0.
$$
Therefore, by Lemma \ref{lem2.11}, we have $|E(G)|\leqslant|E(K_3+K_{n-3})|<$\begin{math}
\left(\begin{smallmatrix}
n-2\\2
\end{smallmatrix} \right)
\end{math}$+3$, as desired.
\end{proof}
By Fact \ref{fact1}, we get contradictions. Now, we assume $S\neq \emptyset$. Then $o(G-S)\geqslant|S|\geqslant1$.
According to the choice of $G,$ one has
\begin{wst}
\item $G-S$ contains only one even connected component, i.e., $\hat{G_1}$;
\item $G[S]$ (resp. each odd connected component of $G-S$) is a complete graph;
\item $G$ is the join of $G[S]$ and $G-S$, i.e., $G=G[S]\vee (G-S)$.
\end{wst}

For convenience, put $o(G-S)=q$, $|V(\hat{G_1})|=\hat{n}$ and $|S|=s$. Then $n\geqslant s+q+\hat{n}\geqslant2s+2$.
Assume that $G_1,G_2,\ldots,G_q$ are all the odd components of $G-S$ and let {$n_i=|V(G_i)|$ for $i=1,\ldots,q.$ For convenience, let $n_1\geqslant n_2\geqslant\cdots\geqslant n_q$}. Then $n=s+\hat{n}+\sum_{i=1}^{q}n_i$ and $G=K_s\vee(\hat{G_1}\cup K_{n_1}\cup K_{n_2}\cup\ldots\cup K_{n_q})$.

Recall that $\hat{G_1}$ is the unique even component of $G-S$ containing an odd-bridge $e=uv$. Assume that $G_u$ and $G_v$ are the two odd components of $\hat{G_1}-e$ containing vertices $u$ and $v$, respectively. By the choice of $G$, we see that $G_u$ and $G_v$ are both complete graphs. Let $|V(G_u)|=n_u$ and $|V(G_v)|=n_v$. Then $\hat{G_1}=K_{n_v}+K_{n_u}$ and $\hat{n}=n_u+n_v$. Without loss of generality, we assume $n_u\geqslant n_v$. We are to show $n_v=1$.
Suppose that $n_v\geqslant3$.

Construct a new graph $\hat{H}=K_{n_u+2}+K_{n_v-2}$, and let $H_1=K_s\vee(\hat{H}\cup K_{n_1}\cup K_{n_2}\cup\ldots\cup K_{n_q})$. We see that
\begin{align*}
|E(H_1)|=&|E(G)|-2(n_v-2)+2n_u=|E(G)|+2(n_u-n_v)+4\geqslant |E(G)|+4.
\end{align*}
Clearly, $|E(H_1)|>|E(G)|$, contradicting the choice of $G$.
Thus, $n_v=1$, and so $\hat{G_1}=K_{\hat{n}-1}^+$. Then $G=K_s\vee(K_{\hat{n}-1}^+\cup K_{n_1}\cup K_{n_2}\cup\ldots\cup K_{n_q}).$

For $s=1$, one has $n_q\geqslant3$. By Lemma \ref{lem2.8}, we have $|E(G)|\leqslant |E(K_1\vee(K_{\hat{n}-1}^+\cup K_{n-\hat{n}-1-3(q-1)}\cup (q-1)K_3)|$ with equality if and only if $G=K_1\vee(K_{\hat{n}-1}^+\cup K_{n-\hat{n}-1-3(q-1)}\cup (q-1)K_3)$.

Recall that $q\geqslant s$. It is clear that $K_1\vee(K_{\hat{n}-1}^+\cup K_{n-\hat{n}-1-3(q-1)}\cup (q-1)K_3)$ is a subgraph of $K_1\vee(K_{\hat{n}-1}^+\cup K_{n-\hat{n}-1})$ and $K_1\vee(K_{\hat{n}-1}^+\cup K_{n-\hat{n}-1})$ satisfies (1). According to the choice of $G$, one has $G=K_1\vee(K_{\hat{n}-1}^+\cup K_{n-\hat{n}-1})$. Next, we show the following fact to get contradictions.
\begin{fact}\label{fact2} Assume $G$ is the graph described above.
\begin{wst}
\item[\rm (a)] If $n=6$, then $|E(G)|<10$;
\item[\rm (b)] If $n=8$, then $|E(G)|<19$;
\item[\rm (c)] If $n\geqslant10$, then $|E(G)|\leqslant$\begin{math}
\left(\begin{smallmatrix}
n-2\\2
\end{smallmatrix} \right)
\end{math}$+3$ with equality if and only if $G=K_1\vee(K_2\cup K_{n-3})$.
\end{wst}
\end{fact}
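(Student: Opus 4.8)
\textbf{Proof proposal for Fact \ref{fact2}.} The plan is to exploit the very rigid structure we have already forced on $G$, namely $G = K_1\vee(K_{\hat n-1}^+\cup K_{n-\hat n-1})$, so that its size is an explicit function of the single free parameter $\hat n$, and then optimize over $\hat n$. First I would record that $\delta(G)\geqslant 2$ together with the existence of the odd-bridge $e=uv$ inside $\hat G_1=K_{\hat n-1}^+$ forces $\hat n-1\geqslant 3$ and $n-\hat n-1\geqslant 1$; since $n$ is even and both the clique $K_{\hat n-1}^+$ (order $\hat n$) and the clique $K_{n-\hat n-1}$ attach to a single apex, the admissible range is $4\leqslant \hat n\leqslant n-2$ with $\hat n$ even (so that $\hat n-1$ and $n-\hat n-1$ are the two odd parts produced by the bridge, consistent with $\hat G_1$ being even). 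Then $|E(G)| = \binom{\hat n-1}{2} + \binom{n-\hat n-1}{2} + n$, the last term $n$ counting the $\hat n-1$ edges from the apex to $K_{\hat n-1}^+$, the $n-\hat n-1$ edges from the apex to $K_{n-\hat n-1}$, plus the bridge edge, which sum to $(\hat n-1)+(n-\hat n-1)+1 = n-1$; I will re-derive this count carefully since the bookkeeping of the pendant vertex of $K_{\hat n-1}^+$ is exactly where an off-by-one can creep in.

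Next I would treat $f(\hat n) := \binom{\hat n-1}{2} + \binom{n-\hat n-1}{2}$ as a function of $\hat n$ on the integer interval $[4,\,n-2]$. This is a convex function of $\hat n$ (a sum of two quadratics opening upward), so on a bounded interval it attains its maximum at one of the two endpoints $\hat n=4$ or $\hat n=n-2$. At $\hat n = n-2$ one gets $f = \binom{n-3}{2} + \binom{1}{2} = \binom{n-3}{2}$, giving $|E(G)| = \binom{n-3}{2} + (n-1)$, which should be compared with the target $\binom{n-2}{2}+3$; the difference $\binom{n-2}{2}+3 - \binom{n-3}{2} - (n-1) = (n-3) + 3 - (n-1) = 1 > 0$, so this endpoint is strictly below the bound — wait, I need $\hat n=n-2$ to actually be even, i.e. $n$ even, which holds; but note $\hat n=n-2$ forces $n-\hat n-1 = 1$, meaning $K_{n-\hat n-1}=K_1$ and $G=K_1\vee(K_{n-3}^+\cup K_1)$, not the claimed extremal graph. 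At $\hat n = 4$ one gets $f = \binom{3}{2} + \binom{n-5}{2} = 3 + \binom{n-5}{2}$, hence $|E(G)| = \binom{n-5}{2} + n + 2$, and $\binom{n-2}{2}+3 - \binom{n-5}{2} - (n+2)$ expands to $3(n-4)+3-(n+2) = 2n-11 > 0$ for $n\geqslant 10$ — again strictly below. So the claimed equality with $G=K_1\vee(K_2\cup K_{n-3})$ must come from the \emph{other} branch $q\geqslant 2$ rather than from this $q=1$ analysis, or equivalently the displayed graph should be re-read with $\hat n$ chosen so that $K_{\hat n-1}^+ = K_2^+$ is a path $P_3$ — in which case $\hat n = 3$, but then $\hat n$ is odd and $\hat G_1$ has order $3$, odd, contradicting that $\hat G_1$ is the even component. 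This tension is the crux: I expect the resolution is that $K_1\vee(K_2\cup K_{n-3})$ arises when $\hat G_1$ is a $K_2$ joined to the apex giving a triangle containing the odd-bridge from the apex's perspective, i.e. $s=0$ degenerates, so the honest plan is to instead directly enumerate over $\hat n\in\{$small even values$\}$ and verify the bound termwise, and separately check the single family $K_1\vee(K_2\cup K_{n-3})$ realizes equality by a one-line computation $\binom{2}{2}+\binom{n-3}{2}+ (1 + (n-3) + \text{?})$.

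Concretely, for parts (a) and (b) the range of $\hat n$ is tiny — for $n=6$, $\hat n\in\{4\}$ giving $G=K_1\vee(K_3^+\cup K_1)$ with $|E(G)| = 3+0+5 = 8 < 10$; for $n=8$, $\hat n\in\{4,6\}$, and one checks $|E(G)|\in\{3+3+7,\ 10+0+7\} = \{13,17\} < 19$ — so these cases are finished by inspection with no inequality manipulation at all. For part (c), I would state the convexity of $f$, evaluate at both endpoints as above to get $|E(G)|\leqslant \max\{\binom{n-3}{2}+n-1,\ \binom{n-5}{2}+n+2\}$, show by the two difference computations that both quantities are $<\binom{n-2}{2}+3$ for $n\geqslant 10$, and then separately append the observation that the graph $K_1\vee(K_2\cup K_{n-3})$ — which sits in the companion case with two odd components, handled in the continuation of the proof after Fact \ref{fact2} — attains $|E(G)| = \binom{n-3}{2} + 1 + (n-1) = \binom{n-2}{2} + 3$ after simplification, which is why it must be excluded.

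The main obstacle I anticipate is not any hard inequality — every step reduces to comparing explicit binomial coefficients — but rather the \emph{parity and structural bookkeeping}: pinning down exactly which even values of $\hat n$ are admissible given that $\hat G_1$ must have even order while its two bridge-pieces $K_{\hat n-1}$ and the pendant have odd orders, and making sure the extremal graph $K_1\vee(K_2\cup K_{n-3})$ is correctly located (it belongs to the $q\geqslant 2$ sub-case, where $K_{n_1}$ is a genuine second odd component rather than part of $\hat G_1$). Once the admissible set and the location of equality are correctly identified, the fact follows from convexity of $f(\hat n)$ plus the two endpoint difference computations displayed above, exactly in the style of Fact \ref{fact1}.
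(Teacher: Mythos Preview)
Your strategy (write $|E(G)|$ as an explicit function of $\hat n$ and optimize) is exactly right, but you have the admissible range of $\hat n$ backwards, and that is why the extremal graph seems to vanish from your analysis.

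\textbf{The range is $2\leqslant \hat n\leqslant n-4$, not $4\leqslant \hat n\leqslant n-2$.} For the lower bound: the argument ``$\delta(G)\geqslant 2$ forces $\hat n-1\geqslant 3$'' was valid in Fact~\ref{fact1} where $S=\emptyset$, but here $s=1$, so the pendant vertex of $K_{\hat n-1}^+$ is also joined to the apex and already has degree $2$. Thus $\hat n=2$ is perfectly legal, and then $K_{\hat n-1}^+=K_1^+=K_2$ with the single edge serving as the odd-bridge; this gives $G=K_1\vee(K_2\cup K_{n-3})$, which is precisely the extremal graph in part~(c). It lives in \emph{this} case, not in some other branch. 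For the upper bound: the odd component $K_{n-\hat n-1}$ is joined only to the single apex, so its vertices have degree $n-\hat n-1$ in $G$; combined with oddness this forces $n-\hat n-1\geqslant 3$, i.e.\ $\hat n\leqslant n-4$. Your bound $\hat n\leqslant n-2$ would allow $K_{n-\hat n-1}=K_1$, violating $\delta(G)\geqslant 2$.

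Once the correct range is in place your convexity/endpoint argument works immediately (and the paper does it even more directly, via the factorization
\[
\binom{n-2}{2}+3-|E(G)|=(\hat n-2)(n-\hat n-2),
\]
which is nonnegative on $2\leqslant\hat n\leqslant n-4$ with equality exactly at $\hat n=2$). For $n=6$ the only admissible value is $\hat n=2$, giving $|E(K_1\vee(K_2\cup K_3))|=9<10$; for $n=8$ one has $\hat n\in\{2,4\}$, giving sizes $18$ and $14$. Two minor arithmetic slips in your proposal are symptoms of the same bookkeeping: the apex is joined to all $\hat n$ vertices of $K_{\hat n-1}^+$ (not $\hat n-1$), so the ``extra'' term is indeed $n$, not $n-1$; and consequently your endpoint computation at $\hat n=n-2$ should give difference $0$, not $1$.
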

\begin{proof}[\bf Proof of Fact \ref{fact2}]\ Note that $n-\hat{n}-1\geqslant3$. Then $2\leqslant\hat{n}\leqslant n-4$.

(a) If $n=6$, then $\hat{n}=2$ and so $G=K_1\vee(K_2\cup K_3)$. By a simple calculation, we have $|E(K_1\vee(K_2\cup K_3))|=9<10$, as desired.

(b) If $n=8$, then $\hat{n}=2,4$ and so $G\in\{K_1\vee(K_2\cup K_5), K_1\vee(K_3^+\cup K_3)\}$. By some direct calculations, we have $|E(K_1\vee(K_2\cup K_5))|=18$ and $|E(K_1\vee(K_3^+\cup K_3))|=14$, as desired.

(c) We compare $|E(G)|$ and \begin{math}
\left(\begin{smallmatrix}
n-2\\2
\end{smallmatrix} \right)
\end{math}$+3$. Then
$$
\dbinom{n-2}{2}+3-|E(G)|=(\hat{n}-2)(n-\hat{n}-2).
$$
Since $2\leqslant\hat{n}\leqslant n-4$, one has $(\hat{n}-2)(n-\hat{n}-2)\geqslant0$ with equality if and only if $\hat{n}=2$. Therefore $|E(G)|$ and \begin{math}
\left(\begin{smallmatrix}
n-2\\2
\end{smallmatrix} \right)
\end{math}$+3$ with equality if and only if $G=K_1\vee(K_2\cup K_{n-3})$.
\end{proof}

By Fact \ref{fact2}, we get contradictions. Next, we consider $s\geqslant2$. Then $n_q\geqslant1$.
By Lemma~\ref{lem2.8}, we have $|E(G)|\leqslant |E(K_s\vee(K_{\hat{n}-1}^+\cup K_{n-s-\hat{n}-q+1}\cup(q-1)K_1))|$ with equality if and only if $G=K_s\vee(K_{\hat{n}-1}^+\cup K_{n-s-\hat{n}-q+1}\cup(q-1)K_1)$. Notice that $o(K_s\vee(K_{\hat{n}-1}^+\cup K_{n-s-\hat{n}-q+1}\cup(q-1)K_1)-V(K_s))=o(G-S)\geqslant s$. According to the choice of $G$, we have $G=K_s\vee(K_{\hat{n}-1}^+\cup K_{n-s-\hat{n}-q+1}\cup(q-1)K_1)$.

Recall that $q\geqslant s$. It is clear that $K_s\vee(K_{\hat{n}-1}^+\cup K_{n-s-\hat{n}-q+1}\cup(q-1)K_1)$ is a subgraph of $K_s\vee(K_{\hat{n}-1}^+\cup K_{n-2s-\hat{n}+1}\cup(s-1)K_1)$. According to the choice of $G$, one has $G=K_s\vee(K_{\hat{n}-1}^+\cup K_{n-2s-\hat{n}+1}\cup(s-1)K_1)$.

Put $t=n-2s-\hat{n}+1$. In what follows, we prove $t=1,$ or $\hat{n}=2$. Suppose that $t>1$ and $\hat{n}>2$. Consider a new graph $H_2=K_s\vee (K_{n-2s-1}^+\cup sK_1).$ Clearly, $H_2-V(K_s)$ has a component $K_{n-2s-1}^+$ containing an odd-bridge and $o(H_2-V(K_s))=s>|V(K_s)|-2$. We find that
\begin{align*}
|E(H_2)|=|E(G)|-(t-1)+(\hat{n}-1)(t-1)
        =|E(G)|+(t-1)(\hat{n}-2)
        >|E(G)|,
\end{align*}
 i.e., $|E(H_2)|>|E(G)|$, a contradiction. Thus, $t=1$ or $\hat{n}=2$. So we have $$|E(G)|\leqslant\max\{|E(K_s\vee(K_{n-2s-1}^{+} \cup sK_1))|,\ \ |E(K_s\vee(K_2\cup K_{n-2s-1}\cup(s-1)K_1))|\}.$$

Next, we proceed by showing the following fact.
\begin{fact}\label{fact3}
Let $G=K_s\vee(K_{n-2s-1}^{+} \cup sK_1)$ or $K_s\vee(K_2\cup K_{n-2s-1}\cup(s-1)K_1)$.
\begin{wst}
\item[\rm (a)] If $n=6$, then $E(G)=10$ and $G=K_2\vee(K_2\cup2K_1)$;
\item[\rm (b)] If $n=8$, then $E(G)\leqslant19$ with equality if and only if $G=K_3\vee(K_2\cup3K_1)$;
\item[\rm (c)] If $n\geqslant10$, then $E(G)\leqslant$\begin{math}
\left(\begin{smallmatrix}
n-2\\2
\end{smallmatrix} \right)
\end{math}$+3$ with equality if and only if $G=K_4\vee(K_{2}\cup4K_1)$.
\end{wst}
\end{fact}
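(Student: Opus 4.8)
The plan is to reduce Fact~\ref{fact3} to a one--variable optimization, the key observation being that the two candidate graphs have \emph{equal} size. Writing $G_1 = K_s\vee(K_{n-2s-1}^{+}\cup sK_1)$ and $G_2 = K_s\vee(K_2\cup K_{n-2s-1}\cup(s-1)K_1)$, both have exactly $n-s$ vertices outside the dominating $K_s$, so the join contributes $s(n-s)$ edges in each case; inside the non-join part, $G_1$ contributes $\binom{n-2s-1}{2}+1$ edges (those of $K_{n-2s-1}^{+}$) and $G_2$ contributes $1+\binom{n-2s-1}{2}$ edges. Hence in both cases
\begin{equation*}
|E(G)| \;=\; \binom{s}{2}+\binom{n-2s-1}{2}+1+s(n-s) \;=:\; f(s),
\end{equation*}
and a routine expansion gives $f(s)=\frac{1}{2}\bigl(3s^{2}+5s+n^{2}-2ns-3n+4\bigr)$. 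From the construction preceding the Fact, the admissible range of the parameter is $2\le s\le \frac{n}{2}-1$ (recall $s\ge 2$ and $t=n-2s-1\ge 1$).

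I would first handle the small orders directly. For $n=6$ the only admissible value is $s=2$, giving $f(2)=10$ and $G=K_2\vee(K_1^{+}\cup 2K_1)=K_2\vee(K_2\cup 2K_1)$, which is (a). For $n=8$ one has $s\in\{2,3\}$ with $f(2)=17<19=f(3)$, so $|E(G)|\le 19$ with equality exactly when $s=3$, i.e.\ $G=K_3\vee(K_1^{+}\cup 3K_1)=K_3\vee(K_2\cup 3K_1)$, which is (b).

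For $n\ge 10$, since $f$ is a quadratic in $s$ with positive leading coefficient, its maximum over $[2,\frac{n}{2}-1]$ is attained at an endpoint, so it suffices to compare $f(2)$ and $f\bigl(\frac{n}{2}-1\bigr)$ with $\binom{n-2}{2}+3$. A short computation gives
\begin{equation*}
\binom{n-2}{2}+3-f(2)=n-7>0,\qquad \binom{n-2}{2}+3-f\!\left(\tfrac{n}{2}-1\right)=\frac{(n-4)(n-10)}{8}\ge 0,
\end{equation*}
the latter with equality if and only if $n=10$. Hence $|E(G)|\le\binom{n-2}{2}+3$, and equality forces $n=10$ and $s=\frac{n}{2}-1=4$, so $G=K_4\vee(K_1^{+}\cup 4K_1)=K_4\vee(K_2\cup 4K_1)$ (a graph on $10$ vertices); for every $n>10$ the bound is already strict. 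This gives (c).

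The one place needing care is the bookkeeping: extracting the admissible range $2\le s\le\frac{n}{2}-1$ from the construction and checking that the two displayed differences really do dominate $\binom{n-2}{2}+3-f(s)$ throughout that range. Beyond that the argument is elementary arithmetic, so I do not expect a genuine obstacle here.
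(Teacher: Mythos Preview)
Your proof is correct and follows essentially the same approach as the paper: both observe that $|E(G_1)|=|E(G_2)|$, express this common size as a function of $s$ over the range $2\le s\le\frac{n-2}{2}$, handle $n\in\{6,8\}$ by direct evaluation, and for $n\ge 10$ compare with $\binom{n-2}{2}+3$. The only minor difference is in that last comparison: the paper factors the gap as $\binom{n-2}{2}+3-|E(G)|=\frac{(s-1)(2n-3s-8)}{2}$ and reads off the sign directly, whereas you exploit convexity of $f$ and check the two endpoints; both lead to the same extremal pair $(n,s)=(10,4)$.
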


\begin{proof}[\bf Proof of Fact \ref{fact3}]\ Recall $n\geqslant2s+2$ and $s\geqslant2$. Thus $2\leqslant s\leqslant\frac{n-2}{2}$.

(a) If $n=6$, then $s=2$ and $G=K_2\vee(K_2\cup2K_1).$ By a direct calculation, we have $|E(K_2\vee(K_2\cup2K_1))|=10$, as desired.

(b) If $n=8$, then $s=2,3$ and $G\in\{K_2\vee(K_3^+\cup2K_1), K_2\vee(K_2\cup K_3\cup K_1), K_3\vee(K_2\cup3K_1)\}$. By some computations, we have $|E(K_2\vee(K_3^+\cup2K_1))|=|E(K_2\vee(K_2\cup K_3\cup K_1))|=17$ and $|E(K_3\vee(K_2\cup3K_1))|=19$, as desired.

(c) Obviously, $|E(K_s\vee(K_{n-2s-1}^{+} \cup sK_1))|=|E(K_s\vee(K_2\cup K_{n-2s-1}\cup(s-1)K_1))|=s(s+1)+1+$\begin{math}
\left(\begin{smallmatrix}
n-s-1\\2
\end{smallmatrix} \right)
\end{math}.
We compare $|E(G)|$ and \begin{math}
\left(\begin{smallmatrix}
n-2\\2
\end{smallmatrix} \right)
\end{math}$+3$. Then
\begin{align}\label{eq:4.1}
\dbinom{n-2}{2}+3-|E(G)|=&\dbinom{n-2}{2}+3-\dbinom{n-s-1}{2}-s(s+1)-1\notag\\
                        =&\frac{(s-1)(2n-3s-8)}{2}.
\end{align}

We first consider $n=2s+2$. As $n\geqslant10$, we have $s\geqslant4$. Clearly, $s-1>0$ and $2n-3s-8=s-4\geqslant0$, with equality if and only if $s=4$. By \eqref{eq:4.1}, we have $E(G)\leqslant$
\begin{math}
\left(\begin{smallmatrix}
n-2\\2
\end{smallmatrix} \right)
\end{math}$+3$, with equality if and only if $G=K_4\vee(K_{2}\cup4K_1)$.

Next, we assume $n\geqslant2s+4$.
Since $s\geqslant2$, one has $2n-3s-8\geqslant s>0$. By \eqref{eq:4.1}, {one has} $|E(G)|<$\begin{math}
\left(\begin{smallmatrix}
n-2\\2
\end{smallmatrix} \right)
\end{math}$+3$.

Therefore, for $n\geqslant10$, one has $|E(G)|\leqslant$\begin{math}
\left(\begin{smallmatrix}
n-2\\2
\end{smallmatrix} \right)
\end{math}$+3$, with equality if and only if $G=K_4\vee(K_{2}\cup4K_1)$.

we complete the proof of Fact \ref{fact2}.
\end{proof}
By Fact \ref{fact2}, we may obtain contradictions.
Together with Cases 1 and 2, our result holds.
\end{proof}

\section{\normalsize Proof of Theorem \ref{thm1.4}}\setcounter{equation}{0}
In this section, we give the proof of Theorem \ref{thm1.4}, which gives a sufficient condition via the spectral radius of a connected graph to ensure that the graph is $k$-extendable.
\begin{proof}[\bf Proof of Theorem \ref{thm1.4}]
By Lemmas \ref{lem2.010} and \ref{lem2.10}, we obtain that $G$ has a perfect matching. Thus $G$ must contain $k$ independent edges. Suppose to the contrary that $G$ is not $k$-extendable. By Lemma \ref{lem2.6}, there exists a vertex subset $S\subseteq V(G)$ satisfying $G[S]$ contains $k$ independent edges and $o(G-S)>|S|-2k$. Since $n$ is even, we have $o(G-S)\equiv|S|\pmod 2$. Thus $o(G-S)\geqslant|S|-2k+2$ and $|S|\geqslant2k$. Choose such a connected graph such that its spectral radius is as large as possible.

Together with Lemma \ref{lem2.1} and the choice of $G$, the induced subgraph $G[S]$ and each connected component of $G-S$ are complete graphs, respectively. Furthermore, all components of $G-S$ are odd.

For convenience, let $o(G-S)=q$ and $|S|=s$. Then $n\geqslant2s-2k+2$. Assume that $G_1,G_2,\ldots,G_q$ are all the components of $G-S$. For convenience, let $n_i=|V(G_i)|$ for $i=1,2,\ldots,q$ and assume $n_1\geqslant n_2\geqslant\cdots\geqslant n_q\geqslant1$. Then $G=K_s\vee(K_{n_1}\cup K_{n_2}\cup\cdots\cup K_{n_q})$. By Lemma \ref{lem2.8}, we have $\rho(G)\leqslant\rho(K_s\vee(K_{n-s-q+1}\cup (q-1)K_1))$ with equality if and only if $G=K_s\vee(K_{n-s-q+1}\cup (q-1)K_1)$. Note that $o(K_s\vee(K_{n-s-q+1}\cup (q-1)K_1)-V(K_s))=o(G-S)\geqslant s-2k+2$. According to the choice of $G$, we obtain that $G=K_s\vee(K_{n-s-q+1}\cup (q-1)K_1)$.

Notice that $q\geqslant s-2k+2$. It is clear that $K_s\vee(K_{n-s-q+1}\cup (q-1)K_1)$ is a subgraph of $K_s\vee(K_{n-2s+2k-1}\cup(s-2k+1)K_1)$ and
$o(K_s\vee(K_{n-2s+2k-1}\cup(s-2k+1)K_1)-V(K_s))=s-2k+2$. Together with Lemma \ref{lem2.1} and the choice of $G$, one has $G=K_s\vee(K_{n-2s+2k-1}\cup(s-2k+1)K_1)$.

In what follows, we show that $\rho(G)\leqslant\rho(H)$ with equality if and only if $G=H$, where $H=K_{2k}\vee(K_{n-2k-1}\cup K_1)$.

Clearly, $G=H$ if $s=2k$. Therefore, it suffices to show that $\rho(G)<\rho(H)$ for $s\geqslant2k+1$.
According to the partition $V(H)=V(K_{2k})\cup V(K_{n-2k-1})\cup V(K_1)$, the quotient matrix of $A(H)$ is
$$
B_H=
\left(
  \begin{array}{ccc}
  2k-1 & n-2k-1 &1\\
  2k &n-2k-2 & 0\\
  2k &0 & 0\\
  \end{array}
\right).
$$
It is routine to check that the characteristic polynomial of $B_H$ is
$$
\Phi(B_H,x)=x^3-(n-3)x^2-(2k+n-2)x-4k^2+2kn-4k.
$$
Obviously, the vertex partition of $H$ is equitable, by Lemma \ref{lem2.4}, the largest root, say $\rho_H$, of $\Phi(B_H,x)=0$ equals $\rho(H)$.

Firstly, we consider $s=2k+1$, then $n\geqslant2k+4$. If $n=2k+4$, then $G=K_{2k+1}\vee3K_1$ and $H=K_{2k}\vee(K_3\cup K_1)$. According to the partition $V(G)=V(K_{2k+1})\cup V(3K_1)$, the quotient matrix of $A(G)$ is
$$
M_{G}=
\left(
  \begin{array}{ccc}
    2k & 3 \\
    2k+1 & 0\\
  \end{array}
\right).
$$
By a simple calculation, one has that the characteristic polynomial of $M_G$ is
$$
\Phi(M_G,x)=x^2-2kx-6k-3.
$$
Clearly, the vertex partition of $G$ is equitable. By Lemma \ref{lem2.4}, the largest root $\rho_1=k+\sqrt{k^2+6k+3}$ of $\Phi(M_G,x)=0$ equals $\rho(G)$.
By plugging the value $\rho_1$ into $x$ of $\Phi(B_H,x)-x\Phi(M_G,x)$, we have
\begin{align*}
\Phi(B_H,\rho_1)-\rho_1\Phi(M_G,\rho_1)=&-\rho_1^2+2k\rho_1+\rho_1+4k
                                                  =-(k+3)+\sqrt{k^2+6k+3}<0.
\end{align*}
The inequality follows by the fact that $(k+3)^2-(k^2+6k+3)=6>0$. Since $\Phi(M_G,\rho_1)=0$, we have $\Phi(B_H,\rho_1)=\Phi(B_H,\rho_1)-\rho_1\Phi(M_G,\rho_1)<0$. Thus, $\rho(G)<\rho(H)$ for $s=2k+1$ and $n=2k+4$.

Next, we consider the case for $s=2k+1$ and $n\geqslant 2k+6$. In this case, the quotient matrix of $A(G)$ corresponding to the partition $V(G)=V(K_s)\cup V(K_{n-2s+2k-1})\cup V((s-2k+1)K_1)$ is
$$
B_G=
\left(
  \begin{array}{ccc}
    s-1 & n-2s+2k-1 & s-2k+1\\
    s & n-2s+2k-2& 0\\
    s & 0 & 0\\
  \end{array}
\right).
$$
By a simple calculation, we obtain that the characteristic polynomial of $B_G$ is
\[\label{eq:4.01}
\Phi(B_G,x)=x^3-(n-s+2k-3)x^2-(n-2sk+s^2+2k-2)x-s(2k-s-1)(2k+n-2s-2).
\]
Obviously, the partition $V(G)=V(K_s)\cup V(K_{n-2s+2k-1})\cup V((s-2k+1)K_1)$ is equitable. By Lemma~\ref{lem2.4}, the largest root, say $\rho$, of $\Phi(B_G,x)=0$ equals $\rho(G)$.

Note that $s=2k+1$. Hence, $G=K_{2k+1}\vee(K_{n-2k-3}\cup2K_1)$. By (\ref{eq:4.01}), we have that
$$
\Phi(B_G,x)=x^3-(n-4)x^2-(4k-1+n)x-8k^2+4kn-20k+2n-8.
$$
By plugging the value $\rho$ into $x$ of $\Phi(B_H,x)-\Phi(B_G,x)$, we have
$$\Phi(B_H,\rho)-\Phi(B_G,\rho)=-\rho^2+(2k+1)\rho+4k^2-2kn+16k-2n+8.$$
Let $\Psi_1(x)=-x^2+(2k+1)x+4k^2-2kn+16k-2n+8$ be a real function in $x$, where $x\in[n-3,+\infty)$. By a direct calculation,
$$\Psi_1'(x)=-2x+2k+1, \ \ \ \Psi_1''(x)=-2<0.$$
Hence, $\Psi_1'(x)$ is a decreasing function. Consequently, $\Psi_1'(x)\leqslant\Psi_1'(n-3)=7+2k-2n\leqslant-5-2k<0$ for $x\geqslant n-3$ and $n\geqslant2k+6$. That is to say $\Psi_1(x)$ is a decreasing on $[n-3,+\infty)$. Note that $K_{n-2}$ is a proper subgraph of $G=K_{2k+1}\vee(K_{n-2k-3}\cup2K_1)$. By Lemma \ref{lem2.1}, one has $\rho(G)>\rho(K_{n-2})=n-3$. Therefore, $\Psi_1(\rho)<\Psi_1(n-3)$. By a simple computation, we get
\begin{align*}
\Psi_1(n-3)=&-n^2+5n+4k^2+10k-4\\
           \leqslant&-(2k+6)^2+5(2k+6)+4k^2+10k-4\tag{As $\frac{5}{2}<2k+6\leqslant n$}\\
           =&-4k-10<0.
\end{align*}
Note that $\Phi(B_G,\rho)=0$, we have $\Phi(B_H,\rho)=\Psi_1(\rho)<\Psi_1(n-3)<0$. Thus, $\rho(G)<\rho(H)$ for $s=2k+1$ and $n\geqslant2k+6$.

Now we consider $s\geqslant2k+2$. Let $\lambda_1=\rho(G)\geqslant\lambda_2\geqslant\lambda_3$ be the three roots of $\Phi(B_G,x)=0$. In what follows, we shall prove that $\lambda_2<\rho_H$ and $\Phi(B_G,\rho_H)>0$.

Let $D={\rm diag}(s,n-2s+2k-1,s-2k+1)$. It is easy to check that $B=D^{\frac{1}{2}}B_GD^{-\frac{1}{2}}$ is symmetric, and also contains
$$
B_2=\left(
  \begin{array}{cc}
    n-2s+2k-2 & 0 \\
    0 & 0 \\
  \end{array}
\right)
$$
as its submatrix. Since $B$ and $B_G$ have the same eigenvalues, by the Cauchy {Interlacing Theorem}, we have $\lambda_2(B)=\lambda_2\leqslant\lambda_1(B_2)=n-2s+2k-2<n-2$. Note that $K_{n-1}$ is a proper subgraph of $H=K_{2k}\vee(K_{n-2k-1}\cup K_1)$. By Lemma \ref{lem2.1}, one has $\rho(H)=\rho_H>n-2$. Thus $\lambda_2<\rho_H$, as desired.

By plugging the value $\rho_H$ into $x$ of $\Phi(B_G,x)-\Phi(B_H,x)$, we have
$$
\Phi(B_G,\rho_H)-\Phi(B_H,\rho_H)=(s-2k)[\rho_H^2-s\rho_H+(s+1)n+2sk-2s^2-2k-4s-2].
$$
Let $\Psi_2(x)=x^2-sx+(s+1)n+2sk-2s^2-2k-4s-2$ be a real function in $x$, where $x\in[n-2,+\infty)$.
By some calculations, we find that $\Psi_2'(x)=2x-s$ and $\Psi_2''(x)=2>0$. Hence, $\Psi_2'(x)$ is a monotonically increasing function for $x\geqslant n-2$. Consequently, $\Psi_2'(x)\geqslant \Psi_2'(n-2)=2n-s-4>0$ for $x\geqslant n-2$. That is to say that $\Psi_2(x)$ is increasing on $[n-2,+\infty)$.
Recall that $\rho_H>n-2$, we have $\Psi_2(\rho_H)>\Psi_2(n-2)$.
By a simple calculation, one obtains
\begin{align}
\Psi_2(n-2)=&n^2-3n+2sk-2s^2-2k-2s+2\notag\\
        \geqslant&(2s-2k+2)^2-3(2s-2k+2)+2sk-2s^2-2k-2s+2\label{eq:4.02}\\
        =&2s^2-6ks+4k^2-4k\notag\\
        \geqslant&2(2k+2)^2-6k(2k+2)+4k^2-4k\label{eq:4.03}\\
        =&8>0.\notag
\end{align}
The inequality in \eqref{eq:4.02} follows by $\frac{3}{2}<2k+6\leqslant2s-2k+2\leqslant n$, and the inequality in \eqref{eq:4.03} follows by $\frac{3k}{2}<2k+2\leqslant s$. Thus, $\Psi_2(\rho_H)>\Psi_2(n-2)>0$. Together with $s\geqslant2k+2$, we have $\Phi(B_G,\rho_H)-\Phi(B_H,\rho_H)=(s-2k)\Psi_2(\rho_H)>0$. Note that $\Phi(B_H,\rho_H)=0$, one has $\Phi(B_G,\rho_H)>0$, as desired.

Therefore, $\rho(G)<\rho(H)$ for $s\geqslant2k+2$. Then we proved $\rho(G)\leqslant\rho(H)$ with equality if and only if $G=H$. This contradicts the assumption.

This completes the proof of Theorem \ref{thm1.4}.
\end{proof}

\section{\normalsize Proof of Theorem \ref{thm1.6}}\setcounter{equation}{0}
In this section, we give the proof of Theorem \ref{thm1.6}, which presents a sufficient condition via the spectral radius of a graph to ensure that the graph is $1$-excludable.

\begin{proof}[\bf Proof of Theorem \ref{thm1.6}] Assume that $e$ is an edge of $G$. Suppose to the contrary that $G$ has no perfect matching excluding the edge $e$. By Lemma \ref{lem2.7}, there exists a vertex subset $S\subseteq V(G)$ such that the following statements hold:
\begin{wst}
\item[{\rm (1)}] If $G-S$ has a component containing an odd-bridge, then $o(G-S)>|S|-2$;
\item[{\rm (2)}] $o(G-S)>|S|$, otherwise.
\end{wst}
Choose such a connected graph $G$ so that its spectral radius is as large as possible. We proceed by considering the following two possible cases.

{\bf Case 1.} $G-S$ has no component containing an odd-bridge.

Note that $n$ is even and $o(G-S)\equiv|S| \pmod 2$. Thus, $o(G-S)\geqslant|S|+2$. By Lemmas \ref{lem2.1} and~\ref{lem2.10}, we may easily get contradictions for $n=6$ or $n\geqslant10$. For $n=8$, by a simple computation, we have $\rho(G)\leqslant\rho(K_1\vee(K_5\cup2K_1))\thickapprox5.0695<\rho(K_3\vee(K_2\cup3K_1))\thickapprox5.1757$, a contradiction.

{\bf Case 2.} $G-S$ has a component, say $\hat{G_1}$, containing an odd-bridge $e=uv$.

Note that $|V(\hat{G_1})|$ and $n$ are even. Then, $o(G-S)\equiv|S|\pmod 2$. Thus, $o(G-S)\geqslant|S|$. Firstly, we assume $S=\emptyset$, then $o(G)=o(G-S)\geqslant0$. Since $G$ is a connected graph and $n$ is even, one has $o(G)=0$. Then $G=\hat{G_1}$. Let $n_u$ and $n_v$ denote the number of vertices of connected components containing vertices $u$ and $v$ in $G-e$, respectively. Since $\delta(G)\geqslant2$, one has $n_u,\ n_v\geqslant3$. According to the choice of $G$ and Lemma \ref{lem2.1}, we may obtain $G=K_{n_u}+K_{n_v}$.

Let $F_1=K_2\vee(K_2\cup2K_1)$, $F_2=K_3\vee(K_2\cup3K_1)$ and $F_3=K_1\vee(K_{2}\cup K_{n-3})$. In what follows, we need to show the following claim to deduce a contradiction.
\begin{claim}\label{claim1}Assume $G$ is the graph described above.
\begin{wst}
\item[{\rm (a)}] If $n=6$, then $\rho(G)<\rho(F_1)$;
\item[{\rm (b)}] If $n=8$, then $\rho(G)<\rho(F_2)$;
\item[{\rm (c)}] If $n\geqslant10$, then $\rho(G)<\rho(F_3)$.
\end{wst}
\end{claim}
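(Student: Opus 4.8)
The plan is to dispose of the two small cases $n=6,8$ by direct numerical estimates on the (at most) three possible graphs $G=K_{n_u}+K_{n_v}$, and then handle $n\ge 10$ in two stages using the two comparison lemmas already in hand. First recall that by Lemma \ref{lem2.11}(i) we have $\rho(G)\le\rho(K_{l-1}^+)$ over all of $\mathcal G$ (here $l=n$), with equality only at $K_{n-1}^+$; and by Lemma \ref{lem2.11}(ii), if $G\ne K_{n-1}^+$ then $\rho(G)\le\rho(K_3+K_{n-3})$, again with equality only at $K_3+K_{n-3}$. So for the generic case it suffices to compare the two extremal graphs $K_{n-1}^+$ and $K_3+K_{n-3}$ against $F_3=K_1\vee(K_2\cup K_{n-3})$. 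Note $K_3+K_{n-3}$ and $F_3$ both have $n$ vertices and the same number of edges, so this is a genuine spectral comparison, not a subgraph comparison.

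For the small cases: when $n=6$, $\mathcal G=\{K_5^+,\ K_3+K_3\}$ and one checks $\rho(K_5^+)\approx 4.21$, $\rho(K_3+K_3)\approx 2.56$, both strictly below $\rho(F_1)=\rho(K_2\vee(K_2\cup 2K_1))$ (whose quotient matrix gives $\rho(F_1)\approx 3.24$ — wait, we need $\rho(F_1)>\rho(K_5^+)$, so I should double-check: actually $K_5^+$ contains $K_5$, giving $\rho(K_5^+)>4$, while $F_1$ on $6$ vertices has far fewer edges). Since this arithmetic is delicate I would instead argue: $F_1=K_2\vee(K_2\cup 2K_1)$ has maximum degree $5$ and is not far from a book graph, and a quotient-matrix computation via Lemma \ref{lem2.4} against the partition $V(K_2)\cup V(K_2)\cup V(2K_1)$ yields its spectral radius exactly as the largest root of a cubic; then compare that root with $\rho(K_5^+)$ and $\rho(K_3+K_3)$ numerically. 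The same strategy works for $n=8$ with $\mathcal G=\{K_7^+,\ K_3+K_5\}$ versus $F_2=K_3\vee(K_2\cup 3K_1)$, using the equitable partition of $F_2$ into $V(K_3)\cup V(K_2)\cup V(3K_1)$. The main obstacle here is purely that the inequalities may go the "wrong" way for $K_{n-1}^+$ (which contains a large clique): one must verify that $F_1,F_2$ really do beat $K_5^+,K_7^+$ respectively, and if not, the claim as stated would need the hypothesis $\delta(G)\ge 2$ to already exclude $K_{n-1}^+$ in the $S=\varnothing$ subcase — indeed $K_{n-1}^+$ has a pendant vertex, so $\delta(K_{n-1}^+)=1$ and it is \emph{not} in the admissible family! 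This is the key realization: under $\delta(G)\ge 2$ we have $n_u,n_v\ge 3$ and both $\ge 3$, so $G\ne K_{n-1}^+$, and we only ever need Lemma \ref{lem2.11}(ii).

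So the real content is: for $n\ge 10$, show $\rho(K_3+K_{n-3})<\rho(F_3)=\rho(K_1\vee(K_2\cup K_{n-3}))$. I would compute both spectral radii as largest roots of the characteristic polynomials of their quotient matrices under the obvious equitable partitions — for $K_3+K_{n-3}$ the partition $V(K_2)\cup\{u\}\cup\{v\}\cup V(K_{n-4})$ already appears in the proof of Lemma \ref{lem2.11}(ii) as $\Phi_2(x)$ with $l=n$; for $F_3$ the partition $V(K_1)\cup V(K_2)\cup V(K_{n-3})$ gives a $3\times 3$ quotient matrix and a cubic $\Psi(x)$. Then, writing $\rho=\rho(K_3+K_{n-3})$, I would evaluate $\Psi(\rho)$ (or better, a suitable combination $\Psi(x)-p(x)\Phi_2(x)$ chosen to kill the leading terms) and show it has the sign forcing $\rho(F_3)>\rho$. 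To control $\rho$ in that evaluation I would use $\rho(K_3+K_{n-3})\ge\rho(K_{n-3})=n-4$ from Lemma \ref{lem2.1} (since $K_{n-3}$ is a proper subgraph) together with an upper bound such as $\rho<n-3$ (as $K_3+K_{n-3}$ is a proper subgraph of $K_{n-1}$), reducing the sign verification to a polynomial inequality in $n$ on $[n-4,n-3]$, monotone for $n\ge 10$. The main obstacle in this stage is bookkeeping: getting the two cubics right and finding the multiplier polynomial $p(x)$ that makes the difference manifestly one-signed on the relevant interval; this is routine but must be done carefully since $K_3+K_{n-3}$ and $F_3$ are quite close in edge count and structure.
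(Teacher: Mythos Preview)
Your overall strategy matches the paper's: use $\delta(G)\geqslant 2$ to force $n_u,n_v\geqslant 3$, so $G\neq K_{n-1}^+$ and Lemma~\ref{lem2.11}(ii) reduces everything to comparing $\rho(K_3+K_{n-3})$ against $\rho(F_i)$; parts (a) and (b) are then dispatched by the same direct numerical check the paper does.

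For (c), however, the paper's route is considerably shorter than your proposed polynomial manipulation. Rather than forming $\Psi(x)-p(x)\Phi_2(x)$ and analysing its sign on an interval, the paper simply evaluates $\Phi_2$ (with $l=n$) at a handful of integers to locate all four roots, obtaining in particular $\Phi_2(3)<0$ and $\Phi_2(n-3)>0$, whence $\rho(K_3+K_{n-3})<n-3$. Since $K_{n-2}$ is a proper subgraph of $F_3=K_1\vee(K_2\cup K_{n-3})$, Lemma~\ref{lem2.1} gives $\rho(F_3)>n-3$, and the two inequalities sandwich $n-3$ to finish. No cubic-versus-quartic comparison is required.

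One concrete slip: your justification ``$\rho<n-3$ (as $K_3+K_{n-3}$ is a proper subgraph of $K_{n-1}$)'' is false on vertex counts --- $K_3+K_{n-3}$ has $n$ vertices, not $n-1$, so it cannot be a subgraph of $K_{n-1}$ at all (and even a correct subgraph inclusion into some $K_m$ would only give $\rho<m-1$). The bound $\rho(K_3+K_{n-3})<n-3$ genuinely needs the sign check on $\Phi_2$; once you have it, your polynomial-difference plan becomes unnecessary.
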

\begin{proof}[\bf Proof of Claim \ref{claim1}]\ Assume $G=K_{n_u}+K_{n_v}$, and $n_u$ and $n_v$ are two odd integers.

(a) If $n=6$, by a direct calculation, we have $\rho(F_1)\thickapprox3.6262>\rho(K_3+K_3)\thickapprox2.4142$. Together with Lemma \ref{lem2.11}, one has $\rho(G)\leqslant\rho(K_3+K_3)<\rho(F_1)$.

(b) If $n=8$, by a direct calculation, we have $\rho(F_2)\thickapprox5.1757>\rho(K_3+K_5)\thickapprox4.0615$. Together with Lemma \ref{lem2.11}, one has $\rho(G)\leqslant\rho(K_3+K_5)<\rho(F_2)$.

(c) We consider $n\geqslant10$. In view of \eqref{eq:2.1}, it is easy to check that
$$
\Phi_2(-2)=1+n>0,\ \ \Phi_2(-1)=-2n+8<0,\ \ \Phi_2(1)=4n-20>0,\ \  \Phi_2(3)=-14n+96<0,
$$
and
$$
\Phi_2(n-3)=(n-3)(n-3-\sqrt{5})(n-3+\sqrt{5})>0.
$$
Thus, one has $3<\rho(K_3+K_{n-3})<n-3$. Since $K_{n-2}$ is a proper subgraph of $F_3$, by Lemma \ref{lem2.1}, we have $n-3<\rho(F_3)$. Together with Lemma \ref{lem2.11}, we have $\rho(G)\leqslant\rho(K_3+K_{n-3})<n-3<\rho(F_3)$.

This completes the proof of Claim \ref{claim1}.
\end{proof}
In view of Claim \ref{claim1}, we may obtain a contradiction to {the assumption. }

Next, we assume $S\neq\emptyset$. Then $o(G-S)\geqslant|S|\geqslant1$.
According to the choice of $G$, one has
\begin{wst}
\item $G-S$ contains only one even connected component, say $\hat{G_1}$;
\item $G[S]$ (resp. each odd connected component of $G-S$) is a complete graph;
\item $G$ is the join of $G[S]$ and $G-S$, i.e., $G=G[S]\vee (G-S)$.
\end{wst}

For convenience, put $o(G-S)=q$, $|V(\hat{G_1})|=\hat{n}$ and $|S|=s$. Then $n\geqslant\hat{n}+s+q\geqslant2s+2$.
Assume that $G_1,G_2,\ldots,G_q$ are all the odd components of $G-S$. For convenience, let $n_i=|V(G_i)|$ for $i=1,2,\ldots,q$ and assume $n_1\geqslant n_2\geqslant\cdots\geqslant n_q$. Clearly, $n=s+\hat{n}+\sum_{i=1}^{q}n_i$ and $G=K_s\vee(\hat{G_1}\cup K_{n_1}\cup K_{n_2}\cup\ldots\cup K_{n_q})$.

Note that $\hat{G_1}$ is the unique even component of $G-S$ containing an odd-bridge $e=uv$. Assume that $G_u$ and $G_v$ are the two odd components of $\hat{G_1}-e$ containing vertices $u$ and $v$, respectively. According to the choice of $G$, we see that both $G_u$ and $G_v$ are complete graphs. Let $|V(G_u)|=n_u$ and $|V(G_v)|=n_v$. Then $\hat{G_1}=K_{n_v}+K_{n_u}$ and $\hat{n}=n_u+n_v$.
Let ${\bf x}$ be the Perron vector of $A(G)$ with respect to $\rho(G)$. Without loss of generality, assume that $x_u\geqslant x_v$. Suppose $N_G(v)\setminus N_G[u]\neq\emptyset$. Let
$$
H_1=G-\sum_{w\in N_G(v)\setminus N_G[u]}vw+\sum_{w\in N_G(v)\setminus N_G[u]}uw.
$$
By Lemma \ref{lem2.3}, we have that $\rho(G)<\rho(H_1)$. Note that $H_1$ is a subgraph of $H_2=K_s\vee(K_{\hat{n}-1}^+\cup K_{n_1}\cup K_{n_2}\cup\ldots\cup K_{n_q})$ and $H_2$ satisfies statement (1), a contradiction. Therefore, $N_G(v)\setminus N_G[u]=\emptyset$ and so $\hat{G_1}=K_{\hat{n}-1}^+$. Then $G=K_s\vee(K_{\hat{n}-1}^+\cup K_{n_1}\cup K_{n_2}\cup\ldots\cup K_{n_q})$.

For $s=1$, one has $n_q\geqslant3$. By Lemma \ref{lem2.8}, we have $\rho(G)\leqslant \rho(K_1\vee(K_{\hat{n}-1}^+\cup K_{n-\hat{n}-1-3(q-1)}\cup (q-1)K_3)$ with equality if and only if $G=K_1\vee(K_{\hat{n}-1}^+\cup K_{n-\hat{n}-1-3(q-1)}\cup (q-1)K_3)$.
Recall that $q\geqslant s$. It is clear that $K_1\vee(K_{\hat{n}-1}^+\cup K_{n-\hat{n}-1-3(q-1)}\cup (q-1)K_3)$ is a subgraph of $K_1\vee(K_{\hat{n}-1}^+\cup K_{n-\hat{n}-1})$ and $K_1\vee(K_{\hat{n}-1}^+\cup K_{n-\hat{n}-1})$ satisfies (1). According to the choice of $G$, one has $G=K_1\vee(K_{\hat{n}-1}^+\cup K_{n-\hat{n}-1})$. Next, we proceed by showing the following claim.

\begin{claim}\label{claim2}
Let $G=K_1\vee (K_{\hat{n}-1}^+\cup K_{n-\hat{n}-1})$ and assume $F_1, F_2, F_3$ are the graphs described as before.
\begin{wst}
\item[{\rm (a)}] If $n=6$, then $\rho(G)<\rho(F_1)$;
\item[{\rm (b)}] If $n=8$, then $\rho(G)<\rho(F_2)$;
\item[{\rm (c)}] If $n\geqslant10$, then $\rho(G)\leqslant\rho(F_3)$ with equality if and only if $G=F_3$.
\end{wst}
\end{claim}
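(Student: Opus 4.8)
The plan is to use the classification already in hand: in this branch $G=K_1\vee(K_{\hat n-1}^{+}\cup K_{n-\hat n-1})$ with $\hat n$ even and $2\le\hat n\le n-4$, and the point is that $F_3=K_1\vee(K_2\cup K_{n-3})$ is precisely the member of this family with $\hat n=2$, since $K_1^{+}\cong K_2$. Hence the whole claim amounts to saying that, along this one-parameter family, the spectral radius is maximized at $\hat n=2$ (strictly so once $n\ge 10$), while the two small orders are handled by hand.

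For (a) and (b) only finitely many graphs occur: for $n=6$ the single graph $K_1\vee(K_2\cup K_3)$, and for $n=8$ the two graphs $K_1\vee(K_2\cup K_5)$ and $K_1\vee(K_3^{+}\cup K_3)$. In each case I would take the obvious equitable partition (the central vertex and the pendant vertex as singletons, each clique as one class), invoke Lemma~\ref{lem2.4} to express $\rho(G)$ as the largest root of the corresponding $3\times3$ or $5\times5$ quotient matrix, and compare it with the values $\rho(F_1)\approx3.6262$ and $\rho(F_2)\approx5.1757$ already recorded in the proof of Claim~\ref{claim1}. The only borderline instance is $K_1\vee(K_2\cup K_5)$, where one compares the two cubic characteristic polynomials governing $K_1\vee(K_2\cup K_5)$ and $F_2=K_3\vee(K_2\cup3K_1)$ directly; every other case has comfortably smaller spectral radius.

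For (c), let $n\ge10$; if $\hat n=2$ then $G=F_3$ and equality holds, so assume $\hat n\ge4$. Write $z$ for the central vertex, $p$ for the pendant vertex of $K_{\hat n-1}^{+}$, $a$ for the vertex of $K_{\hat n-1}$ to which $p$ is attached, $B=V(K_{\hat n-1})\setminus\{a\}$ and $C=V(K_{n-\hat n-1})$. The partition $\pi=(\{z\},\{p\},\{a\},B,C)$ is equitable, so by Lemma~\ref{lem2.4} $\rho(G)$ is the largest eigenvalue of the resulting $5\times5$ quotient matrix $B_G$, whose characteristic polynomial $\Phi(B_G,x)$ I would write out explicitly as a polynomial in $x$, $n$, $\hat n$. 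On the other side, the quotient matrix of $F_3$ relative to $(\{z\},V(K_2),V(K_{n-3}))$ gives $\Phi(B_{F_3},x)=x^3-(n-3)x^2-3x+3n-11$, whose largest root $\rho_3:=\rho(F_3)$ satisfies $\rho_3>n-3=\rho(K_{n-2})$ by Lemma~\ref{lem2.1}. To finish it suffices to establish (i) the second-largest root of $B_G$ is below $\rho_3$, and (ii) $\Phi(B_G,\rho_3)>0$: since $\Phi(B_G,x)$ is monic of degree $5$ with only real roots, (i) keeps $\rho_3$ out of the open interval between its two largest roots, and then (ii) forces $\rho_3$ above the largest one, i.e. $\rho_3>\rho(G)$. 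For (i), deleting $z$ leaves $G-z=K_{\hat n-1}^{+}\cup K_{n-\hat n-1}$, so Cauchy interlacing (Lemma~\ref{lem2.5}) bounds the second eigenvalue of $G$ by $\lambda_1(G-z)=\max\{\rho(K_{\hat n-1}^{+}),\,n-\hat n-2\}<\max\{\hat n-1,\,n-\hat n-1\}\le n-5<\rho_3$ (using $4\le\hat n\le n-4$ together with $\rho(K_{\hat n-1}^{+})<\rho(K_{\hat n})=\hat n-1$ from Lemma~\ref{lem2.1}), and the eigenvalues of $B_G$ are among those of $A(G)$ by Lemma~\ref{lem2.4}; for (ii), I would use $\Phi(B_{F_3},\rho_3)=0$ (i.e. $\rho_3^{3}=(n-3)\rho_3^{2}+3\rho_3-(3n-11)$) to reduce $\Phi(B_G,\rho_3)$ to an expression of degree at most $2$ in $\rho_3$ with coefficients in $n,\hat n$, and then show it is positive over the range $\rho_3>n-3$, $4\le\hat n\le n-4$, $n\ge10$ by a monotonicity estimate exactly in the style of the auxiliary functions $\Psi_1,\Psi_2$ in the proof of Theorem~\ref{thm1.4}.

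The hard part is purely computational: the $5\times5$ matrix $B_G$ and the polynomial $\Phi(B_G,x)$ are bulky, and verifying the sign in (ii) uniformly over the two-parameter range $(\hat n,n)$ requires a patient estimate rather than a one-line argument. If one is willing to invoke the standard bound $\rho(G)\le\sqrt{2|E(G)|-n+1}$ for connected graphs, there is a shortcut: Fact~\ref{fact2}(c) gives $|E(G)|\le\binom{n-2}{2}-2n+15$ when $\hat n\ge4$, hence $\rho(G)\le\sqrt{n^{2}-10n+37}<n-3<\rho_3$ for $n\ge10$; but since that inequality is not among the lemmas collected above, I would keep the quotient-matrix route as the main line of proof and record the shortcut only as a remark.
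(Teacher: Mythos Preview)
Your plan is correct and very close to the paper's own argument, with one small but genuine difference in part (c). The paper also uses the $5$-part equitable partition $\{z\}\cup\{v\}\cup\{u\}\cup V(K_{\hat n-2})\cup V(K_{n-\hat n-1})$ and writes the degree-$5$ polynomial $\Phi(Q_G,x)$, but instead of evaluating $\Phi(Q_G)$ at $\rho(F_3)$ it evaluates the \emph{other} polynomial at $\rho(G)$: it sets $\hat n=2$ in the same $5$-part template to obtain a degree-$5$ polynomial $\Phi(Q_{F_3},x)$ with largest root $\rho(F_3)$, computes the difference $\Phi(Q_{F_3},\eta_1)-\Phi(Q_G,\eta_1)=(\hat n-2)\,f_1(\eta_1)$ at $\eta_1=\rho(G)$, and shows $f_1(\eta_1)<0$ by the same style of monotonicity estimate you outline. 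The payoff of that direction is that \emph{no interlacing step is needed}: for a monic odd-degree polynomial, every point where it is negative lies below its largest root, so $\Phi(Q_{F_3},\eta_1)<0$ immediately gives $\eta_1<\rho(F_3)$. Your direction (show $\Phi(B_G,\rho_3)>0$) is equally valid but forces you to locate $\rho_3$ relative to the lower roots of $\Phi(B_G)$, hence your step (i); your Cauchy-interlacing argument for (i) is fine. Your Hong-bound shortcut via Fact~\ref{fact2}(c) is a nice observation and is indeed shorter than either route, though as you say it lies outside the lemmas collected in the paper.
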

\begin{proof}[\bf Proof of Claim \ref{claim2}] Note that $n-\hat{n}-1\geqslant3$. Then $2\leqslant\hat{n}\leqslant n-4$.

(a) If $n=6$, then $\hat{n}=2$ and so $G=K_1\vee(K_2\cup K_3).$  By a direct calculation, we have $\rho(K_1\vee(K_2\cup K_3))\thickapprox3.2618<3.6262\thickapprox\rho(F_1)$, as desired.

(b) If $n=8$, then $\hat{n}=2,4$ and so $G\in\{K_1\vee(K_2\cup K_5), K_1\vee(K_3^+\cup K_3)\}$. By some computations, we have $\rho(K_1\vee(K_2\cup K_5))\thickapprox5.0874$, $\rho(K_1\vee(K_3^+\cup K_3))\thickapprox3.8704$ and $\rho(F_2)\thickapprox5.1757$, as desired.

(c) Clearly, $G=F_3$ if $\hat{n}=2$. In what follows, we are show $\rho(G)<\rho(F_3)$ for $\hat{n}\geqslant4$.

According to the partition $V(G)=V(K_1)\cup V(K_{\hat{n}-2})\cup\{u\}\cup\{v\}\cup V(K_{n-\hat{n}-1})$, the quotient matrix of $A(G)$ is
$$
Q_G=\left(
  \begin{array}{ccccc}
    0 & \hat{n}-2 & 1 & 1 & n-\hat{n}-1 \\
    1 & \hat{n}-3 & 1 & 0 & 0\\
    1 & \hat{n}-2 & 0 & 1 & 0\\
    1 & 0 & 1 & 0 &0 \\
    1 & 0 & 0 & 0 &n-\hat{n}-2
  \end{array}
\right).
$$
It is easy to check that the characteristic polynomial of $Q_G$ is
\begin{align*}
\Phi(Q_G,x)=&x^5-(n-5)x^4+(n\hat{n}-\hat{n}^2-4n+8)x^3+(3n\hat{n}-3\hat{n}^2-4n-2)x^2\\
             &+(6n-3\hat{n}-19)x-3\hat{n}n+3\hat{n}^2+9n-4\hat{n}-15.
\end{align*}
Clearly, the partition $V(G)=V(K_1)\cup V(K_{\hat{n}-2})\cup\{u\}\cup\{v\}\cup V(K_{n-\hat{n}-1})$ is equitable. By Lemma \ref{lem2.4}, the largest root, say $\eta_1$, of $\Phi(Q_G,x)=0$ equals $\rho(G)$. By plugging the value $2$ into $\hat{n}$ of $\Phi(Q_G,x)$, we have
$$
\Phi(Q_{F_3},x)=x^5-(n-5)x^4-(2n-4)x^3-(14-2n)x^2-(25-6n)x+3n-11.
$$
Obviously, the largest root of $\Phi(Q_{F_3},x)=0$ equals $\rho(F_3)$.
By plugging the value $\eta_1$ into $x$ of $\Phi(Q_{F_3},x)-\Phi(Q_G,x)$, we have
$$
\Phi(Q_{F_3},\eta_1)-\Phi(Q_G,\eta_1)=(\hat{n}-2)[-(n-\hat{n}-2)\eta_1^3-(3n-3\hat{n}-6)\eta_1^2+3\eta_1+3n-3\hat{n}-2].
$$
Let $f_1(x)=-(n-\hat{n}-2)x^3-(3n-3\hat{n}-6)x^2+3x+3n-3\hat{n}-2$ be a real function in $x$, where $x\in[n-\hat{n}-1,+\infty)$.

Consider the second derivative of $f_1(x)$ for $x\in [n-\hat{n}-1,+\infty)$. We have
$f_1''(x)=-6(x+1)(n-\hat{n}-2)<0$ for $x\geqslant n-\hat{n}-1$.
Therefore, $f_1'(x)$ is a decreasing function for $x\in[n-\hat{n}-1,+\infty)$. Consequently, for $x\geqslant n-\hat{n}-1$, we have
$f_1'(x)\leqslant f_1'(n-\hat{n}-1)=-3n^3+(9\hat{n}+6)n^2+(-9\hat{n}^2-12\hat{n}+3)n+3\hat{n}^3+ 6\hat{n}^2-3\hat{n}-3.$ Let $f_2(x)=-3x^3+(9\hat{n}+6)x^2+(-9\hat{n}^2-12\hat{n}+3)x+3\hat{n}^3+6\hat{n}^2-3\hat{n}-3$ be a real function in $x$, where $x\in[\hat{n}+4,+\infty).$ Consider the second derivative of $f_2(x)$ for $x\in [\hat{n}+4,+\infty)$, we have $f''_2(x)=-18x+18\hat{n}+12<0$. Then, $f_2'(x)$ is a decreasing function for $x\in[\hat{n}+4,+\infty)$. Thus, for $x\geqslant \hat{n}+4$, we have $f_2'(x)\leqslant f_2'(\hat{n}+4)=-93<0$. Therefore, $f_2(x)$ is a decreasing function for $x\in[\hat{n}+4,+\infty)$. Then $f_1'(x)\leqslant f_2(n)\leqslant f_2(\hat{n}+4)=-87<0$. Then $f_1(x)$ is a decreasing function for $x\in[n-\hat{n}-1,+\infty)$.  Therefore, $f_1(x)\leqslant f_1(n-\hat{n}-1)=-n^4+(4\hat{n}+2)n^3+(-6\hat{n}^2-6\hat{n}+3)n^2+(4\hat{n}^3+6\hat{n}^2-6\hat{n}-2)n-\hat{n}^4-2\hat{n}^3+3\hat{n}^2+2\hat{n}-1$.
Let $f_3(x)=-x^4+(4\hat{n}+2)x^3+(-6\hat{n}^2-6\hat{n}+3)x^2+(4\hat{n}^3+6\hat{n}^2-6\hat{n}-2)x-\hat{n}^4-2\hat{n}^3+3\hat{n}^2+2\hat{n}-1$ be a real function in $x$. Through some computations, we have
$$
f_3(-\infty)<0,\ f_3(\hat{n}-1)=1,\ f_3(\hat{n})=-1,\ f_3(\hat{n}+2)=7,\ f_3(\hat{n}+4)=-89.
$$
Thus, $f_3(x)$ is a decreasing function for $x\in[\hat{n}+4,+\infty)$. Thus $f_3(n)\leqslant f_3(\hat{n}+4)<0$.
Note that $K_{n-\hat{n}}$ is a subgraph of $G$. By Lemma \ref{lem2.1}, we have $\rho(G)=\eta_1>n-\hat{n}-1=\rho(K_{n-\hat{n}})$. Notice that $f_1(n-\hat{n}-1)=f_3(n)<0$. Then $\Phi(Q_{F_3},\eta_1)-\Phi(Q_G,\eta_1)=(\hat{n}-2)f_1(\eta_1)<(\hat{n}-2)f_1(n-\hat{n}-1)<0$. Since $\Phi(Q_G,\eta_1)=0$, we have $\Phi(Q_{F_3},\eta_1)<0$. Thus $\rho(G)<\rho(F_3)$, as desired.
\end{proof}

By Claim \ref{claim2}, we get contradictions. Next, we consider $s\geqslant2$.

If $s\geqslant2$, then $n_q\geqslant1$. By Lemma \ref{lem2.8}, we have $\rho(G)\leqslant \rho(K_s\vee(K_{\hat{n}-1}^+\cup K_{n-s-\hat{n}-q+1}\cup(q-1)K_1))$ with equality if and only if $G=K_s\vee(K_{\hat{n}-1}^+\cup K_{n-s-\hat{n}-q+1}\cup(q-1)K_1)$. Notice that $o(K_s\vee(K_{\hat{n}-1}^+\cup K_{n-s-\hat{n}-q+1}\cup(q-1)K_1)-V(K_s))=o(G-S)\geqslant|S|$. According to the choice of $G$, we have $G=K_s\vee(K_{\hat{n}-1}^+\cup K_{n-s-\hat{n}-q+1}\cup(q-1)K_1)$.

Recall that $q\geqslant s$. It is clear that $K_s\vee(K_{\hat{n}-1}^+\cup K_{n-s-\hat{n}-q+1}\cup(q-1)K_1)$ is a subgraph of $K_s\vee(K_{\hat{n}-1}^+\cup K_{n-2s-\hat{n}+1}\cup(s-1)K_1)$. Together with Lemma \ref{lem2.1} and the choice of $G$, one has $G=K_s\vee(K_{\hat{n}-1}^+\cup K_{n-2s-\hat{n}+1}\cup(s-1)K_1)$.

In what follows, we show that $n-2s-\hat{n}=0$ or $\hat{n}=2$. Suppose that $n-2s-\hat{n}>0$ and $\hat{n}>2$. The vertex set of $G$ may be partitioned as $V(G)=V(K_s)\cup V(K_{\hat{n}-1}-u)\cup V(K_{n-2s-\hat{n}+1})\cup V((s-1)K_1)\cup \{u\}\cup \{v\}$, where $V(K_s)=\{r_1,\ldots,r_s\}$, $V((s-1)K_1)=\{t_1,\ldots,t_{s-1}\}$, $V(K_{\hat{n}-1}-u)=\{u_1,\ldots,u_{\hat{n}-2}\}$, $V(K_{n-2s-\hat{n}+1})=\{v_1,\ldots,v_{n-2s-\hat{n}+1}\}$. Let ${\bf x}$ be the Perron vector of $A(G)$ with respect to $\rho(G)=:\rho$. By Lemma \ref{lem2.2}, ${\bf x}$ takes the same value (say $x_0,x_1,x_2,x_3,x_u$ and $x_v$) on the vertices of $V(K_s),V(K_{n-2s-\hat{n}+1}),V((s-1)K_1),V(K_{\hat{n}-1}-u),u$ and $v$, respectively.
For convenience, put $n-2s-\hat{n}=t$. Let
$$
H=G+\sum_{i=2}^{t+1}v_iu+\sum_{i=1}^{\hat{n}-2} \ \sum_{j=2}^{t+1}u_iv_j-\sum_{i=2}^{t+1}v_iv_1.
$$
Clearly, $H=K_s\vee(K_{n-2s-1}^+\cup sK_1)$ and $H$ satisfies statement (1).
Let ${\bf y}$ be the Perron vector of $A(H)$ corresponding to $\rho(H)=:\rho'$. By Lemma \ref{lem2.2}, ${\bf y}$ takes the same values $y_0,y_1,y_2,y_u$ and $y_v$ on the vertices of $V(K_s), V(sK_1),V(K_{n-2s-1}-u),u$ and $v$, respectively.
By $A(H){\bf y}=\rho'{\bf y}$, we have
\begin{align*}
\rho'y_u=&sy_0+y_v+(n-2s-2)y_2,\ \ \ \ \rho'y_1=sy_0.
\end{align*}
Recall that $n-2s\geqslant\hat{n}+2$. Then $y_u>y_1$.
Together with $A(G){\bf x}=\rho {\bf x}$ and $A(H){\bf y}=\rho'{\bf y}$, we have
\begin{align*}
{\bf y}^T(\rho'-\rho){\bf x}=&{\bf y}^T(A(H)-A(G)){\bf x}\\
                            =&\sum_{i=2}^{t+1}(x_{v_i}y_u+x_uy_{v_i})+\sum_{i=1}^{\hat{n}-2}\  \sum_{j=2}^{t+1}(x_{u_i}y_{v_j}+x_{v_j}y_{u_i})
                -\sum_{i=2}^{t+1}(x_{v_i}y_{v_1}+x_{v_1}y_{v_i})\\
                =&t(x_1y_u+x_uy_2)+t(\hat{n}-2)(x_3y_2+x_1y_2)-t(x_1y_1+x_1y_2)\\
                =&t[x_1y_u+x_uy_2+(\hat{n}-2)(x_3y_2+x_1y_2)-(x_1y_1+x_1y_2)]\\
                >&t[x_1(y_u-y_1)+(\hat{n}-2)x_3y_2+(\hat{n}-3)x_1y_2]\tag{As $x_u>0$ and $y_2>0$}\\
                >&t[(\hat{n}-2)x_3y_2+(\hat{n}-3)x_1y_2]>0.\tag{As $y_u>y_1$}
\end{align*}
Consequently, $\rho'-\rho>0$ when $\hat{n}\geqslant4$ and $n-2s-\hat{n}\geqslant2$, a contradiction to the choice of $G$.

Therefore, $n-2s-\hat{n}=0$ or $\hat{n}=2$. Thus $G=K_s\vee(K_{n-2s-1}^+\cup sK_1)$ or $K_s\vee(K_2\cup K_{n-2s-1}\cup (s-1)K_1)$.
Next, we proceed by showing the following claim.
\begin{claim}\label{claim3}Let $G=K_s\vee(K_2\cup K_{n-2s-1}\cup (s-1)K_1)$ and let $F_i\ ( i\in\{1,2,3\})$ be the graphs defined as before.
\begin{wst}
\item[{\rm (a)}] If $n=6$, then $G=F_1$;
\item[{\rm (b)}] If $n=8$, then $\rho(G)\leqslant\rho(F_2)$ with equality if and only if $G=F_2$;
\item[{\rm (c)}] If $n\geqslant10$, then $\rho(G)<(F_3)$.
\end{wst}
\end{claim}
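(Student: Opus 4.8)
The plan is to split along the three regimes for $n$, with parts (a) and (b) essentially forced by the constraints $s\geqslant2$ and $n\geqslant2s+2$, i.e. $2\leqslant s\leqslant\frac{n-2}{2}$. For (a), $n=6$ forces $s=2$, so $K_{n-2s-1}=K_1$ and $G=K_2\vee(K_2\cup K_1\cup K_1)=F_1$. For (b), $n=8$ forces $s\in\{2,3\}$: if $s=3$ then again $K_{n-2s-1}=K_1$ and $G=K_3\vee(K_2\cup K_1\cup2K_1)=F_2$; if $s=2$ then $G=K_2\vee(K_2\cup K_3\cup K_1)$, and I would take its equitable partition $V(K_2)\cup V(K_2)\cup V(K_3)\cup V(K_1)$, use Lemma \ref{lem2.4} to identify $\rho(G)$ with the largest root of the quartic characteristic polynomial of the $4\times 4$ quotient matrix, and check that this quartic and its first two $x$-derivatives are positive at $x=5$ while its third $x$-derivative is positive on $[5,+\infty)$; climbing this derivative ladder shows the quartic has no root in $[5,+\infty)$, hence $\rho(G)<5$. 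Since the cubic characteristic polynomial of the quotient matrix of $F_2$ (with respect to $V(K_3)\cup V(K_2)\cup V(3K_1)$) is negative at $x=5$, this yields $\rho(F_2)>5>\rho(G)$, contradicting $\rho(G)\geqslant\rho(F_2)$.

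For (c), fix $n\geqslant10$ and write $G=G_s:=K_s\vee(K_2\cup K_{n-2s-1}\cup(s-1)K_1)$ with $2\leqslant s\leqslant\frac{n-2}{2}$; the target is the chain $\rho(G_s)<n-3<\rho(F_3)$, which contradicts $\rho(G_s)=\rho(G)\geqslant\rho(F_3)$. The right-hand inequality is immediate from Lemma \ref{lem2.1}: the vertex of $K_1$ in $F_3=K_1\vee(K_2\cup K_{n-3})$ together with $K_{n-3}$ induces $K_{n-2}$, a proper subgraph of $F_3$, so $\rho(F_3)>\rho(K_{n-2})=n-3$. For the left-hand inequality I would take the equitable partition $V(G_s)=V(K_s)\cup V(K_{n-2s-1})\cup V(K_2)\cup V((s-1)K_1)$; by Lemma \ref{lem2.4} the largest root of the characteristic polynomial $\Phi(B_G,x)$ of the corresponding $4\times 4$ quotient matrix $B_G$ equals $\rho(G_s)$, and $\Phi(B_G,x)$ is a monic quartic in $x$ with coefficients explicit in $n$ and $s$. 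It then suffices to prove $\Phi(B_G,x)>0$ for all $x\geqslant n-3$. Since the trace of $B_G$ equals $n-s-2$, the third $x$-derivative of $\Phi(B_G,x)$ is $24x-6(n-s-2)$, which is positive for $x\geqslant n-3$ whenever $n\geqslant10$; hence, exactly as in the proof of Claim \ref{claim2}(c), it is enough to verify that $\Phi(B_G,n-3)$ and its first two $x$-derivatives at $x=n-3$ are positive, and then climb the ladder. A direct computation gives $\Phi(B_G,n-3)=(s-1)(n-2)(n-3)(n-4)-s(s-1)(2s-1)(n-4)-2s(2s-1)(n-3)$, which one checks is positive over the range $n\geqslant10$, $2\leqslant s\leqslant\frac{n-2}{2}$ (it has highest order in $n$ with positive leading term, and the extreme cases $s=2$, where $K_{n-2s-1}$ is largest, and $s=\frac{n-2}{2}$, where $K_{n-2s-1}$ degenerates to $K_1$, are checked directly); the two derivative inequalities are handled the same way. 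Concluding $\rho(G_s)<n-3$ closes the chain.

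The main obstacle is the two-parameter polynomial bookkeeping in part (c): $\Phi(B_G,x)$ and its derivatives carry both $n$ and $s$, and the three ladder inequalities must hold uniformly over $2\leqslant s\leqslant\frac{n-2}{2}$ and $n\geqslant10$. The most delicate case is $s=2$, where $K_{n-2s-1}=K_{n-5}$ is as large as possible and $\rho(G_s)$ is nearest to $n-3$; it may be necessary to supplement the uniform estimates with a separate check of a few small orders (e.g.\ $n\in\{10,12,14\}$). Beyond that, everything reduces to the quotient-matrix / derivative-ladder technique already used for Claims \ref{claim1} and \ref{claim2}.
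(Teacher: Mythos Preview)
Your outline for (a) and (b) matches the paper's, except that for the $s=2$ subcase of (b) the paper simply reports $\rho(K_2\vee(K_2\cup K_3\cup K_1))\approx 4.7131$ and $\rho(F_2)\approx 5.1757$ rather than running a derivative ladder; either is fine.

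For (c) your route is correct but genuinely different. You go for the uniform sandwich $\rho(G_s)<n-3<\rho(F_3)$ via a derivative ladder on the quartic $\Phi(B_G,x)$ at $x=n-3$. The paper instead splits. When $n=2s+2$ (so $K_{n-2s-1}=K_1$) it drops to a $3\times 3$ quotient matrix and proves the same sandwich $\rho(G)<2s-1=n-3<\rho(F_3)$ by one cubic evaluation. When $n\geqslant 2s+4$ it does \emph{not} pass through $n-3$ at all: it writes the cubic $\Phi(T_{F_3},x)$ for $F_3$, computes the difference $x\Phi(T_{F_3},x)-\Phi(T_G,x)=(s-1)g_1(x)$ explicitly, shows $g_1$ is decreasing on $[n-s-2,\infty)$ with $g_1(n-s-2)<0$, and concludes $\Phi(T_{F_3},\rho(G))<0$ directly from $\rho(G)>n-s-2$. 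Your approach is tidier in that it never touches the characteristic polynomial of $F_3$ and needs no case split, at the cost of three two-parameter inequalities at $x=n-3$; the paper's approach trades a unified statement for a one-parameter monotonicity argument that is slightly easier to close. One correction: your heuristic that $s=2$ is always the tightest case is wrong at the small end --- for $n=10$ one computes $\rho(G_4)\approx 6.76>\rho(G_2)\approx 6.4$ --- so when you finalize the ladder you must handle the boundary $s=\frac{n-2}{2}$ with the same care as $s=2$ (your factored formula gives $\Phi(B_G,n-3)=2s(2s-1)(s^2-4s+2)$ there, which is indeed positive for $s\geqslant 4$).
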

\begin{proof}[\bf Proof of Claim \ref{claim3}] Recall that $n\geqslant2s+2$ and $s\geqslant2$. Then $2\leqslant s\leqslant\frac{n-2}{2}$.

(a) If $n=6$, then $s=2$ and so $G=F_1$, as desired.

(b) If $n=8$, then $s=2,3$ and so $G\in\{K_2\vee(K_2\cup K_3\cup K_1),F_2\}$. By some computations, we have $\rho(K_2\vee(K_2\cup K_3\cup K_1))\thickapprox4.7131$ and $\rho(F_2)\thickapprox5.1757$, as desired.

(c) For $n\geqslant10$, we proceed by considering the following two cases.

{\bf Case 1.} $n=2s+2$. In this case, one has $G=K_s\vee(K_2\cup sK_1)$ and $s\geqslant4$. Next, we shall show that $\rho(G)<2s-1<\rho(F_3)$ for $s\geqslant4$ ($n\geqslant10$).
According to the partition $V(K_s\vee(K_2\cup sK_1))=V(K_s)\cup V(K_2)\cup V(sK_1)$, the quotient matrix of $A(G)$ is
$$
N_G=
\left(
  \begin{array}{ccc}
  s-1 &2 &s\\
  s &1 &0 \\
  s &0 &0 \\
  \end{array}
\right).
$$
By a simple calculation, the characteristic polynomial of $N_G$ is
$\Phi(N_G,x)=x^3-sx^2-(s^2+s+1)x+s^2.$
Clearly, the vertex partition of $V(G)$ is equitable. By Lemma \ref{lem2.4}, the largest root, say $\eta_2$, of $\Phi(N_G,x)=0$ equals $\rho(G)$.
By a simple computation, we have
\[\label{eq:5.1}
\Phi(N_G,-\infty)<0,\ \Phi(N_G,0)=s^2>0,\ \Phi(N_G,s)=-s^3-s<0,
\]
and
$$\Phi(N_G,2s-1)=2s^3-8s^2+4s=2s(s^2-4s+2)=2s(s-2-\sqrt{2})(s-2+\sqrt{2})>0.$$
Consequently, we have $s<\rho(G)=\eta_2<2s-1$. Note that $K_{2s}$ is a proper subgraph of $F_3$, by Lemma~\ref{lem2.1}, we have $\rho(F_3)>2s-1=\rho(K_{2s})$. Thus, $\rho(G)<2s-1<\rho(F_3)$, as desired.

{\bf Case 2.} $n\geqslant2s+4$. According to the partition $V(G)=V(K_s)\cup V(K_{2})\cup V(K_{n-2s-1})\cup V((s-1)K_1)$, the quotient matrix of $A(G)$ is
$$
T_G=
\left(
  \begin{array}{cccc}
  s-1 &2 &n-2s-1 &s-1\\
  s &1 &0 &0\\
  s &0 &n-2s-2 &0\\
  s &0 &0 &0\\
  \end{array}
\right).
$$
By a direct computation, the characteristic polynomial of $T_G$ is
\begin{align*}
\Phi(T_G,x)=&x^4+(s+2-n)x^3-(s^2+s+1)x^2+(s^2n-2s^3+sn-3s^2+n-4s-2)x\\
&-s^2n+2s^3+sn-2s.
\end{align*}
Obviously, the partition $V(G)=V(K_s)\cup V(K_{2})\cup V(K_{n-2s-1})\cup V((s-1)K_1)$ is equitable. By Lemma~\ref{lem2.4}, the largest root, say $\eta_3$, of $\Phi(T_G,x)=0$ satisfies $\eta_3=\rho(G)$.

According to the partition $V(F_3)=V(K_1)\cup V(K_2)\cup V(K_{n-3})$, the quotient matrix of $A(F_3)$ is
$$
\left(
  \begin{array}{ccc}
    0 & 2 & n-3 \\
    1 & 1 & 0 \\
    1 & 0 & n-4 \\
  \end{array}
\right).
$$
It is easy to check that the characteristic polynomial is
\[\label{eq:5.2}
\Phi(F_3,x)=x^3+(3-n)x^2-3x+3n-11.
\]
Clearly, the partition $V(F_3)=V(K_1)\cup V(K_2)\cup V(K_{n-3})$ is equitable. By Lemma \ref{lem2.4}, the largest root of $\Phi(F_3,x)=0$ equals $\rho(F_3)$.
Next, we are to show $\Phi(T_{F_3},\eta_3)<0$ for $s\geqslant2$ in order to prove $\rho(G)<\rho(F_3)$.

By plugging the value $\eta_3$ into $x$ of $\psi_1(x)=x\Phi(T_{F_3},x)-\Phi(T_G,x)$, we have
$$
\psi_1(\eta_3)=(s-1)[-\eta_3^3+(s+2)\eta_3^2+(2s^2-sn-2n+5s+9)\eta_3+sn-2s^2-2s].
$$
Let $g_1(x)=-x^3+(s+2)x^2+(2s^2-sn-2n+5s+9)x+sn-2s^2-2s$ be a real function in $x$, where $x\in[n-s-2,+\infty)$.

Consider the second derivative of $g_1(x)$ for $x\in [n-s-2,+\infty)$. We have
\begin{align*}
g_1''(x)=&-6x+2s+4\\
        \leqslant&-6(n-s-2)+2s+4\tag{As $x\geqslant n-s-2$}\\
        =&-6n+8s+16\\
        \leqslant&-6(2s+4)+8s+16\tag{As $n\geqslant2s+4$}\\
        =&-4s-8<0.
\end{align*}
Therefore, $g_1'(x)$ is a decreasing function for $x\in[n-s-2,+\infty)$. Consequently, for $x\geqslant n-s-2$, we have
$$g_1'(x)\leqslant g_1'(n-s-2)=-3n^2+(7s+14)n-3s^2-15s-11.$$
Let $g_2(x)=-3x^2+(7s+14)x-3s^2-15s-11$ be a real function in $x$, where $x\in[2s+4,+\infty)$. By a simple computation, we have
$$
g_2(0)=-3s^2-15s-11<0,\  g_2(s+2)=s^2+s+5>0
$$
and
$$
g_2(2s+4)=-s^2-7s-3<0.
$$
Thus $g_2(x)$ is a monotonically decreasing function for $x\in[2s+4,+\infty)$. Then $g_1'(x)\leqslant g_1'(n-s-2)=g_2(n)\leqslant g_2(2s+4)<0$ for $x\in[n-s-2,+\infty)$. Consequently, $g_1(x)$ is a monotonically decreasing function for $x\in[n-s-2,+\infty)$.

Now we prove that $g_1(\eta_3)<0$.
Recall that $\rho(G)=\eta_3>n-s-2$. Together with the fact that $g_1(x)$ is decreasing on $x\in[n-s-2,+\infty)$, we have $g_1(\eta_3)<g_1(n-s-2)=-n^3+(3s+6)n^2-(2s^2+10s+7)n+s^2+3s-2.$

Let $g_3(x)=-x^3+(3s+6)x^2-(2s^2+10s+7)x+s^2+3s-2$ be a real function in $x$, where $x\in[2s+4,+\infty).$  It is routine to check that the derivative of $g_3(x)$ is $$g_3'(x)=-3x^2+2(3s+6)x-2s^2-10s-7.$$ By a simple computation, we have
$$g_3'(-\infty)<0,\  g_3'(s)=s^2+2s-7>0,\  g_3'(2s+4)=-2s^2-10s-7<0$$

Thus, $g_3(x)$ is a monotonically decreasing function for $x\in[2s+4,+\infty)$. Thus $g_1(\eta_3)<g_3(n)\leqslant g_3(2s+4)=-s^3-3s+1<0$, as desired.
Then $\psi_1(\eta_3)=\eta_3\Phi(T_{F_3},\eta_3)-\Phi(T_G,\eta_3)=(s-1)g_1(\eta_3)<0.$ Since $\Phi(T_G,\eta_3)=0$. Thus $\Phi(T_{F_3},\eta_3)<0$ and so $\rho(G)<\rho(F_3)$.

Together with Cases 1 and 2, we complete the proof of (c).
\end{proof}

For convenience, let $W_1=K_{\frac{n-2}{2}}\vee(K_2\cup \frac{n-2}{2}K_1)$ and $W_2=K_2\vee(K_{n-5}^+\cup 2K_1)$.
Then we have the following claim.
\begin{claim}\label{claim4}Let $G=K_s\vee(K_{n-2s-1}^+\cup sK_1)$ and let $W_1$, $W_2$ be the graphs defined as above.
\begin{wst}
\item[{\rm (a)}] If $6\leqslant n\leqslant 12$, then $\rho(G)\leqslant\rho(W_1)$ with equality if and only if $G=W_1$;
\item[{\rm (b)}] If $n\geqslant14$, then $\rho(G)\leqslant\rho(W_2)$ with equality if and only if $G=W_2$.
\end{wst}
\end{claim}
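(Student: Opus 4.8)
The plan is to realise the spectral radius of each member of the family as the largest root of the characteristic polynomial of a small equitable quotient matrix, and then to compare these roots by the sign-tracking technique already used in the proofs of Claims~\ref{claim1}--\ref{claim3}. Write $G_s:=K_s\vee(K_{n-2s-1}^+\cup sK_1)$ for $2\le s\le\frac{n-2}{2}$; note that $G_2=W_2$, and $G_{(n-2)/2}=W_1$ since $K_{n-2s-1}^+$ degenerates to $K_2$ when $s=\frac{n-2}{2}$. For $2\le s\le\frac{n-4}{2}$, letting $v$ be the pendant vertex of $K_{n-2s-1}^+$ and $u$ its neighbour inside $K_{n-2s-1}$, the partition $V(G_s)=V(K_s)\cup V(K_{n-2s-2})\cup\{u\}\cup\{v\}\cup V(sK_1)$ is equitable, so by Lemma~\ref{lem2.4} $\rho(G_s)$ is the largest root of the characteristic polynomial $\Phi_s(x)$ of
$$
Q_s=\begin{pmatrix}
s-1 & n-2s-2 & 1 & 1 & s\\
s & n-2s-3 & 1 & 0 & 0\\
s & n-2s-2 & 0 & 1 & 0\\
s & 0 & 1 & 0 & 0\\
s & 0 & 0 & 0 & 0
\end{pmatrix};
$$
I would compute $\Phi_s(x)$ once, a quintic in $x$ with coefficients polynomial in $n$ and $s$. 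For $W_1$ the coarser equitable partition $V(K_{(n-2)/2})\cup V(K_2)\cup V(\frac{n-2}{2}K_1)$ gives a cubic $\Phi_{W_1}(x)$, which I would use in its place.

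For part~(b), where $n\ge14$ and $s$ runs over the unbounded set $\{2,\dots,\frac{n-2}{2}\}$, I would show $\rho(G_s)<\rho(W_2)$ for every $s\ge3$. Since $\Phi_s(x)-\Phi_2(x)$ vanishes identically at $s=2$, it factors as $(s-2)\,\Xi(x)$ with $\Xi$ of degree at most four in $x$ (its remaining coefficients polynomial in $n$ and $s$); using $\Phi_2(\rho(W_2))=0$ this reduces the goal to $\Xi(\rho(W_2))>0$. Because $K_{n-3}=K_2\vee K_{n-5}$ is a proper subgraph of $W_2$, Lemma~\ref{lem2.1} gives $\rho(W_2)>n-4$, and together with $\rho(W_2)<n-1$ (as $W_2$ is not regular) this confines $\rho(W_2)$ to a bounded interval on which I would prove $\Xi>0$ by the usual route: bound $\Xi''$, then $\Xi'$, then $\Xi$ at the left endpoint, exactly as in Claims~\ref{claim1}--\ref{claim3}. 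Finally, $\Phi_s(\rho(W_2))>0$ yields $\rho(G_s)<\rho(W_2)$ once $\rho(W_2)$ is known to exceed the second-largest root of $\Phi_s$, which follows from a Cauchy-interlacing bound (Lemma~\ref{lem2.5}) applied to the $4\times4$ principal submatrix of the symmetrisation of $Q_s$ obtained by deleting the $K_s$-block: that bound is $\rho(K_{n-2s-1}^+)<n-2s-1\le n-7<\rho(W_2)$.

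For part~(a), the finitely many even $n\in\{6,8,10,12\}$ with $2\le s\le\frac{n-2}{2}$ ranging over a bounded set, I would show $\rho(G_s)\le\rho(W_1)$, strict unless $s=\frac{n-2}{2}$. The comparison $\rho(G_s)<\rho(W_1)$ for $s<\frac{n-2}{2}$ goes through the same device (substitute the largest root of $\Phi_{W_1}$ into $\Phi_s$, use $K_{n/2}=K_{(n-2)/2}\vee K_2\subseteq W_1$ to get $\rho(W_1)>\frac n2$, and an interlacing bound on the second root of $\Phi_s$), and for these few values it amounts to a short direct check of the relevant characteristic polynomials and their largest roots. Hidden inside is the genuinely delicate point, namely $\rho(W_1)$ versus $\rho(W_2)$: substituting $\rho(W_1)$ into $\Phi_2$ and analysing the sign as a function of $n$ (root locations pinned by $\rho(W_1)>\frac n2$ and $\rho(W_2)>n-4$), one must show this sign flips precisely between $n=12$ and $n=14$, which together with the argument of part~(b) completes both statements.

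I expect this threshold to be the main obstacle: a quick numerical check shows that at $n=12$ the spectral radii $\rho(W_1)$ and $\rho(W_2)$ differ by only about a tenth, so the comparison cannot rely on any crude estimate and requires the exact characteristic polynomials together with a careful sign analysis of $\Phi_2(\rho(W_1))$ across the narrow band of small $n$, with the smallest cases inspected by hand. The parametric estimate of $\Xi(\rho(W_2))$ in part~(b), where $n$ and $s$ must be kept moving simultaneously, is the other place where the bookkeeping is heaviest.
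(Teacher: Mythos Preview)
Your plan matches the paper's framework (equitable quotient matrices, explicit characteristic polynomials, sign analysis of their difference), and part~(a) is indeed disposed of in the paper by the same finite direct check you propose. For part~(b) there is one technical twist worth flagging: the paper evaluates $\Phi_{W_2}$ at $\eta_4=\rho(G_s)$ rather than, as you do, $\Phi_s$ at $\rho(W_2)$. The advantage of the paper's direction is that $\Phi_{W_2}(\eta_4)<0$ immediately yields $\rho(G_s)<\rho(W_2)$, since a monic polynomial negative at a point has a root above it; this makes your Cauchy-interlacing step unnecessary. The price is that the paper's lower bound $\eta_4>n-s-2$ depends on $s$, which forces a split into the three sub-cases $n\ge 2s+6$, $n=2s+4$, $n=2s+2$; in the last two the paper bypasses the polynomial comparison entirely and just checks $\rho(G_s)<n-4<\rho(W_2)$ from sign changes of the cubic or quintic. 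In particular, the ``delicate threshold'' you anticipate for $\rho(W_1)$ versus $\rho(W_2)$ near $n=12$ never materialises in the paper: for $n\ge 14$ (hence $s=(n-2)/2\ge 6$) one simply verifies $\Phi_{W_1}(n-4)>0$, giving $\rho(W_1)<n-4<\rho(W_2)$, while the small $n$ fall under the direct calculation of part~(a). Your route is correct, but the paper's choice of evaluation point is slightly cleaner in that it trades your interlacing lemma for an extra case split with elementary bounds.
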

\begin{proof}[\bf Proof of Claim \ref{claim4}] Note that $s\geqslant2$ and $n-2s-1\geqslant1$, we have $2\leqslant s\leqslant\frac{n-2}{2}.$ Similar to the proofs of (a) and (b) of Claim 3, by direct calculations, one has (a) is true. Next, we prove (b) by considering the following three cases.

{\bf Case 1.} $n\geqslant2s+6$.\ Clearly, $G=W_2$ when $s=2$. In what follows, we are to show $\rho(G)<\rho(W_2)$ for $s\geqslant3$. According to the partition $V(G)=V(K_s)\cup V(K_{n-2s-2})\cup \{u\}\cup \{v\}\cup V(sK_1)$, the quotient matrix of $A(G)$ is
$$
P_G=
\left(
  \begin{array}{ccccc}
  s-1 &n-2s-2 &1 &1 &s\\
  s &n-2s-3 &1 &0 &0\\
  s &n-2s-2 &0 &1 &0\\
  s &0 &1 &0 &0\\
  s &0 &0 &0 &0\\
  \end{array}
\right).
$$
By a direct computation, the characteristic polynomial of $P_G$ is
\begin{align}\label{eq:5.3}
\Phi(P_G,x)=&x^5-(n-s-4)x^4-(s^2+2n-s-4)x^3+(s+1)(ns-2s^2-3s-2)x^2\notag\\
&-(2s^3-ns^2+3s^2-ns+5s-n+3)x-ns^2+2s^3+3s^2.
\end{align}
Obviously, the partition $V(G)=V(K_s)\cup V(K_{n-2s-2})\cup \{u\}\cup \{v\}\cup V(sK_1)$ is equitable. By Lemma~\ref{lem2.4}, the largest root, say $\eta_4$, of $\Phi(P_G,x)=0$ satisfies $\eta_4=\rho(G)$.
Note that $K_{n-s-1}$ is a proper subgraph of $G=K_s\vee(K_{n-2s-1}^+\cup sK_1)$, by Lemma \ref{lem2.1}, we have $\eta_4>n-s-2$.

According to the partition $V(W_2)=V(K_2)\cup V(K_{n-6})\cup \{u\}\cup \{v\}\cup V(2K_1)$, the quotient matrix of $A(W_2)$ is
$$
P_{W_2}=
\left(
  \begin{array}{ccccc}
  1 &n-6 &1 &1 &2\\
  2&n-7 &1 &0 &0\\
  2 &n-6 &0 &1 &0\\
  2 &0 &1 &0 &0\\
  2 &0 &0 &0 &0\\
  \end{array}
\right).
$$
It is easy to check that the characteristic polynomial of $P_{W_2}$ is
\[\label{eq:5.4}
\Phi(P_{W_2},x)=x^5-(n-6)x^4-(2n-2)x^3-(48-6n)x^2-(41-7n)x-4n+28.
\]
Clearly, the partition $V(F_1)=V(K_2)\cup V(K_{n-6})\cup \{u\}\cup \{v\}\cup V(2K_1)$ is equitable. By Lemma \ref{lem2.4}, the largest root of $\Phi(P_{W_2},x)=0$ equals $\rho(W_2)$. 

By plugging the value $\eta_4$ into $x$ of $\psi_2(x)=\Phi(P_{W_2},x)-\Phi(P_G,x)$, we have
\begin{align*}
\psi_2(\eta_4)=&(s-2)[-\eta_4^4+(s+1)\eta_4^3+(-sn+2s^2-3n+9s+23)\eta_4^2+(-sn+2s^2-3n+7s+19)\eta_4\\
&+sn-2s^2+2n-7s-14].
\end{align*}
Let $f_1(x)=-x^4+(s+1)x^3+(-sn+2s^2-3n+9s+23)x^2+(-sn+2s^2-3n+7s+19)x+sn-2s^2+2n-7s-14$ be a real function in $x$, where $x\in[n-s-2,+\infty)$. Now, we show that $f_1(x)$ is a monotonically decreasing function for $x\in[n-s-2,+\infty)$.

Consider the second derivative of $f_1(x)$ for $x\in [n-s-2,+\infty)$. We have
\begin{align*}
f_1''(x)=&-12x^2+6(s+1)x-2ns+4s^2-6n+18s+46\\
        \leqslant&-12(n-s-2)^2+6(s+1)(n-s-2)-2ns+4s^2-6n+18s+46\\
        =&-12n^2+(28s+48)n-14s^2-48s-14\\
        \leqslant&-12(2s+6)^2+(28s+48)(2s+6)-14s^2-48s-14\\
        =&-6s^2-72s-158<0.
\end{align*}
The first inequality follows by $\frac{6(s+1)}{2\times12}=\frac{s+1}{4}<s+4\leqslant n-s-2$ and the second one follows by $\frac{28s+48}{2\times12}=\frac{7s+12}{6}<2s+6\leqslant n$.
Therefore, $f_1'(x)$ is a decreasing function for $x\in[n-s-2,+\infty)$. Consequently, for $x\geqslant n-s-2$, we have
$$f_1'(x)\leqslant f_1'(n-s-2)=-4n^3+(13s+21)n^2-(12s^2+39s+5)n+3s^3+15s^2-3s-29.$$
Let $f_2(x)=-4x^3+(13s+21)x^2-(12s^2+39s+5)x+3s^3+15s^2-3s-29$ be a real function in $x$, where $x\in[2s+6,+\infty)$. By a simple computation, we have
$$
f_2(-\infty)>0,\  f_2(s)=-3s^2-8s-29<0,\  f_2(s+2)=s^2+2s+13>0,
$$
and
$$
f_2(2s+6)=-s^3-27s^2-139s-167<0
$$
for $s\geqslant3$.
Thus $f_2(x)$ is a monotonically decreasing function for $x\in[2s+6,+\infty)$. Then $f_1'(x)\leqslant f_1'(n-s-2)=f_2(n)\leqslant f_2(2s+6)<0$ for $x\in[n-s-2,+\infty)$, as desired.

Now we prove that $f_1(\eta_4)<0$.
Recall that $\rho(G)=\eta_4>n-s-2$. Together with the fact that $f_1(x)$ is decreasing on $x\in[n-s-2,+\infty)$, we have $f_1(\eta_4)<f_1(n-s-2)=-n^4+(4s+6)n^3-(5s^2+15s-2)n^2+(2s^3+9s^2-13s-33)n+12s^2+36s+16$.

Let $f_3(x)=-x^4+(4s+6)x^3-(5s^2+15s-2)x^2+(2s^3+9s^2-13s-33)x+12s^2+36s+16$ be a real function in $x$, where $x\in[2s+6,+\infty).$  It is routine to check that the derivative of $f_3(x)$ is $$f_3'(x)=-4x^3+3(4s+6)x^2+2(-5s^2-15s+2)x+2s^3+9s^2-13s-33.$$ By a simple computation, we have
$$f_3'(-\infty)>0,\  f_3'(s+1)=-s^2-3s-15<0,\  f_3'(s+2)=s^2+3s+15>0$$
and
$$f_3'(2s+6)=-2s^3-39s^2-185s-225<0.$$
Thus, $f_3(x)$ is a monotonically decreasing function for $x\in[2s+6,+\infty)$. Thus $f_1(\eta_4)<f_3(n)\leqslant f_3(2s+6)=-6s^3-60s^2-168s-110<0$.

Note that $\Phi(P_G,\eta_4)=0.$ Hence, $\psi_2(\eta_4)=\Phi(P_{W_2},\eta_4)=(s-2)f_1(\eta_4)<0.$ Therefore, $\rho(G)<\rho(W_2).$

{\bf Case 2.} $n=2s+4$. In this case, $G=K_s\vee(K_3^+\cup sK_1)$ and $s\geqslant5$. By \eqref{eq:5.3}, we have
$$
\Phi(P_G,x)=x^5-sx^4-(s^2+3s+4)x^3+(s^2-s-2)x^2+(3s^2+s+1)x-s^2.
$$
By a simple computation, one has
$$\Phi(P_G,-\infty)<0,\ \Phi(P_G,-2)=5s^2+2s-10>0,\ \Phi(P_G,0)=-s^2<0,\ \Phi(P_G,1)=2s^2-4s-4>0,$$
$$\Phi(P_G,s+1)=-s^5-4s^4-8s^3-16s^2-14s-4<0,\ \Phi(P_G,2s)=s(8s^4-20s^3-30s^2-7s+2)>0$$
for $s\geqslant5$. Thus $s+1<\rho(G)<2s$. Note that $K_{n-3}$ is a proper subgraph of $W_2$. By Lemma \ref{lem2.1}, we have $\rho(W_2)>n-4=2s$. Thus $\rho(G)<2s<\rho(W_2)$ for $s\geqslant5$, as desired.

{\bf Case 3.} $n=2s+2$. In this case, one has $G=K_s\vee(K_2\cup sK_1)$ and $s\geqslant6$. It is easy to straightly verify that
$$
\Phi(N_G,2s-2)=2s^3-15s^2+20s-6>0
$$
for $s\geqslant6$. Together with \eqref{eq:5.1}, one has $s<\rho(G)<2s-2$. Note that $K_{n-3}$ is a proper subgraph of $W_2$. By Lemma \ref{lem2.1}, we have $\rho(W_2)>n-4=2s-2$. Thus, $\rho(G)<2s-2<\rho(W_2)$, as desired.

Together with Cases 1, 2 and 3, we complete the proof of (c).
\end{proof}

\begin{claim}\label{claim5}
If $n\geqslant10$, then $\rho(F_3)>\rho(W_2)$.
\end{claim}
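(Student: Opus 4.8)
The plan is to prove the two–sided estimate $\rho(W_2)<n-3<\rho(F_3)$, which makes Claim~\ref{claim5} immediate since $n$ is not needed beyond $n\geqslant10$ for the bookkeeping to work.

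\textbf{Lower bound for $\rho(F_3)$.} The subgraph of $F_3=K_1\vee(K_2\cup K_{n-3})$ induced by the ``hub'' vertex together with $V(K_{n-3})$ is $K_1\vee K_{n-3}=K_{n-2}$, which is a \emph{proper} subgraph of the connected graph $F_3$. Hence Lemma~\ref{lem2.1} yields $\rho(F_3)>\rho(K_{n-2})=n-3$. (If a two–sided bound were ever wanted, $\rho(F_3)<n-2$ also follows by evaluating $\Phi(F_3,x)$ from \eqref{eq:5.2} at $x=n-2$, but only $\rho(F_3)>n-3$ is used.)

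\textbf{Upper bound for $\rho(W_2)$.} By Lemma~\ref{lem2.4} and \eqref{eq:5.4}, $\rho(W_2)$ is the largest root of the monic polynomial $\Phi(P_{W_2},x)=x^{5}-(n-6)x^{4}-(2n-2)x^{3}-(48-6n)x^{2}-(41-7n)x-4n+28$. So it suffices to show $\Phi(P_{W_2},x)>0$ for every $x\geqslant n-3$, since then no root can reach $n-3$, forcing $\rho(W_2)<n-3$. I would establish this by successively absorbing the two leading terms using $x\geqslant n-3$: first $x^{5}-(n-6)x^{4}=3x^{4}+x^{4}(x-n+3)\geqslant 3x^{4}$, and then $3x^{4}-(2n-2)x^{3}=(n-7)x^{3}+3x^{3}(x-n+3)\geqslant (n-7)x^{3}$; this reduces the task to checking
$$(n-7)x^{3}+(6n-48)x^{2}+(7n-41)x+(28-4n)>0\qquad\text{for }x\geqslant n-3 .$$
For $n\geqslant10$ the coefficients $n-7,\,6n-48,\,7n-41$ are all positive, so the first two summands are nonnegative for $x>0$, while, using $x\geqslant n-3\geqslant7$ and $7n-41>0$, the remaining part satisfies $(7n-41)x+(28-4n)\geqslant 7(7n-41)+28-4n=45n-259>0$. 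Hence $\Phi(P_{W_2},x)>0$ on $[n-3,+\infty)$, and therefore $\rho(W_2)<n-3$.

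Combining the two parts gives $\rho(W_2)<n-3<\rho(F_3)$, as required. The argument is entirely elementary; the only point needing a little care is that each ``peeling'' inequality must be valid on the whole half‑line $x\geqslant n-3$, not just at the endpoint, which is the case because at each step one replaces a product $x^{j}\cdot(\text{linear and increasing in }x)$ by $x^{j}$ times its minimum over $x\geqslant n-3$. An alternative fully in the style of Section~5 would be to divide $\Phi(P_{W_2},x)$ by $\Phi(F_3,x)$, evaluate the degree‑$\leqslant2$ remainder at $x=\rho(F_3)$, and bound the second eigenvalue of the symmetrized quotient matrix of $P_{W_2}$ via the Cauchy interlacing theorem (Lemma~\ref{lem2.5}); but the direct positivity estimate above is shorter and avoids interlacing altogether.
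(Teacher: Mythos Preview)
Your proof is correct and takes a genuinely different route from the paper's. The paper evaluates the difference $\rho^2\Phi(F_3,\rho)-\Phi(P_{W_2},\rho)$ at $\rho=\rho(W_2)$, obtains the quartic $h_1(x)=-3x^4+(2n-5)x^3+(37-3n)x^2+(41-7n)x+4n-28$, and then shows $h_1(\rho)<0$ by a two--step derivative analysis on $[n-4,+\infty)$ together with the lower bound $\rho(W_2)>n-4$ (from $K_{n-3}\subset W_2$); this gives $\Phi(F_3,\rho(W_2))<0$ and hence $\rho(W_2)<\rho(F_3)$. You instead sandwich both quantities around the single number $n-3$: the lower bound $\rho(F_3)>n-3$ is the one--line subgraph observation $K_{n-2}\subset F_3$, and the upper bound $\rho(W_2)<n-3$ follows from your ``peeling'' positivity argument on $\Phi(P_{W_2}$ for $x\geqslant n-3$, which requires no derivatives at all. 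Your approach is shorter, entirely elementary, and yields the extra quantitative information that $n-3$ separates the two spectral radii; the paper's approach, while heavier, is uniform with the polynomial--comparison style used throughout Section~5 (Claims~\ref{claim2}--\ref{claim4}). One small remark: in your final step you bound $(7n-41)x$ using only $x\geqslant 7$ rather than the tighter $x\geqslant n-3$; this is harmless since $7n-41>0$ makes the linear term increasing, but the parenthetical ``the only point needing a little care\ldots'' applies here too.
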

\begin{proof}[\bf Proof of Claim \ref{claim5}] For convenience, put $\rho(W_2)=\rho$. Next we shall show $\Phi(T_{F_3},\rho)<0$ to prove $\rho(F_3)>\rho(W_2)$.
By \eqref{eq:5.2} and \eqref{eq:5.4}, we have
$$
\rho^2\Phi(T_{F_3},\rho)-\Phi(P_{W_2},\rho)=-3\rho^4+(2n-5)\rho^3+(37-3n)\rho^2+(41-7n)\rho+4n-28.
$$
Let $h_1(x)=-3x^4+(2n-5)x^3+(37-3n)x^2+(41-7n)x+4n-28$ be a real function in $x$, where $x\in[n-4,+\infty)$. Next, we show that $h_1(x)$ is a monotonically decreasing function for $x\in[n-4,+\infty)$.

Consider the second derivative of $h_1(x)$ for $x\in [n-4,+\infty)$. Then we have
\begin{align*}
h_1''(x)=&-36x^2+6(2n-5)x-6n+74\\
        \leqslant&-36(n-4)^2+6(2n-5)(n-4)-6n+74\tag{As $\frac{6(2n-5)}{2\times36}<n-4$}\\
        =&-24n^2+204n-382<0
\end{align*}
for $n\geqslant10$. Therefore, $h_1'(x)$ is a decreasing function for $x\in[n-4,+\infty)$. Consequently, for $x\geqslant n-4$, we have
$h_1'(x)\leqslant f_1'(n-4)=-6n^3+75n^2-269n+273<0$ for $n\geqslant10$. Then $h_1(x)$ is a decreasing function for $x\in[n-4,+\infty)$. Recall that $\rho=\rho(W_2)<n-4$. Then  $h_1(\rho)<h_1(n-4)=-n^4+16n^3-78n^2+129n-48<0$ for $n\geqslant10$. Therefore, $\rho^2\Phi(T_{F_3},\rho)-\Phi(P_{W_2},\rho)<0.$ Note that $\Phi(P_{W_2},\rho)=0.$ Then $\Phi(T_{F_3},\rho)<0$ and so $\rho(F_3)>\rho(W_2)$ for $n\geqslant10$.
\end{proof}

Clearly, $F_1=W_1$ when $n=6$ and $F_2=W_1$ when $n=8$. By Claims \ref{claim3} and \ref{claim4}, we get contradictions for $n=6,8$. Note that $K_{s}\vee(K_2\cup K_{n-2s-1}\cup(s-1)K_1)=W_1$ when $s=\frac{n-2}{2}$.  By Claim \ref{claim3}, one has $\rho(W_1)<\rho(F_3)$ for $n\geqslant10$. Together with Claims \ref{claim3} and \ref{claim4}, we have $\rho(G)\leqslant\rho(F_3)$ for $n=10,12$, contradictions. By Claims \ref{claim3}, \ref{claim4} and \ref{claim5}, we get contradictions for $n\geqslant14$.

By Cases 1 and 2, our result holds.
\end{proof}
\section{\normalsize Concluding remarks}
In this paper, we consider some structure condition or spectral condition to guarantee the existence of some substructure in the graph. We obtain a sharp lower bound, respectively, on the size and the spectral radius of a graph $G$ to ensure that the graph $G$ is $k$-extendable. All the corresponding extremal graphs are identified fully. Furthermore, we establish a sharp lower bound, respectively, on the size and the spectral radius of a graph $G$ to guarantee that the graph $G$ is $1$-excludable. All the corresponding extremal graphs are characterized completely.

There are still some other interesting problems to be considered along the above line. For example, if we focus on the bipartite graph $G$, how to establish a sharp lower bound on the size (resp. spectral radius) of $G$ to ensure it is $k$-extendable, or $1$-excludable. Similarly, if we focus on regular graph, how to establish a sharp lower bound on the size (resp. spectral radius) of a regular graph to ensure it is $k$-extendable, or $1$-excludable.

We will do them in the near future.

\section*{\normalsize Declarations}
\noindent{\bf Authors' contributions}\vspace{3mm}

Both authors contributed equally to all aspects of the work.\vspace{4mm}

\noindent{\bf Data availability statement}\vspace{3mm}

Data sharing not applicable to this article as no datasets were generated or analysed during the current study.\vspace{4mm}

\noindent{\bf Conflict of interests statement}\vspace{3mm}

The authors declare that there is no conflict of interests regarding this publication.

\end{document}